\renewcommand{\mathbf}{\boldsymbol}
\renewcommand{\mathcal}{\mathscr}
\algnewcommand\algorithmicforeach{\textbf{for each}}
\newcommand{\beq}{\begin{equation}}
\newcommand{\eeq}{\end{equation}}
\newcommand{\beqarr}{\begin{eqnarray}}
\newcommand{\eeqarr}{\end{eqnarray}}
\theoremstyle{plain}
\newtheorem{thr}{Theorem}[section]
\newtheorem{lem}[thr]{Lemma}
\newtheorem{prop}[thr]{Proposition}
\theoremstyle{definition}
\newtheorem{defi}[thr]{Definition}
\newtheorem{ex}[thr]{Example}
\theoremstyle{remark}
\newtheorem{remk}[thr]{Remark}
\theoremstyle{remark}
\DeclareMathOperator{\re}{Re}
\title{Emergent hypernetworks in weakly coupled oscillators  
}
\author
{Eddie Nijholt$^{1}$, Jorge Luis  Ocampo-Espindola$^{2}$,  Deniz Eroglu$^{3}$, \\ Istv\'{a}n Z. Kiss$^{2}$,  Tiago Pereira$^{1,4\ast}$\\
\\
\normalsize{$^{1}$Instituto de Ci\^encias Matem\'aticas e Computa\c{c}\~ao, Universidade de S\~ao Paulo, S\~ao Carlos, Brazil}\\
\normalsize{$^{2}$Department of Chemistry, Saint Louis University, St. Louis, USA}\\
\normalsize{$^{3}$Faculty of Engineering and Natural Sciences, Kadir Has University, Istanbul, Turkey}\\
\normalsize{$^{4}$Department of Mathematics, Imperial College London, London, UK}\\
\\
\normalsize{$^\ast$To whom correspondence should be addressed; E-mail:  tiago.pereira@imperial.ac.uk}
}
\date{}
\begin{document}

\baselineskip24pt

\maketitle 


\begin{abstract}
{

Networks of weakly coupled oscillators had a profound impact on our understanding of complex systems.  Studies on model reconstruction from data have shown prevalent contributions from hypernetworks with triplet and higher interactions among oscillators, in spite that such models were originally  defined as oscillator networks with pairwise interactions. Here, we show that hypernetworks  can spontaneously emerge even in the presence of pairwise albeit nonlinear coupling given certain triplet frequency resonance conditions. The results are demonstrated in experiments with electrochemical oscillators and in simulations with integrate-and-fire neurons. By developing a comprehensive theory, we uncover the mechanism for emergent hypernetworks by identifying appearing and forbidden frequency resonant conditions. Furthermore, it is shown that microscopic linear (difference) coupling among units results in coupled mean fields, which have sufficient nonlinearity to facilitate hypernetworks. Our findings shed light on the apparent abundance of hypernetworks and provide a constructive way to predict and engineer their emergence.}
\end{abstract}

\vspace{10pt}
\begin{center}
\small Published in \href{doi.org/10.1038/s41467-022-32282-4}{\textit{Nature Communications} \textbf{13}, 4849} (2022)
\end{center}

\baselineskip18pt

\clearpage

\phantomsection

\section*{Introduction}
Networks of weakly coupled oscillators are prolific models for a variety of natural systems ranging from biology \cite{Watts2011,kralemann2013vivo} and chemistry \cite{sebek2016complex, bick2017robust} to neuroscience \cite{Nat2,Ermentrout:10} via ecology \cite{blasius1999complex} to engineering \cite{matheny2019exotic}. Such networks serve as stepping stones to understand collective dynamics \cite{Smeal:10,Omel:12,Hong:11,kuramoto2003chemical} and other  emergent phenomena in networks \cite{Stankovski_RMP_2017,rodrigues2016kuramoto}. In these models, the interactions are described in a pairwise manner and the collective dynamics of a network can be predicted by the superposition of such pairwise interactions. 

Recent work, however, suggests that many networks described as pairwise interactions can be better described in terms of hypernetworks with triplet and quadruplet interactions among nodes \cite{RalfNC, giusti2015clique,  reimann2017cliques, bassett2018nature}. In fact, hypernetworks appear as suitable representations of certain dynamical processes found in physics \cite{millan2020explosive,bick2016chaos}, chemistry \cite{kori2014clustering} and neuroscience \cite{giusti2016two,bassett2017network}. This has ignited research aimed at understanding the impact of higher-order interactions on the dynamical behavior of complex systems \cite{grilli2017higher, skardal2019abrupt,mulas2020coupled, bilal2014synchronization}. 
 Moreover, besides considering hypernetworks as a good description of such models, we observed that hypernetworks could be revealed in data-driven model reconstructions when the original model is a network. Therefore,  a major puzzle  is why hypernetworks emerge as the fitting description of actual network data.

Here, we show that hypernetworks can describe experimental data of networks of electrochemical oscillators with nonlinear coupling. We uncover a mechanism that generates higher-order interactions as a model to describe oscillator networks from data. First, we show that sparse model recovery from data reveals higher-order interactions. We then develop a theory for the emergence of such higher-order interactions when the isolated system is close to a Hopf bifurcation.  
We provide an algorithm to reveal emergent hypernetwork and its emergent coupling functions for any network in disciplines ranging from neuroscience to chemistry. {\color{black} The emergent hypernetworks provide a dimension reduction that allows the characterization of critical transitions. }

\section*{Results}

\subsection*{Emergent hypernetworks in electrochemical experiments} 

We designed an experimental system with four oscillatory chemical reactions coupled with nonlinear feedback and delay arranged in a ring network (see Fig.~\ref{Fig1}~(a)). The set-up consists of a multichannel potentiostat interfaced with a real-time  controller and connected to a Pt counter, a Hg/Hg$_2$SO$_4$ sat K$_2$SO$_4$ reference, and four Ni working electrodes in 3.0 M sulfuric acid electrolyte.  At a constant circuit potential ($V_0$=1100 mV with respect to the reference electrode) and with an external resistance ($R_{ind}$=1.0 kohm) attached to each nickel wire, the electrochemical dissolution of nickel exhibits periodic current and electrode potential oscillations with a natural frequency of 0.385 Hz. 

Without coupling, we adjusted the natural frequency of each oscillator to have a ratio with respect to oscillator 1 as 
$\omega_2/\omega_1$ = 2.53 ($\approx 2.5$), $\omega_3/\omega_1$ =1.56 ($\approx 1.5$)  and $\omega_4/\omega_1$ = 2.53 ($\approx 2.5$)  with a set of resistors and capacitors ($C_{ind}$), see {Supplementary Note 1}.) The natural frequencies create opportunities for triplet resonances, as there are small detunings for $\omega_1 - \omega_2 + \omega_3 $ and $\omega_1 - \omega_4 + \omega_3 $, {\color{black} as well as pairwise resonances $\omega_2 \approx \omega_4$. }

The individual electrode potentials ($E_k$) were recorded and rescaled and offset corrected 
\begin{equation}
\tilde{E}_k=O_k[E_k - o_k],
\end{equation}
where $o_k$ and $O_k$ are the time-averaged electrode potential and amplitude rescaling factor, respectively. (The rescaling factors, $O_k = {0.5 ,1 , 0.5 , 1 }$ V were applied to counter the  different amplitudes of the slow oscillators.) A ring-coupling can be introduced with  external feedback (see Fig.~\ref{Fig1}~(b,c)) according to
\begin{linenomath}
\begin{eqnarray}\label{exp_feed}
V_k(t) = V_{0, k} + K \sum_{\ell=1}^{4}A_{k \ell}h[\tilde{E}_k(t), \tilde{E}_{\ell}(t-\tau)],
\end{eqnarray} 
\end{linenomath}
where $V_k(t)$ and $V_{0, k}$ are the applied and the offset circuit potential of the  $k$th  electrode, respectively, $K$ is the coupling strength, $A_{k \ell}$ is the adjacency matrix, $\tau$ is a time delay, and
\begin{linenomath} 
\begin{eqnarray}\label{non_feed}
h[\tilde E_k(t), \tilde E_{\ell}(t-\tau)]=(\tilde E_k(t)+\tilde E_k(t)^2)\tilde E_{\ell}(t-\tau).
\end{eqnarray} 
\end{linenomath}
This delayed nonlinear feedback modulates the impact of the coupled units with a bias towards  positive values (similar to a diode operation  in the $(-1,1)$ interval).  Note that this form of feedback is fundamentally different from previously applied nonlinear schemes \cite{bick2017robust} in that it does not produce obvious synchronization patterns, for example, one and multi-cluster states.

Figure~\ref{Fig1}~(d) shows the time series of the electrode potential for $K$=5.2 and $\tau$ =1.65 s. The slow  oscillators (1 and 3) have larger amplitudes and the time series exhibit nonlinear waveform modulations  without any obvious synchronization pattern (one-cluster state).  
\begin{figure}[htp]
    \centering
    \includegraphics[width=1\columnwidth]{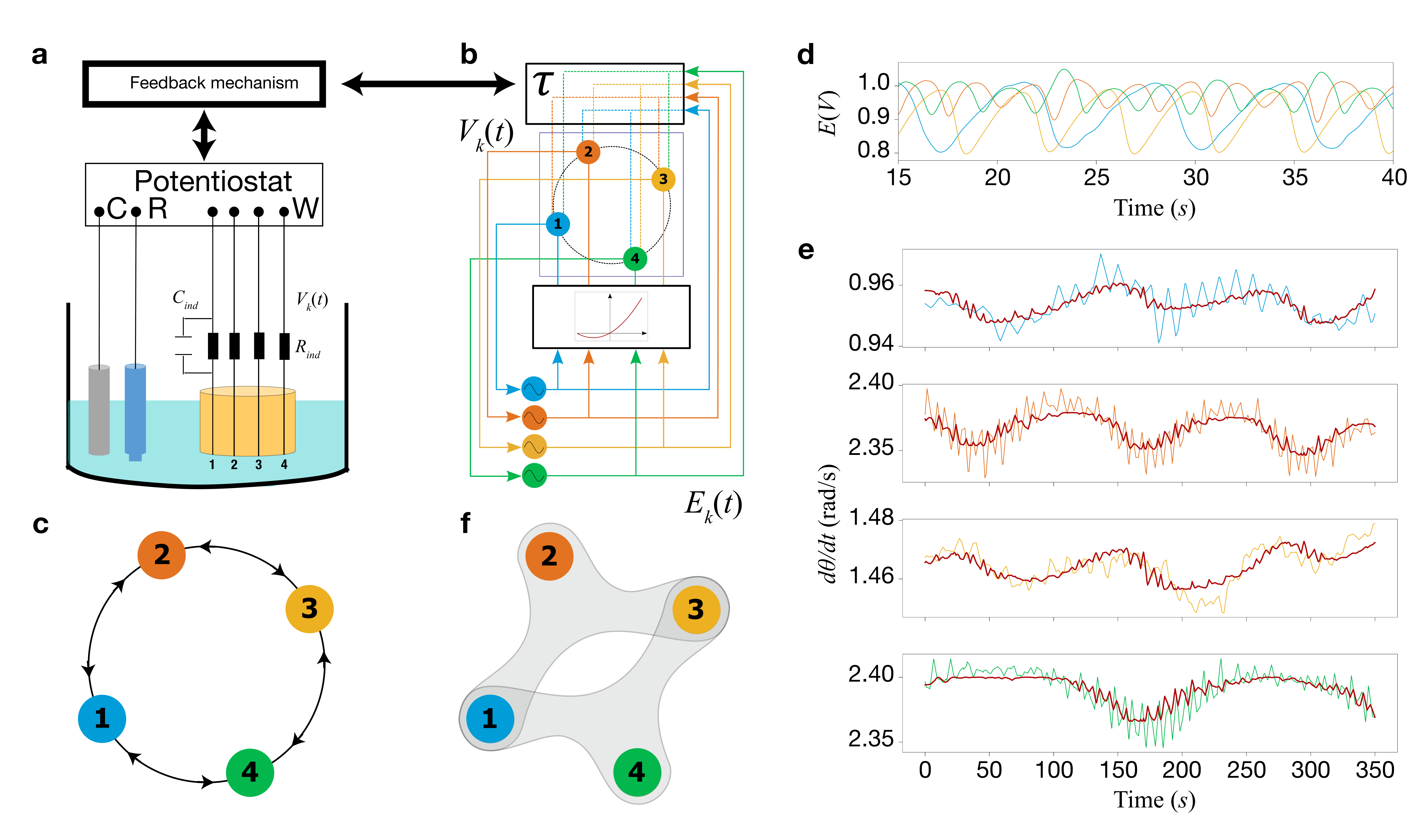}
    \caption{{\bf Emergent hypernetworks in an  electrochemical network experiment.} (a) Experimental setup. (b) Schematic illustration of the electrochemical experiment with the nonlinear feedback. The blue, orange, yellow, and green lines represent the elements 1 to 4, respectively. {\color{black}The electrode potential signals ($E_k$) of the four (nearly) isolated electrodes are nonlinearly modulated and fed back with a delay $\tau$ to the corresponding circuit potential ($V_k$), 
    which drives the metal dissolution. (The delay is implemented by storing the past data in the memory of 
    the computer.)  }  (c) Representation of the in a ring network topology used in the experiment.  (d) Electrode potential  time series.  (e) Filtered and fitted (dark red line) instantaneous frequency using LASSO for hypernetwork reconstruction corresponding from top to bottom to oscillators 1 to 4, respectively. (f) Experimental recovery of the phase interactions given by a hypernetwork.
}
\label{Fig1}
\end{figure}

From the potentials $\tilde E_k$ we  extract the  frequencies $\dot{\theta}_k$ and apply a first-order Savitzky-Golay filter with a time window of 45~s to remove the in-cycle  and short-range phase fluctuation, as shown in Figure~\ref{Fig1}~(e) (solid line). For each oscillator, a slow variation is seen as the oscillators slow down and speed up on a timescale of about 100 s (or 40 cycles); notably, the elements  1 and 3 exhibit similar  $\dot{\theta}_k$ oscillations, which are different from those in elements 2 and 4.
 
To describe the nature of the phase dynamics, we consider the slow triplet phase differences
\begin{linenomath}
\begin{eqnarray}
\begin{split}\label{slow_phases}
\phi_1 &= \theta_1  - \theta_2 + \theta_3 \\
\phi_2 &= \theta_1 - \theta_4 + \theta_3,
\end{split}
\end{eqnarray}
\end{linenomath}
which correspond to the triplet frequency detunings. 

The impact of triplet interactions on the dynamics can be extracted with a LASSO fit to 
\begin{linenomath}
\begin{equation}\label{eq:exp}
\dot{\theta}_k = \hat{\omega}_k(t) +   \sum_{j=1}^{2} C_j^k \sin (\phi_j) 
+ D_j^k \cos (\phi_j)
\end{equation}
\end{linenomath}
where $\hat{\omega}_k(t) = \hat{\omega}^0_k + \hat{\omega}^1_k t + \hat{\omega}^2_k t^2 $  is the fitted, slowly drifting (up to quadratic variation in time) natural frequency, and $C_j^k$ and $D_j^k$ are the amplitudes  of the sin and cos phase  coupling functions corresponding to the appropriate triplet phase differences. The strength of the triplet interactions $j=1,2$ (for $\phi_j$) on oscillator $k$ is given by the amplitudes $H_j^k = \sqrt{(C^k_j)^2  + (D^k_j)^2}$. 

The dynamics of oscillators 1 and 3 are  impacted by both triplet interactions; $\phi_1$ impacts oscillators 1 and 3 with amplitudes \num{4.9e-3} and \num{4.4e-3}, and $\phi_2$ with  \num{2.3e-3} and  \num{3.2e-3}, respectively. However, the dynamics of oscillators 2 and 4 are only impacted by triplet interactions $\phi_1$ (with amplitude \num{1.33e-2} ) and $\phi_2$ (\num{1.7e-2}), respectively.  These triplet interactions describe phase fluctuations  over the long time scale (red curves in Fig.~\ref{Fig1}~(e)). Therefore, we can conclude that the phase dynamics of the oscillators coupled in a ring can be described by a hypernetwork shown in Fig.~\ref{Fig1}~(f). 

{\color{black} The fact that model recovery provides triplets as the best description is rather puzzling. Also given that the resonant behavior $\omega_2 \approx \omega_4$ did not appear in the model recovery from data. This suggests an interplay between the resonant frequencies and the network topology. The question arises, which resonances/triplet interactions emerge from a large number of possibilities in a given network, natural frequencies, and nonlinear coupling? } An outstanding question is  what is the origin of these triplet interactions that were generated by pairwise physical coupling?

\subsection*{A theory for emergent higher-order interactions}

To answer these questions, we develop a theory that captures  the important characteristics  of the experiments: nonlinear coupling and triplet  resonance conditions. We consider the networks 
\begin{linenomath}
\begin{equation}\label{Eq1}
\dot{z}_k = f_k(z_k) + \alpha \sum_{\ell=1}^n A_{k\ell} h_k(z_k, z_{\ell})
\end{equation}
\end{linenomath}
where $z_k \in \mathbb{C}$ is the state of the $k$th oscillator, $h_k:  \mathbb{C} \times  \mathbb{C} \rightarrow  \mathbb{C}$  is the pairwise coupling function, $A_{k \ell}$ is the  adjacency matrix, and $\alpha > 0 $ is the coupling strength. When the  isolated system is close to a Hopf bifurcation, the dynamics is described by $f_k (z_k) = \gamma_k z_k + \beta_k z_k |z_k|^2$ \cite{shil2001methods}. The Hopf bifurcation is a common route to oscillations in nonlinear systems and describes  the appearance of oscillations in applications \cite{kralemann2013vivo,Nat2,Ermentrout:10,matheny2019exotic,sebek2016complex}.  Our proofs are valid  for $\gamma_k= \lambda+ i \omega_k$ with small  $\lambda$ and $\omega_k$ satisfying resonance conditions. We  fix $\beta_k=-1$, but this value is immaterial. We develop a normal form theory to  eliminate unnecessary terms of $h(z_k, z_{\ell})$ and to expose higher-order ones that   predict the dynamics.  To a network of the form of Eq. (\ref{Eq1})  we  associate non-resonance conditions that allow us to get rid of the leading interaction terms in $\alpha$.  

Since $h(z_k, z_{\ell})$ is a linear combination of monomials and the theory can be applied to each monomial independently, we assume first that $h(z_k, z_{\ell})$ is a single monomial of the form 
\begin{linenomath}
\begin{equation}\label{ResMono}
h(z_k, z_{\ell})  = z_k^{d_1}\bar{z}_k^{d_2}z_{\ell}^{d_3}\bar{z}_{\ell}^{d_4}
\end{equation}
\end{linenomath}
for non-negative numbers $d_1, \dots, d_4$. {\color{black} Our major theoretical result is a formulation}  of a non-resonance condition   given by
\begin{linenomath}
\begin{equation}\label{ResC}
(d_1 - d_2 -1)\omega_k + (d_3 - d_4)\omega_{\ell} \not= 0.
\end{equation}
\end{linenomath}
\textcolor{black}{This condition shows up naturally in our approach, as a monomial Equation \eqref{ResMono} can only be eliminated by a transformation that divides by the left-hand side of Equation \eqref{ResC}. Hence, an interaction term in the coupling function $h$ given by Equation \eqref{ResMono} can only be removed if the non-resonance condition is satisfied.}
The non-resonance condition is defined as the union over all non-resonance conditions of its monomial terms. The network non-resonance conditions are given by the union over all non-resonance conditions of $h(z_k, z_{\ell})$
for which $A_{{k \ell}} \not= 0$. Our result is the following:

{In Methods, we show that given Eq. (\ref{Eq1}) with  $h: \mathbb{C} \times \mathbb{C} \rightarrow \mathbb{C}$ a smooth map with vanishing constant terms, under the network non-resonance conditions, there is a coordinate transformation that eliminates pairwise interaction terms and reveals  the higher-order  interactions.} 
The proof consists of two main steps:

i) Existence of a polynomial change of variables.
 Consider
\begin{linenomath}
\begin{equation}
u_k = z_k - \alpha P_k
\end{equation}
\end{linenomath}
for some polynomials $P_k$. The goal is to design $P_k$  such that in the variables $u_k$ interaction terms linear in $\alpha$ vanish. We obtain higher-order interactions of order $\alpha^2$. For Eq. (\ref{Eq1}) we use
\begin{linenomath}
\begin{equation}\label{Trans}
P_k(z) = \sum_{\ell =1}^n A_{k \ell}  \tilde{h}_{k \ell} (z_k, z_{\ell})\, ,
\end{equation}
\end{linenomath}
where $\tilde{h}_{k \ell}(z, w)$ is the function obtained from $h(z, w)$ by  transforming each monomial  according to the following replacement rule:
\begin{linenomath}
\begin{equation}\label{Rr}
z^{d_1}\bar{z}^{d_2}w^{d_3}\bar{w}^{d_4} \mapsto\frac{z^{d_1}\bar{z}^{d_2}w^{d_3}\bar{w}^{d_4}}{(d_1-1)\gamma_k + d_2\bar{\gamma}_k + d_3\gamma_{\ell} + d_4\bar{\gamma}_{\ell} }\,
\end{equation}
\end{linenomath}

\textcolor{black}{Note that the imaginary part of the denominator in Equation \eqref{Rr} is precisely the left-hand side of Equation \eqref{ResC}. }While bringing the equations to the new form, we face a major challenge  to understand the combinatorial behavior of the Taylor coefficients during the transformation. We define a bracket on the space of polynomials to track these coefficients. 

ii) Dealing with transformed isolated dynamics. The  second major challenge lies in the fact that another coordinate transformation is needed to eliminate terms coming from the isolated dynamics $f_k$. Indeed, as we eliminate coupling terms linear in $\alpha$, other terms linear in $\alpha$ appear due to the isolated dynamics. A remarkable fact is that the same non-resonance conditions also ensure that the second transformation exists.

Our theorem is applicable to a much broader class of coupling functions and network formalisms than what is described by Eq. (\ref{Eq1}). A rich variety of new interaction rules can emerge, depending on the specifics of the set-up (see Supplementary Note 2). 

Applying the replacement rule Eq. (\ref{Rr}) we obtain  
\begin{linenomath}
\begin{align}\label{Eq1-transf-1}
\dot{u}_k = f_k(u_k) - \alpha^2 \left\{ \sum_{\ell = 1}^n  \sum_{p = 1}^n  \left[  A_{k \ell} A_{k p} \prescript{1}{}{G}_{k}^{\ell p}(u_k, u_{\ell}, u_p) 
-  A_{k \ell} A_{\ell p} \prescript{2}{}{G}_{k}^{\ell p}(u_k, u_{\ell}, u_p) \right]\right\},
\end{align}
\end{linenomath}
\noindent
up to higher-order terms in $\alpha$ and $u$. 
In Methods, we discuss the new coupling functions $\prescript{1}{}{G}_{k}$ and $\prescript{2}{}{G}_{k}$ some their properties. The coupling  is now $\alpha^2$ explaining anomalous synchronization transitions that appears in networks (see {Supplementary Note 3}). 

\subsection*{Emergent hypernetworks explain experimental data}

Similar to the experiments we consider a ring of four oscillators with coupling function
\begin{linenomath}
\begin{equation}\label{degree3}
h(z,w) = z \bar w + z^2 \bar w. 
\end{equation} 
\end{linenomath}
\noindent 
Instead of delay, the oscillators are coupled through a conjugate variable that enables a streamlined theoretical treatment. Close to a Hopf bifurcation, the delay would have an effect of advancing the oscillations over half a period. {\color{black}As before, we consider $\omega_1 - \omega_2 + \omega_3$ and $\omega_1 - \omega_4 + \omega_3$ to be close to zero, so, capturing the triplet resonance in the experiments.} We apply our theory to this case to unravel how higher-order interactions appear in the data. 

The coupling function is a combination of $z \bar w$ and $z^2 \bar w$, providing $d_1 = 1$ and $d_4 = 1$ for the first monomial and $d_1 = 2$ and $d_4  = 1$ for the latter. The resonance condition Eq. (\ref{ResC}) is satisfied for both. Using the replacement rule Eq. (\ref{Rr}), we find 
\begin{linenomath}
\begin{eqnarray}\label{Trans}
u_k=z_k+\alpha \left( \frac{z_{k-1}z_k}{\bar \gamma_{k-1}} + \frac{z_{k}z_{k+1}}{\bar \gamma_{k+1}} + \frac{z_{k-1}^2\bar{z}_k}{\gamma_{k-1} + \bar{\gamma}_{k}} 
+
\frac{z_k^2\bar{z}_{k+1}}{\gamma_k + \bar{\gamma}_{k+1}}
\right) 
\end{eqnarray}
\end{linenomath}

Each node equation contains  $16$ interaction terms as in Eq. (\ref{Eq1-transf-1}).  We discuss some of these terms for the first node. $^2 G^{2 3}_1$ appears as  node $1$ is connected to node $2$ and $2$ to $3$. This interaction is resonant, see Figure \ref{Fig2} (a).  $^2 G^{4 3}_1$ appears because node $1$ is connected to $4$ and node $4$ to $3$.  This term is also resonant, see Figure \ref{Fig2} (b). $^1 G^{2 4}_1$ is nonzero and nonresonant. This term appear as $1$ is  directed connected to $2$ and $4$, see Figure \ref{Fig2} (c). Finally,  the term $^2 G^{2 4}_1$ is a forbidden, the term would appear from an interaction of $1$ to $2$ and from $2$ to $4$, however, in the original network the later interaction is absent, see Figure \ref{Fig2} (d).  Remarkably, not all interactions are relevant when the goal is to describe slow oscillations in the phases.

\begin{figure}[htp]
    \centering
    \includegraphics[width=1\columnwidth]{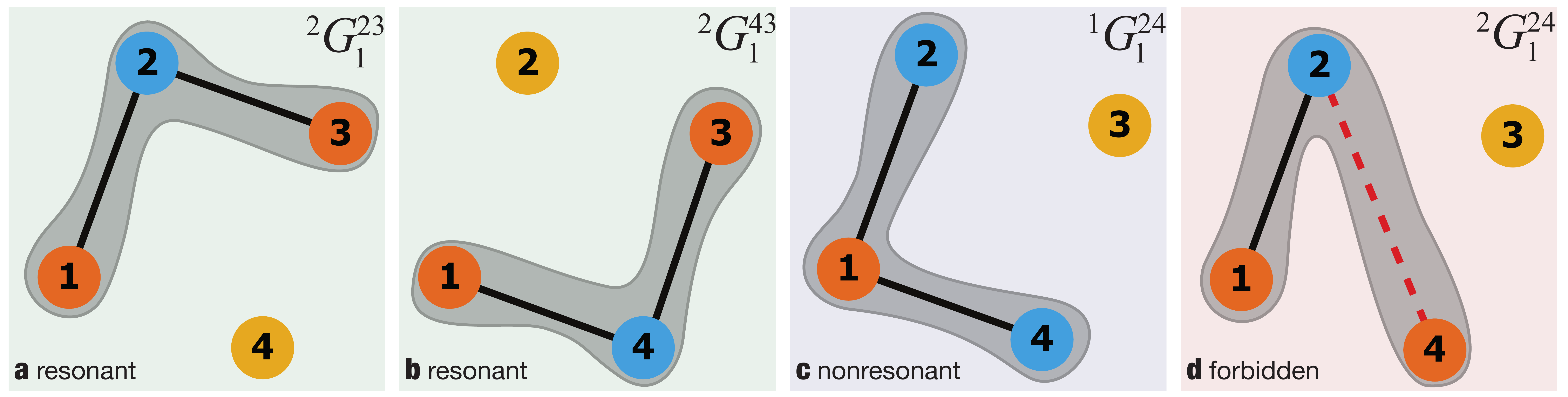}
    \caption{{\bf Emergent higher-order interactions from the original ring network}. 
Coupling functions appearing in Eq. (\ref{Eq1-transf-1}) of node $1$.  Colors correspond to signs in the phase combination with blue standing for positive and orange for negative. (a) resonant interaction term appearing as $^2G^{2 3}_1$.  (b) resonant interaction term appearing as $^2 G^{4 3}_1$. Finally, (c) is a nonresonant term and (d) $^2 G^{2 4}_1$ is a forbidden term (it does not appear). These new interaction terms can be predicted from the combinatorics of the original network and coupling function. }
    \label{Fig2}
\end{figure}

{\color{black} 
Indeed, once we analyse the phases in the new equations, the coupling term coming from $^2G_1^{23}$ will lead to oscillations with frequency close to $\omega_1 - \omega_2 + \omega_3$ while the term coming from $^2G_1^{43}$ leads to a frequency close to $\omega_1 - \omega_4 + \omega_3$. This implies that both terms are slowly varying. In contrast, the term coming from $^2G_1^{24}$ leads to oscillations with frequency $\omega_1 - \omega_2 + \omega_4 \approx \omega_1$ and is fast oscillating in comparison to the slow terms with small frequencies. In virtue of the averaging theory, such fast oscillating terms can be neglected. In fact, only resonant terms  connected by local trees in the original graph will survive such as the resonant ones involving $\omega_1 - \omega_2+ \omega_3$ and $\omega_1 - \omega_4+ \omega_3$.} This yields  
\begin{linenomath}
\begin{eqnarray}
\begin{split}
\label{hn}
\dot u_1 &= f_1(u_1)
- \alpha^2 \eta_{12} 
u_1^2 \bar u_2 u_3 - \alpha^2  \eta_{14}
u_1^2 \bar u_4 u_3 \label{eqr} \\
\dot u_2 &= f_2( u_2) 
- \alpha^2 \zeta_{231}
 u_2^2 \bar u_1 \bar u_3 \\
\dot u_3 &= f_3(u_3) 
- \alpha^2 \eta_{32}
u_3^2 \bar u_2 u_1 - \alpha^2 \eta_{34}
u_3^2 \bar u_4 u_1 \\
\dot u_4 &= f_4(u_4)
- \alpha^2 \zeta_{431}u_4^2 \bar u_1 \bar u_3 
\end{split}
\end{eqnarray}
\end{linenomath}
\noindent
where
$\eta_{pq} = \frac{1}{\gamma_p + \bar \gamma_q}$
and  $\zeta_{pqr} = \frac{2}{\gamma_p + \bar \gamma_q }
+\frac{2}{\gamma_p + \bar \gamma_r } 
+ \frac{1}{\bar \gamma_q } + \frac{1}{\bar \gamma_r }$.
Writing $u = re^{i\theta}$ we obtain equations for the phases $\theta$.  The averaging theorem  gives
\begin{linenomath}
\begin{eqnarray}\label{EqsPhaseRed}
\begin{split}
\dot \theta_{1} &= \omega_{1} - \alpha^2 r_0^3 \left[ \rho_{12}(\phi_1) + \rho_{14} (\phi_2) \right],  \\
\dot \theta_{2} &= \omega_{2} - \alpha^2 r_0^3    \sigma_{231}(\phi_1)  \\
\dot \theta_{3} &= \omega_{3} - \alpha^2 r_0^3 \left[ \rho_{32}(\phi_1) + \rho_{34} (\phi_2) \right],  \\
\dot \theta_{4} &= \omega_{4} - \alpha^2 r_0^3    \sigma_{431}(\phi_2), 
\end{split}
\end{eqnarray} 
\end{linenomath}
where the phases $\phi_1$ and $\phi_2$ are given in Eq. (\ref{slow_phases}). 
The functions $\rho$ and $\sigma$ are provided in the {Supplementary Note 4}. The emergent hypernetwork explains the experimental fitting found in Eq. (\ref{eq:exp}). These functions represent hyperlinks as shown in Figure \ref{Fig1} (f).

The phase triplets $\phi_1$ and  $\phi_2$ are  revealed from phase reduction in the normal form and  they are not obvious from the original Eq. (\ref{Eq1}). We confirm these  predictions by direct simulations of  Eq. (\ref{Eq1}) ({Supplementary Note 5}). We present examples for a three-node path in Supplementary Note 6 and a six-node network in Supplementary Note 7.

\subsection*{Predicting the slow phase interactions in experiments}

In {Supplementary Note 3}, we show that the experimental recovery of a hypernetwork is not an artifact. \textcolor{black}{Rather, we prove that imposing sparsity unavoidably leads to the recovery of the normal form instead. Indeed, as the recovery allows for a small least square deviation between the data and the model, the recovery finds the hypernetwork as a simpler description of the system. So,} by measuring the original variables and attempting a model recovery while imposing sparsity,  model recovery learns only the higher-order interactions. We now use the emergent network prediction for the ring network with the corresponding resonance conditions as in the experiment to explain the slow phase dynamics.

From the data we  extract the slow phases $\phi_1$ and $\phi_2$ as shown in Figure \ref{Fig3} in solid lines. Using our theory, from Eq. (\ref{EqsPhaseRed}), we obtain that
\begin{linenomath}
\begin{eqnarray}\label{Slow}
\dot{\phi}_i = \Omega_i + \sum_{j=1}^2 a_{ij} \cos \phi_j +  b_{ij} \sin \phi_j
\end{eqnarray}
\end{linenomath}
where  $a$'s and $b$'s are given in terms of the functions $\sigma$ and $\rho$ in Eq. (\ref{EqsPhaseRed}) see {Supplementary Note 5}.  We treat $a$'s  and $b$'s as fitting parameters from the vector field in Eq. (\ref{Slow}) obtained from first principles, since the corresponding coupling parameter and amplitudes are unknown. The resulting solutions agree  with the experimental data as seen in Figure \ref{Fig3}.  {\color{black} Our findings are not strictly limited to  electrochemical oscillators. As shown in  Supplementary Note 9, we detected the same hypernetworks in  nonlinearly coupled integrate-and-fire neuron models.}

\begin{figure}[h]
    \centering
    \includegraphics[width=0.7\columnwidth]{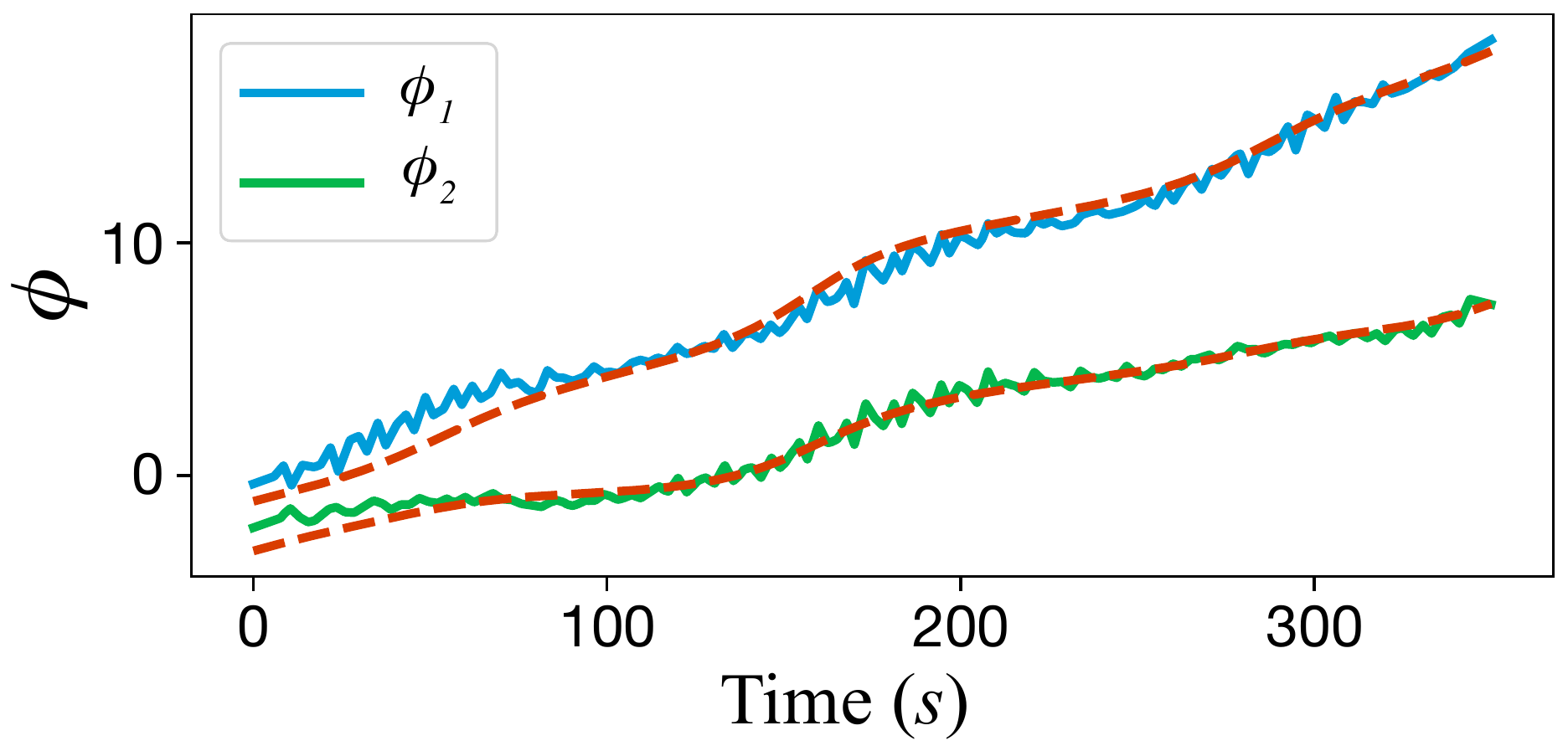}
    \caption{{\bf Normal form theory explains the experimental results}. We show the time series of the slow phase $\phi_1$  and $\phi_2$ from experimental data (solid) and the prediction of the emergent hypernetwork (dashed) capturing higher-order interactions. The vector field describing the phase interaction is obtained from first principles. The coefficients of the vector field are obtained by least-square minimization. 
}
    \label{Fig3}
\end{figure}

\subsection*{Emergent hypernetworks among network modules coupled through mean-fields}

{\color{black} The requirement of a nonlinear coupling, at first sight, seems to be a limitation for practical applications. However, here we analyze how  hypernetworks emerge in modular networks with microscopic pairwise coupling through phase differences.

We consider four subpopulations of $N$ interacting Kuramoto oscillators \cite{Stankovski_RMP_2017}. Nodes in each subpopulation interact strongly among themselves with coupling strength $\mu$ and weakly between subgroups with coupling strength $\alpha$, see Figure \ref{OAE}. As we will show at the macroscopic mean-field level, the interaction is nonlinear. According to our theory, although the mean-fields have a pairwise interaction, their model recovery will be in terms of hypernetworks. We first consider the microscopic description; each oscillator is described by 
\begin{linenomath}
\begin{equation}
\dot{\psi}_{km} = \omega_{km} + \frac{\mu}{N}\sum_{n=1}^N\sin(\psi_{kn} - \psi_{km}) +  \sum_{\ell=1}^4 A_{kl} \left( \frac{\alpha}{N} \sum_{n=1}^N\sin( \psi_{ln} - \psi_{km}) \right)
\end{equation}
\end{linenomath}
or in terms of mean-fields
$
\dot{\psi}_{km} = \omega_{km} + {\rm Im} \left( \mu  z_{k}  + \alpha  \sum_{} A_{kl} z_{l}  \right) e^{-i \psi_{km}}
$
where
\begin{linenomath}
\begin{equation}
z_k = \frac{1}{N}\sum_{m=1}^N e^{i \psi_{km}}
\end{equation}
\end{linenomath}
is the mean-field of the subpopulation $k$.  The frequencies $\omega_{km}$ are distributed according to a Lorenzian $\rho(\omega,\Omega_k,\sigma_k)$  where $\Omega_k$ is the mean subpopulation frequency and $\sigma_k$ is the frequency dispersion.  Applying the Ott-Antonsen ansatz \cite{RalfNC}, we obtain the macroscopic equations describing the mean-fields in the limit $N\rightarrow \infty$ as 
\begin{linenomath}
\begin{equation}
\dot{z}_k = f_k(z_k) + \sum_{l=1}^4 A_{kl} h(z_k,z_l) 
\end{equation}
\end{linenomath}
where $f_k$ is the Hopf normal form with constants $\gamma_k = (i \Omega_k +\mu - \sigma_k)$ and $\beta_k = -\mu$ and 
\begin{linenomath}
\begin{equation}
h(z_k,z_l) = \alpha z_l + \alpha \bar z_l z_k^2, 
\end{equation}
\end{linenomath}
thus, in the macroscopic description the coupling is nonlinear. 
We interpret $\alpha$ as a bifurcation parameter and deal with $\alpha z_l$ as a nonlinear term as in bifurcation theory. We consider the ensemble frequencies to satisfy the resonance conditions $\Omega_1 + \Omega_3 \approx 2\Omega_2$ and $\Omega_2 + \Omega_4 \approx 2\Omega_1$. At $\alpha=0$ each subpopulation will have an order parameter behaving as 
$
z_k(t) = r_k e^{i \theta_k(t)}
$
where
$
r_k = \sqrt{\frac{\mu- \sigma_k}{\mu}}
$
and $\dot \theta_k = \Omega_k$. To obtain the phase model, we bring the network to its normal form and apply the phase reduction.  In  Supplementary Note 10, we perform the calculations of such resonance conditions to obtain the new normal form equations.  After discarding  nonresonant terms the phase equations of the mean-fields read as

\begin{eqnarray}\label{thetaOA}
\begin{split}
\dot{\theta}_{1,3} &= \Omega_{1,3} +F_{1,3}(\varphi_1)\\ 
\dot{\theta}_{2,4} &= \Omega_{2,4} +F_{2,4}(\varphi_2)  
\end{split}
\end{eqnarray}
where $F_i$ is a linear combination of sine and cosine.

Next, we fix the ensemble frequencies as $\Omega_1 = 2, \Omega_2 = 3, \Omega_3 = 4$ and $\Omega_4 = 1$ as well as the coupling strengths $\mu=0.5$, $\sigma_k = 0.48$ yielding $r_k = 0.15$ and $\alpha=0.1$ for all subpopulations.  We numerically integrate the mean-field equations and obtain the complex fields $z_1(t),z_2(t),z_3(t)$ and $z_4(t)$ which enables us to extract the phase dynamics $\theta_1(t),\theta_2(t), \theta_3(t)$ and $\theta_4(t)$. Performing a Lasso regression we recover the vector fields of Eq. (\ref{thetaOA}) confirming the theoretical prediction of higher order interactions,  {see Supplementary Note 10}.

\begin{figure}[htp!]
    \centering
    \includegraphics[width=0.6\columnwidth]{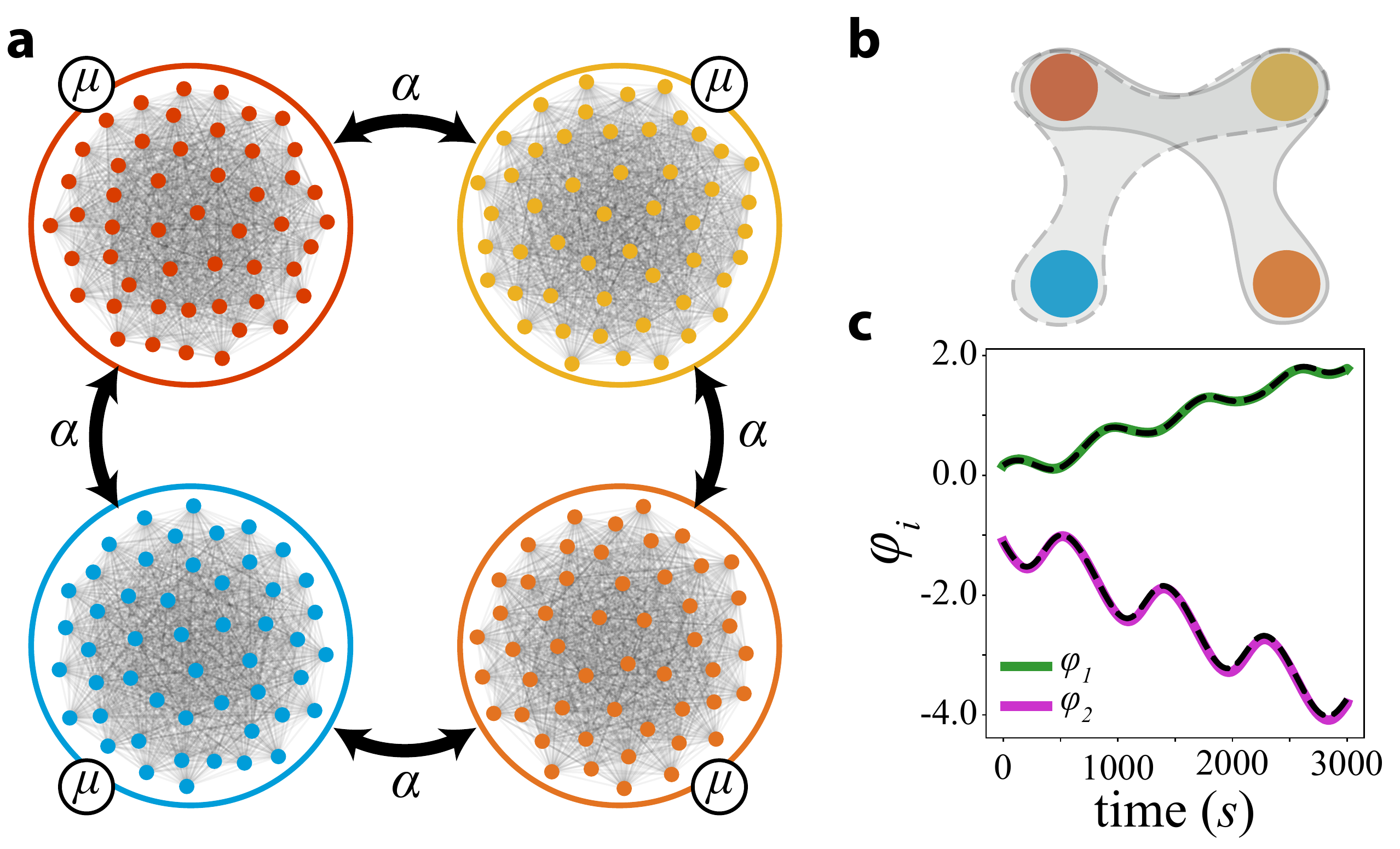}
    \caption{{\color{black}{\bf Interacting subpopulations lead to higher order interaction of mean-fields.}  a) The original network of coupled subpopulations (with four distinct colours, namely, red, yellow, blue and orange). Oscillators are interacting by an internal coupling constant $\mu$ and inter-subpopulations coupling constant $\alpha$. b) Higher order phase interaction of the  mean-fields represented with the same colors as in a) (red, yellow, blue and orange).  Applying our approach we uncover that the phase interaction between the mean-fields is described by a hypernetwork. c) The mean-field slow phase variables $\varphi_{1}$ (green) and $\varphi_{2}$ (purple) were computed from the data collected from the simulations of mean fields on the associated network. The  dashed curve is the simulation of the vector field of the slow phases $\varphi_{1,2}$ reconstructed from data using the Lasso method.}}
\label{OAE}
\end{figure}

As before, we introduce the slow phases
\begin{linenomath}
\begin{eqnarray}
\begin{split}
\varphi_1 = \theta_1 -2 \theta_2  +\theta_3,  \\
\varphi_2 = \theta_2 -2 \theta_1 + \theta_4.
\end{split}
\end{eqnarray}
\end{linenomath}
The theory predicts the higher order interaction between the slow phases as $\dot{\varphi}_k = \varepsilon_k + G_k(\varphi_1,\varphi_2)$, as shown in {Supplementary Note 10}. The fitting the predicted vector field of $\varphi$ to the data is excellent as can be observed in {Figure \ref{OAE} c).}

For these four subpopulation on a ring, the condition on the frequencies is close to the subspace 

$
V_{res} = \{ \Omega_1 + \Omega_3 = 2\Omega_2, \, \Omega_2 + \Omega_4 =  2\Omega_1\}\,, 
$
forming a co-dimension $2$ resonance surface.  That is, the emergence of hypernetworks is generic in a two parameter family of frequencies.
}

\section*{Discussion} 
We have uncovered a mechanism by which nonlinear pairwise interactions with triplet resonance conditions result in nontrivial phase dynamics on a hypernetwork. { \color{black} Such interactions traditionally were attributed in brain dynamics  to synaptic transmission between two neurons mediated by chemical messengers from a third neuron (heterosynaptic plasticity) \cite{Chistiakova:2014aa}. Our findings 
provide an alternative mechanism. On one hand, this finding shows that phase dynamics can be mediated through `virtual' interactions not physically present in the system. On the other hand, such  a mechanism could be leveraged to design interactions between remote components not directly connected but instead having correlations in natural frequencies. }

The experimental system with {a generic network motif with a ring of four} electrochemical oscillators presented here was an example, where a relatively simple nonlinear modulation of the  coupling induced a  hypernetwork driven phase dynamics.  {Networks with a ring topology are selected for the experiment since they are common for many network based complex systems, e.g., in lasers, biological systems, neuronal dynamics and many disciplines \cite{popovych2011delay, takamatsu2001spatiotemporal}}.  Such nonlinear modulation of the coupling  can be quite general in  gene expressions; for example, it was used to describe the coupling among circadian cells through  Michaelis-Menten mechanism where  coupling from one cell modulated the maximum gene expression rate in the other \cite{Schroder:2012bb}. 

{\color{black} Strikingly,  we showed that the coupling resulting in mean-field coupling  among network modules has sufficient nonlinearity to facilitate hypernetwork interactions. } {\color{black} In particular, event related modulation of spectral responses of  magnetoencephalogram (MEG) recordings  (i.e., modulation of frequency-specific oscillations in the motor network established by a handgrip task) have shown very strong evidence for nonlinear, between-frequency coupling  of remote brain regions\cite{friston2010}. Our results strongly suggest that in these MEG recordings, given the  appropriate  resonances and nonlinearities, hypernetwork description could facilitate the long-range modulation of frequencies.} In conclusion, the findings  open new avenues for hypernetwork based description and engineering of complex systems with heterogeneous frequencies and nonlinear interactions.

\section*{Methods}

Our results give an algorithmic procedure for obtaining a hypernetwork that accurately describes the observed behavior of the original system. This emergent higher order system depends on details of the given network, the original coupling function and the resonance relations among the phases. 
{\color{black}{
\subsection*{Normal form calculations}
In {Supplementary Note 2},  we consider ODEs of the general form
\begin{align}\label{theODE00methods}
\dot{z}_k &= \gamma_kz_k - \beta_k z_k|z_k|^2 + \alpha H_k(z_1, \dots, z_n)\, , \quad k \in \{1, \dots, n\}\, ,
\end{align}
with $z_k \in \mathbb{C}$ and $\alpha \in \mathbb{R}$. The numbers $\beta_k, \gamma_k \in \mathbb{C}$ are assumed non-zero, and we furthermore write $\gamma_k = \lambda + i\omega_k$. Here $\lambda \in \mathbb{R}$ is seen as the bifurcation parameter for a Hopf bifurcation, and we assume the interaction functions $H_k: \mathbb{C}^n \rightarrow \mathbb{C}$ to be smooth (i.e. $C^{\infty}$) for convenience. Moreover, we initially assume each $H_k$ satisfies $H_k(0) = 0$ and $DH_k(0) = 0$, though the condition on its  derivative is later dropped.

\noindent Our main result shows that the ODE \eqref{theODE00methods} can be put in a normal form that allows us to predict the phase dynamics of the oscillators. We do this by using two successive  transformations:
\begin{align}
  w_k &= z_k - \alpha P_k(z); \label{transforrr1}\\
  u_k &= w_k - \alpha Q_k(w), \label{transforrr2}\,
\end{align}
\noindent
for some appropriately chosen polynomials $P_k$ and $Q_k$. The first of these coordinate transformations is used to remove the term $\alpha H_k(z)$ from the Equation \eqref{theODE00methods}. This will generate additional terms in $\alpha^2$ that may be expressed in the coefficients of $H_k$ and $P_k$ following certain combinatorial rules. We manage this combinatorial behavior by introducing a special bracket $[\bullet|| \bullet]$ on the space of polynomials. In addition to these new interaction terms, the transformation will also produce terms in $\alpha$ involving $P_k$ and $\beta_k z_k|z_k|^2$, which obscure an interpretation of the system as a (hyper) network. We therefore remove these additional terms using the second coordinate transformation. A crucial observation here is that the non-resonance conditions needed for the first transformation are sufficient to ensure the second. We are able to prove this using the precise bookkeeping enabled by the aforementioned bracket. 

When dealing with the case where $DH_k(0) \not= 0$, we instead remove only the non-linear terms in $H_k$ using the transformations \eqref{transforrr1} and \eqref{transforrr2}. This reveals higher order terms as before. Even though $DH_k(0)$ accounts only for  nonresonant terms by assumption, this linear term will nevertheless cause an overall frequency shift that has to be accounted for. More precisely, if we denote by $\Omega$ the diagonal matrix with entries the frequencies $\omega_1, \dots, \omega_n$,  then the natural frequencies in the coupled case will be given by the imaginary part of the eigenvalues of $i\Omega + \alpha DH(0)$. Here we have set $H = (H_1, \dots, H_n)$. These new frequencies can be approximated by standard eigenvalue perturbation techniques. 
}}

\subsection*{Properties of the coupling functions $\prescript{1}{}{G}_{k}^{\ell p}$ and $\prescript{2}{}{G}_{k}^{\ell p}$}

Applying the transformation of the theorem to Eq. (\ref{Eq1}) yields a new system of the form Eq. (\ref{Eq1-transf-1}). In {Supplementary Note 2}, we show that 
\begin{linenomath}
\begin{align}\label{g1gen}\
\begin{split}
\prescript{1}{}{G}_{k}^{\ell p}(u_k, u_{\ell}, u_p) &= \frac{\partial \tilde{h}_{k \ell}(u_k, u_{\ell})}{\partial u_{k}} h(u_{k}, u_{p}) + \frac{\partial \tilde{h}_{k \ell}(u_k, u_{\ell})}{\partial \bar{u}_{k}} \overline{h(u_{k}, u_p)} \, \\ 
\prescript{2}{}{G}_{k}^{\ell p}(u_k, u_{\ell}, u_p) &= \frac{\partial \tilde{h}_{k \ell}(u_k, u_{\ell})}{\partial u_{\ell}} h(u_{\ell}, u_{p}) + \frac{\partial \tilde{h}_{k \ell}(u_k, u_{\ell})}{\partial \bar{u}_{\ell}} \overline{h(u_{\ell}, u_p)}\, .
\end{split}
\end{align}
\end{linenomath}

In Eq. (\ref{g1gen}) a term of degree $d$ in $h$ and a term of degree $\tilde{d}$ in $ \tilde{h}_{k \ell}$ combine to form a term of degree $d+\tilde{d} - 1$ in $\prescript{1}{}{G}_{k}^{\ell p}$. As both $h$ and  $\tilde{h}_{k \ell}$  have terms of degree 2 and higher,  we see that $\prescript{1}{}{G}_{k}^{\ell p}$ only has terms of degree 3 and higher. The same holds true for $\prescript{2}{}{G}_{k}^{\ell p}$, which means that a classical network description involving directed edges is  no longer possible. 

The third order terms are moreover easily found by replacing $h$ and  $\tilde{h}_{k \ell}$ in Eq. (\ref{g1gen}) by their quadratic terms. Likewise, the fourth order terms are found by replacing  $h$ by its quadratic terms and $\tilde{h}_{k \ell}$ by its cubic terms and vice versa in Eq.  (\ref{g1gen}). We may also argue that these higher order terms in $\prescript{1}{}{G}_{k}^{\ell p}$ and $\prescript{2}{}{G}_{k}^{\ell p}$ are non-vanishing in general. Indeed, the coefficients in front of these terms are rational functions of $\gamma_k$ and the coefficients of $h$. Such functions are either identical to the zero function (which Eq.  (\ref{g1gen}) excludes) or non-vanishing on an open dense set.

New terms emerge that have an interpretation as higher-order interactions. The two double sums in Eq. (\ref{Eq1-transf-1}) have a combinatorial interpretation. The first double sum counts all pairs of nodes $(\ell, p)$ that both influenced node $k$ in the original network. The second double sum counts all pairs $(\ell, p)$ where $\ell$ influenced $k$ and $p$ influenced $\ell$ and $p$ need not influence $k$ directly in the old network, so that new node-dependency is formed. 

\subsection*{An explicit algorithm for predicting the emergent hypernetwork}

We present an algorithm for obtaining an emergent hypernetwork from a given network system. Its input consists of the adjacency matrix $A$, the function $h$ and the phases $\omega_1$ through $\omega_n$, and we assume the nonresonance conditions of the theorem to hold.  The algorithm is as follows:
    \begin{algorithm}
    \caption{Emergent Hypernetworks}\label{euclid}
    \hspace*{\algorithmicindent} \textbf{Input}: Adjacency matrix $A$, coupling function $h$, frequencies and amplitudes $\gamma_i$'s \\
    \hspace*{\algorithmicindent} \textbf{Output}: Hypernetwork and Coupling functions
\begin{algorithmic}[1]
\ForEach {$k \in \mathcal S $}
\ForEach {$\ell \in \mathcal S $}
    \If {$A_{k\ell}\not= 0$} 
    \State form the polynomials $\tilde h_{k\ell}(u_k,u_\ell) $ by the replacement rule $$z^{d_1}\bar{z}^{d_2}w^{d_3}\bar{w}^{d_4} \mapsto\frac{z^{d_1}\bar{z}^{d_2}w^{d_3}\bar{w}^{d_4}}{(d_1-1)\gamma_k + d_2\bar{\gamma}_k + d_3\gamma_{\ell} + d_4\bar{\gamma}_{\ell} }\,$$
       
    \ForEach {$p \in \mathcal S $}
    \If {$A_{k\ell} A_{kp} \not= 0$} 
    \State Compute  $^1G_{k}^{\ell p}$
    \EndIf
    \If {$A_{k \ell} A_{\ell p} \not= 0$} 
    \State Compute  $^2 G_{k}^{\ell p}$
    \EndIf
\EndFor
 \EndIf
\EndFor
\Procedure{Resonant terms in the coupling functions $G$}{}
   \ForEach {$u_k^{d_1}\bar{u}_k^{d_2}u_{\ell}^{d_3}\bar{u}_{\ell}^{d_4}u_p^{d_5}\bar{u}_p^{d_6}$ monomial of $^1 G_{k}^{\ell p}$ and  $^2 G_{k}^{\ell p}$} 
     \If {$(d_1 - d_2 - 1)\omega_k + (d_3 - d_4)\omega_{\ell} + (d_5 - d_6)\omega_{p} \not=0$} 
     \State discard term
\EndIf
\EndFor
 \EndProcedure 
 
\Procedure{Remaining monomials are the couplings of node $k$}{}
 \EndProcedure 
\EndFor

\end{algorithmic}
\end{algorithm}

\section*{Data Availability}

We provide the experimental time-series and the extracted phases of the oscillations (Fig.~\ref{Fig1}) at \cite{nijholt_data}.

\section*{Code availability}

The source code for reconstructing the functions representing hypernetwork dynamics from oscillatory networks dynamics is available  \cite{nijholt_eddie_2021_5749164}.

\bibliographystyle{naturemag}

\bibliography{references}

\section*{Acknowledgments}
We thank  Sajjad Bakrani, Zachary G. Nicolaou, Marcel Novaes, Edmilson Roque, Robert Ronge and Jeroen Lamb for enlightening discussions. TP was  supported in part by FAPESP Cemeai Grant No. 2013/07375-0 and is a Newton Advanced Fellow of the Royal Society NAF$\backslash$R1$\backslash$180236. TP and EN 
were partially supported by Serrapilheira Institute (Grant No. Serra-1709-16124). DE was supported by TUBITAK Grant No. 118C236 and the BAGEP Award of the Science Academy. JLO-E acknowledges financial support from CONACYT. IZK acknowledges support from National Science Foundation (grant CHE-1900011).

\section*{Author Contributions Statement}
EN and TP designed the overall study and formulated the theory. JLO-E and IZK designed and performed the experiments. DE implemented the numerical simulations and analyses. All authors contributed to the writing of the manuscript. All authors reviewed and approved the final manuscript.

\section*{Competing Interests Statement}
The authors declare no competing interests.

\bigskip\noindent{\bf\large List of supplementary materials}

\noindent
Supplementary Text\\

\clearpage

\clearpage
\baselineskip18pt

\setcounter{page}{1}

\renewcommand{\theequation}{S\arabic{equation}}

\begin{center}
{\large \bf Supplementary Information to \\
Emergent hypernetworks in weakly coupled oscillators} \\
\vspace{1cm}
{ Eddie Nijholt$^{1}$, Jorge Luis  Ocampo-Espindola$^{2}$,  Deniz Eroglu$^{3}$, \\ Istv\'{a}n Z. Kiss$^{2}$,  Tiago Pereira$^{1,4\ast}$\\~\\

{

$^1$Instituto de Ci\^encias Matem\'aticas e Computa\c{c}\~ao, Universidade de S\~ao Paulo, S\~ao Carlos, Brazil \\
\vspace{0.2cm}
$^{2}$Department of Chemistry, Saint Louis University, St. Louis, USA} \\
\vspace{0.2cm}
$^3$ Faculty of Engineering and Natural Sciences, Kadir Has University, Istanbul, Turkey \\
\vspace{0.2cm}
$^4$ Department of Mathematics, Imperial College London, SW7 2AZ, London, United Kingdom}
\end{center}

\renewcommand\contentsname{Supplementary Notes}
\tableofcontents

\newpage

\noindent{\huge \bf Supplementary Note} 
\section{Experimental setup and methods}\label{Sec_exp}
In this section, we describe the details about the experimental setup, the dynamical behavior of the oscillators without coupling, the 
of the phase model using LASSO.

\subsection{Experimental setup} 
The experiments were carried out in a standard three-electrode electrochemical cell. 

The cell consists of a nickel-array working electrode (W), a Pt-coated Ti rod as a counter electrode (C), and a Hg/Hg$_2$SO$_4$ sat.~K$_2$SO$_4$ as a reference electrode (R). The electrolyte was a 3.0 M sulfuric acid solution at a constant temperature of 10 $^{\circ}$C. The electrode array consisted of four 1-mm diameter nickel wires embedded in epoxy with a spacing of 3 mm. A multichannel potentiostat (Gill-IK64, ACM Instruments) interfaced with a real-time LabVIEW controller measured the potential drop $[E_k(t)$, with 
respect to the reference electrode] and set the circuit potential ($V_{0, k}$) of the working electrodes individually at a rate of 200 Hz. The electrode potentials are corrected for offset $o_1$=0.92 V, $o_2$=0.98 V, $o_3$= 0.91 V, and $o_4$=0.97 V. 
 
\subsection{Behavior without coupling} The offset circuit potential to each oscillator was established 20 mV above the Hopf bifurcation ($V_{0, 1}$=1850 mV, $V_{0, 2}$=1100 mV, $V_{0, 3}$=1660 mV, $V_{0, 4}$=1103 mV). The natural frequencies [Supplementary Fig.~\ref{fig:exp_set}~(a)] were adjusted to have values of $\omega_1$=0.152 Hz, $\omega_2$=0.385 Hz, $\omega_3$=0.237 Hz and $\omega_4$=0.384 Hz with a set of resistors and capacitors $R_{ind, 1}$=12.0 kohm, $C_{ind}$=440 $\mu$F, $R_{ind, 2}$=1.00 kohm, $R_{ind, 3}$=12.0 kohm, $R_{ind, 4}$=1.00 kohm. Without coupling, we observed that the slow oscillators (1 and 3) have about twice the amplitude than the fast oscillators (2 and 4). Supplementary Fig.~\ref{fig:exp_set}~(c) shows the electrode potential time series of each oscillator.

\subsection{Phase dynamics}

\textbf{Phase definition.}We used the peak-finding approach \cite{pikovsky2003synchronization} to extract the phase of each oscillator and then linear interpolation between peaks from the experimental electrode potential time series. When there is no coupling, the pairwise phase difference shows a linear growth [Supplementary Fig..~\ref{fig:exp_set}~(b)] and the triplet phase differences, $\phi_j$, $j$=1, 2, do not show phase slip behavior [Supplementary Fig.~\ref{fig:exp_set}~(d)].  ~\\

\textbf{Fitting of phase dynamics} As described in the main text, the impact of triplet interactions on the dynamics can be extracted with 
a LASSO fit to the $\dot{\theta}_k$ values according to
\begin{equation}\label{eq:exp}
\dot{\theta}_k = \hat{\omega}_k(t) +   \sum_{j=1}^{2} C_j^k \sin (\phi_j) 
+ D_j^k \cos (\phi_j)
\end{equation}
where $\hat{\omega}_k(t) = \hat{\omega}^0_k + \hat{\omega}^1_k t + \hat{\omega}^2_k t^2 $  is the fitted, slowly drifting (up to quadratic variation in time) natural frequency, and $C_j^k$ and $D_j^k$ are the amplitudes  of the sin and cos phase  coupling functions corresponding to the appropriate triplet phase differences. 

For the fit, the instantaneous frequency, $\dot{\theta}_k$ was obtained with the numerical derivative of the phase of each oscillator from the experimental times series. The $\dot{\theta}_k$ was filtered by a first order Savitzky-Golay filter for 45~s. Using $\phi_j$, we fitted the $\dot{\theta}_k$ with LASSO method. In LASSO, the regularization parameter determines how many parameters in the fitted model should be set to zero. We used a regularization parameter so that the mean square error is 20\% higher than the best fit (no regularization). The fitted parameters are: 

~\\
\begin{center}
\begin{table}
\caption{Recovered coefficients from Eq.~\ref{eq:exp}.}
\begin{tabular}{ c|cccc}
\hline
 Coefficients & \multicolumn{4}{|c}{Oscillator number} \\
 & 1 & 2 & 3 & 4 \\
\hline
\hline
$\hat{\omega}_k^0$ & \,\,\,\,0.953 & \,\,\,\,2.368 & \,\,\,\,1.467 & \,\,\,\,2.383 \\
$\hat{\omega}_k^1$ & \,\,\,\,$2.76\times 10^{-5} $ & 0 & $-4.61\times 10^{-5}$ & 0 \\
$\hat{\omega}_k^2$ & $-6.40\times 10^{-8}$ & $-6.63\times 10^{-8}$ & $-1.32\times 10^{-7}$ & 0 \\
$C_1^k$ & \,\,\,\,$4.89\times 10^{-3}$ & \,\,\,\,$5.04\times 10^{-3}$ & $\,\,\,\,3.15\times 10^{-3}$ & 0 \\
$D_1^k$ & \,\,\,\,$9.27\times 10^{-5}$ & \,\,\,\,$1.23\times 10^{-2}$ & $-3.10\times 10^{-3}$ & 0 \\
$C_2^k$ & $-1.49\times 10^{-3}$ & 0 & \,\,\,\,$3.16\times 10^{-3}$ & \,\,\,\,$4.63\times 10^{-3}$  \\
$D_2^k$ & $-1.73\times 10^{-3}$ & 0 & $-7.37\times 10^{-4}$ & \,\,\,\,$1.64\times 10^{-2}$ \\
\hline
\end{tabular}
\end{table}
\end{center}

~\\

\begin{figure}[!ht]
    \centering
    \includegraphics[width=1\columnwidth]{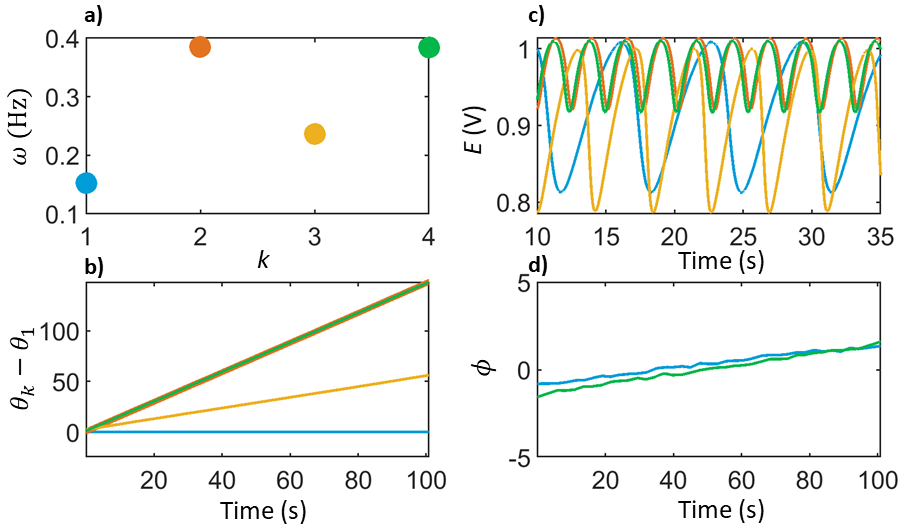}
    \caption{{\bf Dynamical behavior without coupling in the experiments.}  a) Natural frequency of each oscillators without coupling $\omega_1$=0.152 Hz, $\omega_2$=0.385 Hz, $\omega_3$=0.237 Hz and $\omega_4$=0.384 Hz.  The blue, orange, yellow and green dots represent the elements 1 to 4 respectively. b) Time series of the phase difference with respect oscillator one. The blue line: $\theta_1$-$\theta_1 = 0$, orange line:  $\theta_2$-$\theta_1$, yellow line:  $\theta_3$-$\theta_1$ and green line:  $\theta_4$-$\theta_1$. c) Electrode potential time series. Blue, orange, yellow, and green line corresponds to oscillator one to four respectively. d)  Time series of the slow phases, $\phi_1$ (blue) and $\phi_2$ (green) without coupling.}
    \label{fig:exp_set}
\end{figure}

\newpage

\section{Proof of emergent higher-order networks}
We consider ODEs of the general form
\begin{align}\label{theODE00}
\dot{z}_k &= \gamma_kz_k - \beta_k z_k|z_k|^2 + \alpha H_k(z_1, \dots, z_n) \, ,
\end{align}
for $k \in \{1, \dots, n\}$. Here, each $z_k$ takes values in $\mathbb{C}$ and $\alpha \in \mathbb{R}$ denotes the coupling parameter of the interaction.  We moreover have $\beta_k, \gamma_k \in \mathbb{C}$ non-zero, and write $\gamma_k = \lambda + i\omega_k$ for all $k \in \{1, \dots, n\}$. Note that $\lambda \in \mathbb{R}$ may be seen as the bifurcation parameter for a Hopf bifurcation, which might in particular vanish. Each interaction function $H_k: \mathbb{C}^n \rightarrow \mathbb{C}$ is assumed smooth (i.e. $C^{\infty}$) for convenience, and satisfies $H_k(0) = 0$ and $DH_k(0) = 0$.\\

We will show that the ODE \eqref{theODE00} can be put in a particular normal form that allows us to predict the dynamics of the phases of the oscillators. Our technique for doing so involves two successive coordinate transformations:
\begin{align}
  w_k &= z_k - \alpha P_k(z) \\ \nonumber
  u_k &= w_k - \alpha Q_k(w)\,
\end{align}

\noindent
for some appropriately chosen polynomials $P_k$ and $Q_k$. The first of these transformations is used to remove the term $\alpha H_k(z)$ from the ODE \eqref{theODE00}. This will generate additional terms in $\alpha^2$ that may be expressed in the coefficients of $H_k$ and $P_k$ following certain combinatorial rules. In order to describe this combinatorial behavior, we first introduce a useful bracket $[\bullet|| \bullet]$ on the space of polynomials, see Definition \ref{defibracket}. The first coordinate transformation will also produce terms in $\alpha$ involving $P_k$ and $\beta_k z_k|z_k|^2$. Again our bracket allows for a precise description of these new terms, which we then remove using the second coordinate transformation. The precise bookkeeping enabled by the bracket will be crucial in determining what non-resonance conditions are needed for the second transformation. In fact, it will turn out that the non-resonance conditions needed for the first transformation are sufficient to ensure the second.\\

We first present the main result, Theorem \ref{main001}, in Subsection \ref{Statementofresultsandpreliminaries}. There we also develop the necessary definitions, notation and machinery needed for the proof, which is then presented in the remaining subsections.

\subsection{Preliminaries and results}\label{Statementofresultsandpreliminaries}
In order to analyse the ODE \eqref{theODE00}, it will be useful to write 

\[
H_k(z) =  {H}^d_k(z) +  \mathcal{O}(|z|^{d+1})\, ,
\]
where ${H}^d_k(z)$ is a polynomial denoting the terms up to degree $d$ in the Taylor expansion of $H_k(z)$ around the origin. We will mostly work with the value $d=5$. Note that ${H}^d_k(z)$ is therefore a polynomial in both the variables $z_1, \dots, z_n$ and their complex conjugates $\overline{z}_1, \dots, \overline{z}_n$, with complex coefficients. In general, whenever we talk about a polynomial we will always mean a complex polynomial in its given complex variables and their complex conjugates. It will also be useful to write $H(z) = (H_k(z))$ for the vector valued function that captures all interaction functions $H_k(z)$ as its components, and similarly set $H^d(z) = (H^d_k(z))$.

 As is often the case with normal form calculations, we will need to assume some conditions on the $\omega_k$ (or more precisely the $\gamma_k$). These will depend on the coefficients of ${H}^5_k(z)$. To this end, we define:

\begin{defi}\label{nonreso}
Let
\begin{align}
    R(z) = c z_1^{s_1} \dots  z_n^{s_n} \overline{z}_1^{t_1} \dots \overline{z}_n^{t_n}\, 
\end{align}
be a monomial term in ${H}^d_k(z)$, where $c$ is a complex number and $s_1, \dots, s_n$, $t_1, \dots, t_n$ are non-negative integers. The $k$th \emph{non-resonance condition} of $R(z)$ is the condition

\begin{align}\label{non-resonance}
    s_1\omega_1 + \dots + s_n\omega_n - t_1\omega_1 - \dots - t_n\omega_n - \omega_k \not= 0\, .
\end{align}

Note that, as $\omega_{\ell}$ denotes the imaginary part of $\gamma_{\ell}$ for all $\ell \in \{1, \dots, n\}$, the $k$th non-resonance condition guarantees in particular that:

\begin{align}\label{non-resonance2}
    s_1\gamma_1 + \dots + s_n\gamma_n + t_1\overline{\gamma}_1 + \dots + t_n\overline{\gamma}_n - \gamma_k \not= 0\, ,
\end{align}

\noindent
which will play a role in much of the proofs and constructions below. In fact, varying $\lambda$ and allowing the particular case $\lambda = 0$, we see that equations \eqref{non-resonance} and \eqref{non-resonance2} are equivalent in general. Next, the $k$th \emph{non-resonance condition} of a polynomial is defined as the union of the $k$th non-resonance conditions of all of its monomial terms. 
Finally, the \emph{non-resonance condition} of a polynomial map ${H}^d(z) = ({H}^d_k(z))$ is the union over all $k \in \{1, \dots, n\}$ of the $k$th non-resonance conditions of ${H}^d_k(z)$.  \hfill $\triangle$
\end{defi}

\begin{ex}\label{example0easyinters1}
Suppose the interaction functions are given by the polynomials
\begin{equation}
    H_k(z) = \sum_{\ell=1}^n c_{k, \ell}z_k\overline{z}_\ell\, ,
\end{equation}
for some (possibly weighted) connection matrix $c = (c_{k,\ell}) \in\mathbb{C}^{n \times n}$. It follows that the $k$th non-resonance condition of $H_k(z)$ is given by
\begin{equation}
    \omega_k - {\omega}_{\ell} - \omega_k = -{\omega}_{\ell} \not= 0 \quad \text{ for all } {\ell} \text{ such that } c_{k, \ell} \not= 0 \, .
\end{equation}
Hence, we see that the non-resonance condition of $H(z) = (H_k(z))$ is satisfied if we simply have $\omega_{\ell} \not= 0$ for all nodes ${\ell} \in \{1, \dots, n\}$. \hfill $\triangle$
\end{ex}

\begin{ex}\label{example0easyinters12}
Suppose the interaction functions are given by the polynomials
\begin{equation}
    H_k(z) = \sum_{\ell=1}^n c_{k, \ell}(z_k\overline{z}_\ell + z_k^2\overline{z}_{\ell})\, ,
\end{equation}
for some connection matrix $c = (c_{k, \ell})$. The $k$th non-resonance condition of $H_k(z)$ is now given by
\begin{align}
    \omega_k - \omega_{\ell} - \omega_k &= -\omega_{\ell} \not= 0 \quad \text { and } \\ \nonumber
   2\omega_k -\omega_{\ell} - \omega_k &= \omega_k - \omega_\ell \not= 0 
\end{align}
for all $\ell$ such that $c_{k,l} \not= 0$. If we assume for convenience that $c$ encodes a symmetric, connected graph, then the non-resonance condition of $H(z)$ is satisfied if
\begin{align}
    \omega_{\ell} &\not= 0 \text{ for all nodes } \ell \in \{1, \dots, n\} \text{ and } \\ \nonumber
    \omega_{p} - \omega_q &\not= 0  \text{ for all edges } e = [p,q] \text{ between nodes } p \text{ and } q\, .
\end{align} \hfill $\triangle$
\end{ex}

\noindent We are now ready to formulate the main theorem. It tells us that, under the  relevant non-resonance conditions, we may transform the ODE \eqref{theODE00} into a system with leading interaction terms involving only three-way ``hyper-interactions'' and with coupling constant $\alpha^2$.  See Proposition \ref{newintterms} for an exact description of the new leading interaction terms in Theorem \ref{main001}.

\begin{thr}\label{main001}
Let ${H}^5_k$ denote the fifth order Taylor expansion of the $k$th interaction function $H_k: \mathbb{C}^n \rightarrow \mathbb{C}$. Assume the non-resonance conditions for ${H}^5 = ({H}^5_k)$ to hold. Then the ODE \eqref{theODE00} is locally conjugate to 
\begin{align}\label{transformedODE001}
\dot{u}_k &= \gamma_ku_k - \beta_k u_k|u_k|^2 - \alpha^2G_k(u) \\ \nonumber
&+ \mathcal{O}(|\alpha||u|^6 +|\alpha|^2|u|^5 + |\alpha|^3|u|^4 )\, ,
\end{align}
with $u_k \in \mathbb{C}$ and for some complex polynomials $G_k$ with only terms of degree $3$ and higher. See Proposition \ref{newintterms} for an exact description of the $G_k$.
\end{thr}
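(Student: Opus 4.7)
The plan is to execute the two-stage near-identity coordinate transformation outlined in the preamble of the theorem. First I would restrict attention to the Taylor expansion of $H_k$ up to degree $5$, so that every manipulation reduces to polynomial algebra, and install the bracket $[\bullet || \bullet]$ on the space of polynomials as a bookkeeping device that encodes the action of the linear vector field $z \mapsto (\gamma_k z_k)_k$ on monomials, together with a substitution rule compatible with the replacement formula \eqref{Rr}. This bracket is the combinatorial engine that tracks how coefficients propagate when polynomials are composed with near-identity maps.

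For the first change $w = z - \alpha P(z)$, I would define $P_k$ monomial by monomial from $H_k^5$ via the replacement rule, which is well-posed precisely because the $k$th non-resonance condition on $H^5$ keeps every denominator $s_1\gamma_1 + \cdots + s_n\gamma_n + t_1\bar{\gamma}_1 + \cdots + t_n\bar{\gamma}_n - \gamma_k$ away from zero (the stated condition on the $\omega_k$ is equivalent to this at $\lambda = 0$ and hence on a neighbourhood of it). Substituting $z = w + \alpha P(w) + \mathcal{O}(\alpha^2)$ into \eqref{theODE00} and expanding in powers of $\alpha$, the $\mathcal{O}(\alpha)$ contribution coming from the linear part cancels $\alpha H_k^5(w)$ by construction. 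What remains splits into (i) genuinely new $\mathcal{O}(\alpha^2)$ interactions produced by $DP \cdot H$, which become the emergent higher-order couplings $-\alpha^2 G_k$ whose explicit form is read off using the bracket, and (ii) unwanted $\mathcal{O}(\alpha)$ polynomials of degree $\geq 4$ coming from the Hopf cubic $-\beta_k z_k|z_k|^2$ reacting to the shift $\alpha P$, plus the $\mathcal{O}(|\alpha||w|^6)$ tail of $H_k - H_k^5$.

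The second change $u = w - \alpha Q(w)$ is then designed to eliminate class (ii). The main obstacle, and the real content of the argument, is to verify that no new resonances arise in defining $Q$: one must show that the non-resonance condition on $H^5$ alone is strong enough to invert the homological operator on these residual monomials. Using the bracket $[\bullet || \bullet]$, the residual monomials have multi-indices $(s',t')$ obtained from the multi-indices $(s,t)$ of the monomials of $P_k$ by a shift induced by multiplication by $z_k$ and $\bar z_k$ (coming from the structure of the Hopf cubic), and I would verify that the resulting denominator $s'\cdot\gamma + t'\cdot\bar\gamma - \gamma_k$ reduces algebraically to a non-zero quantity already covered by the $H^5$ non-resonance hypothesis. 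Once $Q$ is constructed, composing the two near-identity transformations produces the normal form \eqref{transformedODE001}, with $G_k$ given explicitly by the combinatorial formula coming from $DP \cdot H^5$ (matching the description announced in Proposition \ref{newintterms}), and the various truncation errors combine cleanly into the stated $\mathcal{O}(|\alpha||u|^6 + |\alpha|^2|u|^5 + |\alpha|^3|u|^4)$ remainder. Local conjugacy on a neighbourhood of the origin then follows from the inverse function theorem applied to the near-identity polynomial maps $w(z)$ and $u(w)$ for small $|z|$ and small $|\alpha|$.
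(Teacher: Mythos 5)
Your proposal is correct and follows essentially the same route as the paper: the first near-identity change $w=z-\alpha P(z)$ with $P_k=\widehat{H}^5_k$ defined by the replacement rule, the bracket $[\bullet||\bullet]$ to track coefficients, and a second change $u=w-\alpha Q(w)$ removing the residual order-$\alpha$ terms generated by the cubic Hopf term, with $G_k=[\widehat{H}^5_k||H^5]$. Your key verification step---that the residual monomials (of the form $|u_j|^2R$ and $u_k^2\overline{R}$ with $R$ a monomial of $P_k$) satisfy resonance denominators already controlled by the $H^5$ non-resonance hypothesis---is exactly the content of the paper's Lemmas \ref{lemnonreso1} and \ref{lemnonreso2}.
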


\noindent Note that Equation \eqref{transformedODE001} gives a precise description of $\dot{u}_k$ up to sixth order in $u$ and $\alpha$.

\begin{remk}
Theorem \ref{main001} tells us that, under the relevant non-resonance conditions, the $\alpha$-terms can be `pushed back' to order $|\alpha| |u|^6$. Technically speaking, this means we can ignore these remainder terms only when $\alpha u^6 << \alpha^2 u^4$, i.e. when $ u^2 << \alpha$. However, it will be clear from the proof in subsections \ref{thefirstreduction} and  \ref{thesecondreduction} that if the non-resonance conditions hold for terms in $H_k$ beyond ${H}_k^5$, then we may  arrange for a remainder in $\alpha$ with higher order terms in $u$. That is, we then get the remainder $\mathcal{O}(|\alpha||u|^N +|\alpha|^2|u|^5 + |\alpha|^3|u|^4 )$ for a corresponding value of $N > 6$. As the interaction functions $H_k$ are taken to be polynomials in our examples, with the non-resonance conditions holding for all terms, we in fact expect a remainder of the form $\mathcal{O}(|\alpha||u|^N +|\alpha|^2|u|^5 + |\alpha|^3|u|^4 )$ for arbitrarily high value of $N$. Hence, we may neglect all terms that are first order in $\alpha$, and obtain a new coupled system with coupling constant $\alpha^2$. \hfill $\triangle$
\end{remk}

 The new interaction functions $G_k$ can be obtained from $H$ by a combinatorial construction on the Taylor coefficients. This is best described using a bracket on polynomials that we define below. We will furthermore make extensive use of this bracket throughout the proof of Theorem \ref{main001}. 

\begin{defi}\label{defibracket}
Let $R(z)$ be a complex polynomial and let $S(z) = (S_1(z), \dots, S_n(z))$ be an $n$-tuple (i.e., a vector) of complex polynomials $S_1(z), \dots, S_n(z)$. We let $[R||S](z)$ be the complex polynomial obtained by (formally) assuming each variable $z_j$ is time-dependent (i.e., $z_j = z_j(t)$) and satisfies $\dot{z}_j =   S_j(z)$, after which we differentiate $R(z)$ with respect to $t$. That is, we set
\begin{align}
    [R||S](z) :=  \frac{d}{dt}R(z)\left|{\begin{array}{l}
  \dot{z}_j =   S_j(w) \\
   \forall \, j \in \\
   \{1, \dots, n\} 
\end{array}}\right.\, .
\end{align}
The reason we choose this notation, instead of one involving the Jacobian of $R$, is to avoid confusion with the complex conjugate variables $\overline{z}_j$. Because each term $\dot{\overline{z}}_j$ is substituted by    $\overline{S_j(w)}$, the expression $[R||S]$ is in general not complex linear in $S$. \\

 In the special case where $S(z) = (\gamma_1z_1, \dots, \gamma_nz_n)$ (with $\gamma_k$ as in Equation \eqref{theODE00}), we set
\begin{align}
    \Gamma R(z) := [R||S](z) = \frac{d}{dt}R(z)\left|{\begin{array}{l}
  \dot{z}_j =   \gamma_jz_j \\
   \forall \, j \in \\
   \{1, \dots, n\} 
\end{array}}\right.\, .
\end{align}
\hfill $\triangle$
\end{defi}

\begin{ex}\label{exgamma}
It is not hard to see that the polynomial $\Gamma R(z)$ is obtained by replacing every monomial 
\[c z_1^{s_1}\dots  z_n^{s_n} \overline{z}_1^{t_1}  \dots  \overline{z}_n^{t_n}\, ,\quad c \in \mathbb{C}\] 
in $R(z)$ by
\[c(s_1\gamma_1 +\dots + s_n\gamma_n + t_1\overline{\gamma}_1 + \dots + t_n\overline{\gamma}_n)z_1^{s_1}\dots  z_n^{s_n} \overline{z}_1^{t_1}  \dots  \overline{z}_n^{t_n}\, .\]
Consider for instance the monomial  $R(z) = R(z_1, z_2,\overline{z}_1, \overline{z}_2) = z_1^2\overline{z}_2$. We have
\begin{align}
\frac{d}{dt}R(z) = 2z_1\dot{z}_1\overline{z}_2 + z_1^2\dot{\overline{z}}_2  \, .
\end{align}
Hence, we indeed find
 \begin{align}
\Gamma R(z) = \frac{d}{dt}R(z)\left|{\begin{array}{l}
  \dot{z}_1 =  \gamma_1z_1 \\
   \dot{z}_2 =  \gamma_2z_2
\end{array}}\right. &= 2z_1(\gamma_1z_1)\overline{z}_2 + z_1^2(\overline{\gamma_2z_2}) \\ \nonumber
&= (2\gamma_1 + \overline{\gamma}_2)z_1^2\overline{z}_2 =(2\gamma_1 + \overline{\gamma}_2)R(z)\, .
\end{align}\hfill $\triangle$
\end{ex}

The term $G_k$ in Theorem \ref{main001} will be given as the bracket $[\bullet || \bullet]$ between ${H}^5$ and a polynomial obtained by slightly modifying ${H}^5_k$. Hence, intuitively,  $G_k$ should be thought of as $[H_k|| H]$. More precisely, we define:

\begin{defi}\label{augmentedd}
Let $P$ be a polynomial for which its $k$th non-resonance conditions are met. The ($k$th) \emph{modified polynomial} $\widehat{P}_{k}$ is obtained from $P$ by replacing each monomial 
\[c z_1^{s_1}\dots  z_n^{s_n} \overline{z}_1^{t_1}  \dots  \overline{z}_n^{t_n}\, ,\quad c \in \mathbb{C}\] 
in $P$ by
\[\frac{cz_1^{s_1}\dots  z_n^{s_n} \overline{z}_1^{t_1}  \dots  \overline{z}_n^{t_n}}{s_1\gamma_1 +\dots + s_n\gamma_n + t_1\overline{\gamma}_1 + \dots + t_n\overline{\gamma}_n - \gamma_k}\, .\]
In the special case where $P = {H}^d_k$ for some $d \leq 5$, we will simply write $\widehat{H}^d_{k} := \widehat{(H^d_k)}_{k}$ to denote the corresponding modified polynomial. \hfill $\triangle$
\end{defi}

\begin{prop}\label{newintterms}
In Theorem \ref{main001} the terms $G_k$ are given by
\begin{equation}
G_k = [\widehat{H}^3_{k}||{H}^3]\, .
\end{equation}
\end{prop}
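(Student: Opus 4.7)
The plan is to track the $\alpha^2$-contribution that survives the two successive coordinate changes used in the proof of Theorem \ref{main001}, and to show that at the relevant degrees only the degree-$2$ and $3$ parts of $H$ enter.

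First I would compute $\dot w_k$ for the change $w_k = z_k - \alpha P_k(z)$. By the chain rule and the $\mathbb{R}$-linearity of $[\cdot || \cdot]$ in its second slot, expanding $\dot z_j = \gamma_j z_j - \beta_j z_j|z_j|^2 + \alpha H_j(z)$ yields
\begin{align*}
\dot w_k = \gamma_k z_k - \beta_k z_k|z_k|^2 + \alpha H_k - \alpha \Gamma P_k + \alpha[P_k || \beta z|z|^2] - \alpha^2 [P_k || H].
\end{align*}
Setting $P_k = \widehat{H}^5_k$ solves the homological equation $\Gamma P_k - \gamma_k P_k = H^5_k$, as follows directly from Definition \ref{augmentedd} together with Example \ref{exgamma}, and is well-defined under the non-resonance hypothesis. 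Collecting $\gamma_k z_k - \alpha\gamma_k P_k = \gamma_k w_k$ and passing to $w$-coordinates via $z_k = w_k + \alpha P_k(w)$, I obtain
\begin{align*}
\dot w_k = \gamma_k w_k - \beta_k w_k|w_k|^2 - \alpha^2 [P_k || H](w) + R_\alpha(w) + \text{(remainder)},
\end{align*}
where $R_\alpha$ gathers all first-order-in-$\alpha$ contributions that remain (namely $\alpha(H_k - H^5_k)$, $\alpha[P_k || \beta z|z|^2]$, and the $\alpha$-part of $\beta_k z_k|z_k|^2$ produced by the substitution), each of which starts at degree $4$ in $w$.

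Next I would isolate the $\alpha^2$-part. Since $[\cdot || \cdot]$ differentiates its first argument once, a degree-$r$ monomial of $\widehat{H}^5_k$ paired with a degree-$s$ term of $H$ contributes a monomial of degree $r+s-1$ to $[\widehat{H}^5_k || H]$. As both arguments start at degree $2$, only terms of degrees $2$ and $3$ of each can contribute at total degree $\leq 4$; and the degree-$2$ and $3$ parts of $\widehat{H}^5_k$ coincide with those of $\widehat{H}^3_k$, while the degree-$2$ and $3$ parts of $H$ form $H^3$. Hence
\begin{align*}
[\widehat{H}^5_k || H](w) = [\widehat{H}^3_k || H^3](w) + \mathcal{O}(|w|^5),
\end{align*}
so the $\alpha^2$-contribution from the first transformation matches $-\alpha^2 [\widehat{H}^3_k || H^3]$ up to terms of the declared remainder order.

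Finally, the second transformation $u_k = w_k - \alpha Q_k(w)$ is chosen so that $Q_k$ solves a homological equation eliminating $R_\alpha$. Since every term of $R_\alpha$ has degree $\geq 4$ in $w$, one may take $Q_k$ of degree $\geq 4$, and the same non-resonance conditions that allowed the construction of $P_k$ ensure that of $Q_k$. The new $\alpha^2$-terms produced by this step, namely $-\alpha^2[Q_k || H]$ together with an analogous $\beta$-contribution, are of degree at least $4 + 2 - 1 = 5$ in $u$ and are therefore absorbed by $\mathcal{O}(|\alpha|^2|u|^5)$, without affecting $G_k$ at degrees $3$ or $4$. The main obstacle is the careful degree-versus-$\alpha$-order bookkeeping: the bracket $[\cdot || \cdot]$ is precisely what makes this tractable, by encoding the chain-rule combinatorics so that each cross-term is assigned unambiguously either to $G_k = [\widehat{H}^3_k || H^3]$ or to the declared remainder.
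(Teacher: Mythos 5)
Your strategy is essentially the paper's own: the two successive transformations $w_k = z_k - \alpha P_k(z)$ and $u_k = w_k - \alpha Q_k(w)$ with $P_k = \widehat{H}^5_k$ solving the homological equation of Lemma \ref{solvinfhomological}, the bracket $[\bullet||\bullet]$ to organize the chain-rule combinatorics, and the degree count of Lemma \ref{useful0b} to replace $[\widehat{H}^5_k||H]$ by $[\widehat{H}^3_k||H^3]$ modulo terms of degree $5$, which are absorbed in $\mathcal{O}(|\alpha|^2|u|^5)$. Your one real deviation is a mild streamlining: applying $\Gamma P_k - \gamma_k P_k = H^5_k$ while still in the $z$-variables, so that $\gamma_k z_k - \alpha\gamma_k P_k(z) = \gamma_k w_k$ holds exactly, lets you work with the first-order inverse $z_k = w_k + \alpha P_k(w) + \mathcal{O}(|\alpha|^2)$ and sidesteps the paper's cancellation of $\alpha^2\bigl[\gamma_kP_k + H^5_k - \Gamma P_k\,||\,P\bigr]$, which there requires the second-order inverse of Lemma \ref{fullinverse0}; that part is correct and slightly cleaner.

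The gap is in the second transformation. You assert that ``the same non-resonance conditions that allowed the construction of $P_k$ ensure that of $Q_k$'', but this is precisely the crux the paper must prove (Lemmas \ref{lemnonreso1} and \ref{lemnonreso2}): the modified-polynomial construction for $Q_k$ divides each monomial of $S_k = L^1_k - L^2_k$ by its own resonance combination, and there is no a priori reason these combinations are controlled by the hypotheses on $H^5$. The needed argument is structural: every monomial of $L^2_k = 2\beta_k|u_k|^2P_k + \beta_k u_k^2\overline{P_k}$ and of $L^1_k = [P_k||(\dots,\beta_j u_j|u_j|^2,\dots)]$ has the form $|u_j|^2R(u)$ or $u_k^2\overline{R(u)}$ with $R$ a monomial of $P_k$; multiplying by $|u_j|^2$ leaves the $k$th resonance sum unchanged, and passing to $u_k^2\overline{R}$ flips its sign, so non-resonance for $H^5_k$ transfers to $S_k$. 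Relatedly, your $R_\alpha$ also contains $\alpha(H_k - H^5_k)$, which $Q_k$ cannot and need not remove (no non-resonance is assumed for those terms); the homological equation for $Q_k$ must target only $L^1_k - L^2_k$, with that tail left inside $\mathcal{O}(|\alpha||u|^6)$, so ``eliminating $R_\alpha$'' under ``the same non-resonance conditions'' is not quite right as stated. With these two points repaired, your identification of $G_k$ — the new $\alpha^2$-terms of the second step having degree at least $5$, hence not touching the cubic and quartic part of $[\widehat{H}^3_k||H^3]$ — goes through exactly as in the paper.
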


It can be shown that $[\widehat{H}^3_{k}||{H}^3]$ indeed only has terms of degree $3$ and higher, using the assumption that each $H_{k}$ (and therefore each $\widehat{H}^d_{k}$) only has terms of degree 2 and higher. See Remark \ref{remkaboutdegrees} below. Note that we only care about the third and fourth order terms of $G_k$, as the rest are absorbed in the remainder terms of Equation \eqref{transformedODE001}. It will be clear from Remark \ref{remkaboutdegrees} that these lowest order terms do not change if we instead define
\begin{equation}
G_k = [\widehat{H}^d_{k}||{H}^d]\, ,
\end{equation}
for $d=4$ or $d=5$. For this reason we will often simply write
\begin{equation}
G_k = [\widehat{H}_{k}||{H}]\, . 
\end{equation}

\begin{remk}\label{I2}
Let $I_k \subset \{1, \dots, n\}$ denote the \emph{input set} of a node $k \in \{1, \dots, n\}$. That is, $I_k$ denotes those nodes that influence $k$, or more precisely those nodes $\ell$ for which $$\frac{\partial H_k(z)}{\partial z_{\ell}} \not= 0 \text{ or } \frac{\partial H_k(z)}{\partial \overline{z}_{\ell}} \not= 0 \, .$$ Note that $I_k$ might not contain $k$ itself. It follows that in general $G_k(u)$ depends on variables for nodes in the set
\begin{equation}
I^2_k := \left( \bigcup_{{\ell} \in I_k} I_{\ell}\right) \cup I_k\, . 
\end{equation}
This is because $G_k = [\widehat{H}^3_{k}||{H}^3]$ is formed by replacing a term $z_{\ell}$ (or $\overline{z}_{\ell}$) in $\widehat{H}^3_{k}$ by ${H}^3_{\ell}$ (or $\overline{{H}^3_{\ell}}$), and this is done for each $\ell \in I_k$. We have also used here that $\widehat{H}^3_{k}$ likewise only depends on the variables corresponding to nodes in $I_k$, or possibly a strict subset thereof. 

In a similar way one sees that the third order terms of $G_k$ (that is, its leading order terms) are given by `triplet terms' $\tilde{u}_r\tilde{u}_s\tilde{u}_t$, 
where we have $r \in I_k$ and $s,t \in I_{\ell}$ for some $\ell \in I_k$. (Here each $\tilde{u}_p$ may independently denote $u_p$ or its complex conjugate $\overline{u}_p$). See Figure \ref{fig1} for a schematic depiction of these emergent triplet terms.
Of course the specifics of $H_k(z)$ might put additional constraints on the terms that can show up in $G_k(u)$. \hfill $\triangle$
\end{remk}

\begin{figure}[h]
\centering
\begin{tikzpicture}

	\node[rectangle,draw=black, fill=blue, fill opacity = 0.1, rounded corners, minimum height=4.7cm, minimum width=7.4cm] (1) at (0,0.3) {};
	\node[rectangle,draw=black, fill=red, fill opacity = 0.1, rounded corners, minimum height=2cm, minimum width=3.2cm] (1) at (-1.5,0) {};
	\node[rectangle,draw=black, fill=yellow, fill opacity = 0.1, rounded corners, minimum height=2cm, minimum width=4cm] (1) at (1.1,-0.7) {};
	\fill[fill=blue!45](-2.2,0.5)--(2.4,-0.2)--(0.4,-1.2)--(-2.2,0.5);
	 \path[draw] (-2.2,0.5)--(2.4,-0.2)--(0.4,-1.2)--(-2.2,0.5);
	\node[circle,draw=black, fill=white, fill opacity = 1, inner sep=1.5pt, minimum size=14pt] (i) at (0.4,2.1) {k};
	\node[circle,draw=black, fill=white, fill opacity = 1, inner sep=1.5pt, minimum size=14pt] (k) at (-2.2,0.5) {r};
	\node[circle,draw=black, fill=white, fill opacity = 1, inner sep=1.5pt, minimum size=14pt] (j) at (-1.6,-0.5) {$\ell$};
	\node[circle,draw=black, fill=white, fill opacity = 1, inner sep=1.5pt, minimum size=14pt] (l) at (2.4,-0.2) {t};
	\node[circle,draw=black, fill=white, fill opacity = 1, inner sep=1.5pt, minimum size=14pt] (m) at (0.4,-1.2) {s};
	\node[] (Ii) at (-2.85,-0.73) {$I_k$};
	\node[] (Ij) at (-0.63,-1.49) {$I_{\ell}$};
	\draw [->,  >=stealth, thick, black, shorten <=2pt, shorten >=2pt] (k) to [bend right = -25] (i);
	\draw [->,  >=stealth, thick, black, shorten <=2pt, shorten >=2pt] (j) to [bend right = -25] (i);
	\draw [->,  >=stealth, thick, black, shorten <=2pt, shorten >=2pt] (l) to [bend right = -10] (j);
	\draw [->,  >=stealth, thick, black, shorten <=2pt, shorten >=2pt] (m) to [bend right = -10] (j);

\draw[fill=blue!45, rotate=-8.7, shift={(-3.6,-3.2)}] (2.8,3) arc (-90:0:0.7cm) -- (3.5, 4.5)--(3.2, 4.5)--(3.7, 5)--(4.2, 4.5)-- (3.9, 4.5) -- (3.9, 3.7) arc (180:270:0.7cm) ;
\end{tikzpicture}
\caption{Schematic depiction of the hidden `triplet terms' that show up in $G_k$.}
\label{fig1}
\end{figure}
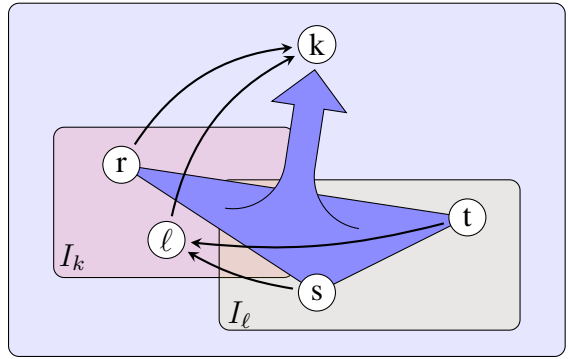

\begin{ex}\label{examzi11}
As in Example \ref{example0easyinters1}, let us make the particular choice for the interaction functions
\begin{equation}
H_k(z) = {H}_k^3(z) = \sum_{\ell = 1}^nc_{k,\ell}z_k\overline{z}_{\ell}\, .
\end{equation}
Here $c = (c_{k,\ell})$ is the connectivity matrix of the network. We will assume that $c_{k,\ell} \in \{0,1\}$, or more generally $c_{k,\ell} \in \mathbb{R}$, if the network is weighted.  We find
\begin{equation}
\widehat{H}_{k}(z) = \widehat{H}^3_{k}(z) =  \sum_{\ell = 1}^n\frac{c_{k,\ell}}{\overline{\gamma}_{\ell}}z_k\overline{z}_{\ell} \, .
\end{equation}
A direct calculation now shows that
\begin{align}
G_k(u) = [\widehat{H}_{k}|| H](u) &= \sum_{\ell= 1}^n\frac{c_{k,\ell}}{\overline{\gamma}_{\ell}}(H_k(u)\overline{u}_{\ell} + u_k\overline{H_\ell(u)}) \\ \nonumber
&= \sum_{\ell = 1}^n\frac{c_{k, \ell}}{\overline{\gamma}_{\ell}}\left(\sum_{p = 1}^n c_{k,p} u_k \overline{u}_p\overline{u}_{\ell} + u_k\sum_{p=1}^nc_{\ell,p}\overline{u}_{\ell}u_p\right) \\ \nonumber
&= \sum_{\ell = 1}^n \sum_{p = 1}^n\frac{c_{k,\ell}c_{k,p}}{\overline{\gamma}_{\ell}} u_k \overline{u}_{\ell}\overline{u}_p + \sum_{\ell = 1}^n \sum_{p = 1}^n\frac{c_{k,\ell}c_{\ell,p}}{\overline{\gamma}_\ell} u_k\overline{u}_{\ell}u_p \, .
\end{align}
The first of these two terms counts all trees in the network that are formed by the node $k$ and two nodes that influence node $k$. The second term counts all trees formed by the nodes $k$, $\ell$ and $p$, where $k$ depends on $\ell$ and $\ell$ depends on $p$.  \hfill $\triangle$
\end{ex}

We will gather some more facts about the bracket $[\bullet || \bullet]$. One important observation is given by:

\begin{lem}\label{useful0}
The map $(R,S) \mapsto [R||S]$ is complex linear in $R$ and real linear in $S$. In other words, given polynomials $R, R'$ and complex numbers $\lambda, \mu$, we have
\begin{equation}
    [\lambda R + \mu R'||S] = \lambda [R||S] + \mu [R'||S] \, .
\end{equation}
Given tuples $S, S'$ and real numbers $a,b$ we have
\begin{equation}
    [R||aS + bS'] = a [R||S] + b [R||S'] \, .
\end{equation}
\end{lem}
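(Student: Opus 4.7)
My plan is to unpack the definition of $[R||S]$ using the chain rule, producing an explicit formula from which both linearity statements can be read off directly. First I would treat $R(z)$ as a polynomial in the independent formal variables $z_1,\ldots,z_n$ and $\overline{z}_1,\ldots,\overline{z}_n$. Assuming each $z_j$ depends on $t$ with $\dot z_j = S_j(z)$ — and consequently $\dot{\overline{z}}_j = \overline{S_j(z)}$ — the chain rule yields
\begin{equation}
[R||S](z) \;=\; \sum_{j=1}^n \frac{\partial R}{\partial z_j}(z)\, S_j(z) \;+\; \sum_{j=1}^n \frac{\partial R}{\partial \overline{z}_j}(z)\, \overline{S_j(z)}.
\end{equation}
This formula is a routine consequence of Definition \ref{defibracket} and can be checked on a single monomial (as in Example \ref{exgamma}) and then extended by linearity of differentiation.

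Complex linearity in $R$ is then transparent from the displayed formula: each of the operators $\partial/\partial z_j$ and $\partial/\partial \overline{z}_j$ is $\mathbb{C}$-linear on polynomials, so $[\lambda R + \mu R'||S] = \lambda [R||S] + \mu [R'||S]$ for all $\lambda,\mu \in \mathbb{C}$. For the second identity I would use that complex conjugation is only $\mathbb{R}$-linear: if $a,b \in \mathbb{R}$ and $S,S'$ are tuples of polynomials, then $\overline{aS_j + bS'_j} = a\,\overline{S_j} + b\,\overline{S'_j}$, so substituting $aS + bS'$ into the explicit formula above and collecting terms gives $[R||aS + bS'] = a[R||S] + b[R||S']$.

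There is essentially no real obstacle here — this is a bookkeeping lemma whose purpose is to authorize the algebraic manipulations of the bracket used throughout the normal form computation. The one point requiring care is the convention, already built into Definition \ref{defibracket}, that $\overline{z}_j$ is treated as an independent formal variable whose time derivative is $\overline{\dot z_j} = \overline{S_j(z)}$; this is precisely what obstructs upgrading $\mathbb{R}$-linearity in $S$ to $\mathbb{C}$-linearity, since a complex scalar does not pull through the conjugation in the second sum. Once this point is made explicit, the proof reduces to reading off the two linearity properties from the displayed chain-rule expression.
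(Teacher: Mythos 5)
Your proposal is correct and follows essentially the same route as the paper: unpack Definition \ref{defibracket} into the explicit expansion in which each $S_j$ enters directly and each $\overline{S_j}$ enters through conjugation, then read off $\mathbb{C}$-linearity in $R$ and $\mathbb{R}$-linearity in $S$. The only cosmetic difference is that you state the chain-rule formula for a general polynomial $R$ via $\partial/\partial z_j$ and $\partial/\partial\overline{z}_j$, whereas the paper first invokes linearity in $R$ to reduce to a single monomial and writes the same expansion in the form $\sum_s \frac{R(z)}{z_{i_s}}S_{i_s}(z) + \sum_r \frac{R(z)}{\overline{z}_{i_r}}\overline{S_{i_r}(z)}$.
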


\begin{proof}
Complex linearity in $R$ is clear from the definition:
\begin{align}
    [R||S](z) :=  \frac{d}{dt}R(z)\left|{\begin{array}{l}
  \dot{z}_j =   S_j(w) \\
   \forall \, j \in \\
   \{1, \dots, n\} 
\end{array}}\right.\, .
\end{align}
Because of this, it suffices to show real linearity in $S$ when $R$ is given by a monomial of the form
\[R(z) = z_{i_1}z_{i_2}\dots z_{i_m}\overline{z}_{j_1}\overline{z}_{j_2}\dots\overline{z}_{j_l} ,  \]
for some (not necessarily distinct) $i_1, \dots, i_m, j_1, \dots, j_l \in \{1, \dots, n\}$. We get
\begin{align}\label{needlaterrron1}
    [R||S](z) = \sum_{s=1}^m\frac{R(z)}{z_{i_s}}S_{i_s}(z) + \sum_{r=1}^l\frac{R(z)}{\overline{z}_{i_r}}\overline{S_{i_r}(z)}\, ,
\end{align}
from which real linearity in $S$ follows readily. 
\end{proof}
We will also make extensive use of the following definition:

\begin{defi}\label{lowerdegree}
Given a complex monomial 
\[Q(z) = c z_1^{s_1}\dots  z_n^{s_n} \overline{z}_1^{t_1}  \dots  \overline{z}_n^{t_n}\, ,\] 
where $c \in \mathbb{C}$ and with $s_1, \dots, s_n, t_1, \dots, t_n$ non-negative integers, we define the degree of $Q(z)$ as the number $s_1 + \dots + s_n + t_1 + \dots + t_n$. The degree of a polynomial $P(z)$ is then defined as the maximum of the degrees of all the monomial terms of $P(z)$, as is common. Similarly, we define the \emph{lower degree} of a polynomial $P(z)$ as the minimum of the degrees of all of its monomial terms. \hfill $\triangle$
\end{defi}

It turns out our bracket has a predictable effect on degrees:
\begin{lem}\label{useful0b}
Let $R$ be a polynomial and $S= (S_1, \dots, S_n)$ a vector of polynomials. 
If $R$ has degree $p>0$ and each of the polynomial components of $S$ has degree at most $p'$, then $[R||S]$ has degree $p+p'-1$ or lower. If $R$ has lower degree $q>0$ and each of the polynomial components of $S$ has lower degree at least $q'$, then $[R||S]$ has lower degree $q+q'-1$ or higher.
\end{lem}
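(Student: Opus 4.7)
The plan is to reduce to the case where $R$ is a single monomial, using the complex linearity in the first argument established in Lemma \ref{useful0}, and then apply the explicit formula \eqref{needlaterrron1} from the proof of that lemma. Once this reduction is made, both bounds become elementary degree bookkeeping.

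First I would decompose $R = \sum_a R_a$ into its monomial terms, with each $R_a$ having degree $d_a$ satisfying $q \le d_a \le p$. By complex linearity we have $[R\|S] = \sum_a [R_a\|S]$, so it suffices to prove that $[R_a\|S]$ has degree at most $d_a + p' - 1$ and lower degree at least $d_a + q' - 1$ for each $a$. Since $d_a \le p$ the upper bound then sums to give degree at most $p + p' - 1$, and since $d_a \ge q$ the lower bound sums to give lower degree at least $q + q' - 1$ (possible cancellations can only increase the lower degree further).

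For a monomial $R_a(z) = c\, z_{i_1}\cdots z_{i_m}\overline{z}_{j_1}\cdots \overline{z}_{j_l}$ of degree $d_a = m + l$, formula \eqref{needlaterrron1} expresses $[R_a\|S]$ as a finite sum of terms of the form $(R_a(z)/z_{i_s})\,S_{i_s}(z)$ and $(R_a(z)/\overline{z}_{j_r})\,\overline{S_{j_r}(z)}$. Each prefactor $R_a(z)/z_{i_s}$ and $R_a(z)/\overline{z}_{j_r}$ is a monomial of degree exactly $d_a - 1$. Multiplying a monomial of degree $d_a - 1$ by a polynomial of degree at most $p'$ produces a polynomial of degree at most $d_a + p' - 1$; and multiplying the same monomial by a polynomial whose monomial terms all have degree at least $q'$ produces only terms of degree at least $d_a + q' - 1$. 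The conjugated summands behave identically, because complex conjugation of a polynomial preserves both its degree and its lower degree (it merely exchanges $z$'s and $\overline{z}$'s in each monomial, while conjugating the coefficient).

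No serious obstacle is anticipated; the argument is essentially a careful chain of degree counts. The hypotheses $p > 0$ and $q > 0$ are used only to guarantee that every monomial of $R$ has $m + l \ge 1$, so that the sum in \eqref{needlaterrron1} is nonempty and the subtraction by $1$ in the degree formulas is meaningful. The minor subtlety worth flagging in the write-up is the remark on conjugation preserving degree, which is what lets us treat the two sums in \eqref{needlaterrron1} uniformly.
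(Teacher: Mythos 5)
Your proposal is correct and follows essentially the same route as the paper's proof: reduce to a single monomial $R_a$ via the complex linearity of Lemma \ref{useful0}, apply the explicit formula \eqref{needlaterrron1}, and count degrees of the factors (the paper additionally decomposes the components of $S$ into monomials, but this is an immaterial variation). Your handling of the lower-degree bound, noting that cancellations in the sum can only raise the lower degree, and the remark that conjugation preserves degree are exactly the bookkeeping the paper's argument relies on.
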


\begin{proof}
By linearity of the bracket (see Lemma \ref{useful0}), it suffices to show that the degree of $[R||S]$ is $d+d'-1$ if $R$ is a monomial of degree $d$ and the components of $S$ are all monomials of degree $d'$. (By convention, we treat the zero-polynomial as a polynomial of any degree.) As in the proof of Lemma \ref{useful0}, we write
\[R(z) = z_{i_1}z_{i_2}\dots z_{i_m}\overline{z}_{j_1}\overline{z}_{j_2}\dots\overline{z}_{j_l} ,  \]
for some (not necessarily distinct) $i_1, \dots, i_m, j_1, \dots, j_l \in \{1, \dots, n\}$. It follows that $m+l = d$. As in the previous proof we find 
\begin{align}\label{needlaterrron12}
    [R||S](z) = \sum_{s=1}^m\frac{R(z)}{z_{i_s}}S_{i_s}(z) + \sum_{r=1}^l\frac{R(z)}{\overline{z}_{i_r}}\overline{S_{i_r}(z)}\, ,
\end{align}
from which it follows readily that the degree of $[R||S]$ is indeed $d+d'-1$. This completes the proof.
\end{proof}

\begin{remk}\label{remkaboutdegrees}
Lemmas \ref{useful0} and \ref{useful0b} give us an easy way of finding the third and fourth order terms of $G_k = [\widehat{H}_{k}||{H}]$ (and higher terms if necessary). It follows that the third order terms of $G_k$ are given by $[\widehat{H}^2_{k}||{H}^2]$, where we use that $H$ (and therefore each $\widehat{H}^2_{k}$) has no constant and linear terms. Likewise, the fourth order terms of $G_k$ are given by the bracket between the second order terms of $\widehat{H}_{k}$ and the third order terms of $H$, plus the bracket between the third order terms of $\widehat{H}_{k}$ and the second order terms of $H$. Note also that $G_k$ need only be defined up to third and fourth order in Theorem \ref{main001}, as higher order terms of $\alpha^2G_k(u)$ can be absorbed in the remainder $\mathcal{O}(|\alpha|^2|u|^5)$ of Equation \eqref{transformedODE001}. For this reason the terms of degree 4 and higher in $\widehat{H}_{k}$ and ${H}$ play no role in the construction of (the relevant terms of) $G_k = [\widehat{H}_{k}||{H}]$. \hfill $\triangle$
\end{remk}

Next, we consider $G_k$ in the situation where $H_k$ describes a coupled cell system as in the examples of the main text.

\begin{remk}\label{sumsoftermsinH}
Suppose each $H_k$ is of the special form 
\begin{equation}\label{simplelinkform}
H_k(z) = \sum_{\ell=1}^nc_{k,\ell} h(z_{k}, z_{\ell})\, ,
\end{equation}
where $z = (z_1, \dots, z_n)$. Here $(c_{k,\ell})$ is a real adjacency matrix and $h: \mathbb{C}^2 \rightarrow \mathbb{C}$ has vanishing constant and linear terms. It follows that we may write
\begin{equation}
\widehat{H}_k(z) = \sum_{\ell=1}^nc_{k,\ell} \widehat{h}_{k,\ell}(z_{k}, z_{\ell})\, ,
\end{equation}
where $\widehat{h}_{k,\ell}(z_k, z_{\ell})$ is obtained from $h(z_k, z_{\ell})$ by applying a monomial substitution to its terms in precisely the same way $\widehat{H}_k$ is obtained from $H_k$. By linearity of the bracket $[\bullet||\bullet]$ in the first slot, we get
\begin{align}\label{0of1G2G}
G_k = [\widehat{H}_k||{H}] = [\sum_{\ell=1}^nc_{k,\ell} \widehat{h}_{k,\ell}(u_{k}, u_{\ell})||H] = \sum_{\ell=1}^nc_{k,\ell} [\widehat{h}_{k,\ell}(u_{k}, u_{\ell})||H] \, .
\end{align}
Moreover, we find 
\begin{align}\label{1of1G2G}
 [\widehat{h}_{k,\ell}(u_{k}, u_{\ell})||H](u) &= \frac{\partial \widehat{h}_{k,\ell}(u_{k}, u_{\ell})}{\partial u_{k}}H_k + \frac{\partial \widehat{h}_{k,\ell}(u_{k}, u_{\ell})}{\partial \overline{u}_{k}}\overline{H}_k + \frac{\partial \widehat{h}_{k,\ell}(u_{k}, u_{\ell})}{\partial u_{\ell}}H_{\ell} + \frac{\partial \widehat{h}_{k,\ell}(u_{k}, u_{\ell})}{\partial \overline{u}_{\ell}}\overline{H}_{\ell} \\ \nonumber
 &= \frac{\partial \widehat{h}_{k,\ell}(u_{k}, u_{\ell})}{\partial u_{k}}\left(\sum_{p=1}^nc_{k,p} h(u_{k}, u_p)\right) + \frac{\partial \widehat{h}_{k,\ell}(u_{k}, u_{\ell})}{\partial \overline{u}_{k}}\left(\sum_{p=1}^nc_{k,p} \overline{h(u_{k}, u_p)}\right) \\ \nonumber 
 &+ \frac{\partial \widehat{h}_{k,\ell}(u_{k}, u_{\ell})}{\partial u_{\ell}}\left(\sum_{p=1}^nc_{\ell,p} h(u_{\ell}, u_p)\right) + \frac{\partial \widehat{h}_{k,\ell}(u_{k}, u_{\ell})}{\partial \overline{u}_{\ell}}\left(\sum_{p=1}^nc_{\ell,p} \overline{h(u_{\ell}, u_p)}\right) \\ \nonumber 
 &=\sum_{p=1}^nc_{k,p} \left( \frac{\partial \widehat{h}_{k,\ell}(u_{k}, u_{\ell})}{\partial u_{k}} h(u_{k}, u_p) + \frac{\partial \widehat{h}_{k,\ell}(u_{k}, u_{\ell})}{\partial \overline{u}_{k}} \overline{h(u_{k}, u_p)}\right) \\ \nonumber 
 &+ \sum_{p=1}^nc_{\ell,p}\left(\frac{\partial \widehat{h}_{k,\ell}(u_{k}, u_{\ell})}{\partial u_{\ell}} h(u_{\ell}, u_p) + \frac{\partial \widehat{h}_{k,\ell}(u_{k}, u_{\ell})}{\partial \overline{u}_{\ell}} \overline{h(u_{\ell}, u_p)}\right) \\ \nonumber 
 &= \sum_{p=1}^nc_{k,p} \prescript{1}{}{G}_{k}^{\ell p}(u_k, u_{\ell}, u_p)+  \sum_{p=1}^nc_{\ell,p}\prescript{2}{}{G}_{k}^{\ell p}(u_k, u_{\ell}, u_p)\, ,
\end{align}
where we have set 
\begin{align}\label{2of1G2G}
\prescript{1}{}{G}_{k}^{\ell p}(u_k, u_{\ell}, u_p) &:= \frac{\partial \widehat{h}_{k,\ell}(u_{k}, u_{\ell})}{\partial u_{k}} h(u_{k}, u_p) + \frac{\partial \widehat{h}_{k,\ell}(u_{k}, u_{\ell})}{\partial \overline{u}_{k}} \overline{h(u_{k}, u_p)} \quad \text{ and } \\ \label{3of1G2G}
\prescript{2}{}{G}_{k}^{\ell p}(u_k, u_{\ell}, u_p) &:= \frac{\partial \widehat{h}_{k,\ell}(u_{k}, u_{\ell})}{\partial u_{\ell}} h(u_{\ell}, u_p) + \frac{\partial \widehat{h}_{k,\ell}(u_{k}, u_{\ell})}{\partial \overline{u}_{\ell}} \overline{h(u_{\ell}, u_p)}\, .
\end{align}
Combining equations \eqref{0of1G2G} through \eqref{3of1G2G}, we obtain
\begin{align}\label{triangleform01}
G_k(u) = \sum_{\ell=1}^n \sum_{p=1}^n c_{k,\ell}  c_{k,p} \prescript{1}{}{G}_{k}^{\ell p}(u_k, u_{\ell}, u_p) + \sum_{\ell=1}^n   \sum_{p=1}^n c_{k,\ell} c_{\ell,p}\prescript{2}{}{G}_{k}^{\ell p}(u_k, u_{\ell}, u_p) \, .
\end{align}
We may interpret Equation \eqref{triangleform01} as representing a new interaction structure, one where the interaction is now encoded through certain trees in the graph instead of links. In this regard, the emergent interaction function \eqref{triangleform01} looks a lot like our original response function \eqref{simplelinkform}, but counting such trees instead of links. The only way in which Equation \eqref{triangleform01}  does not generalize  Equation \eqref{simplelinkform} perfectly is by the fact that $\prescript{1}{}{G}_{k}^{\ell p}$ and $\prescript{1}{}{G}_{k}^{\ell p}$ have indices $k,\ell$ and $p$ (whereas ${h}$ does not). However, we see from equations \eqref{2of1G2G}  and \eqref{3of1G2G} that there is no dependence on $p$; this index is only there for notational purposes. Moreover, the dependence on $k$ and $\ell$ is only through a rescaling of the monomials. Hence, we find an emergent interaction that is in very good agreement with a generalization of our original interaction to tree interaction. What is more, the trees that Equation \eqref{triangleform01} counts are easily identified in the original graph. See Example \ref{examzi11}, which describes a special case of interaction through \eqref{simplelinkform}, and the corresponding Figure 3 of the main manuscript. \hfill $\triangle$
\end{remk}

\begin{ex}\label{secondspecialcase}
We return to Example \ref{example0easyinters12}, where the interaction functions are given by
\begin{equation}
 H_k(z) = \sum_{\ell=1}^n c_{k, \ell}(z_k\overline{z}_\ell + z_k^2\overline{z}_{\ell})\, .
\end{equation}
This is of the form \eqref{simplelinkform} as discussed in Remark \ref{sumsoftermsinH}, with $h$ given by
\begin{equation}
h(z_k, z_{\ell}) =  (z_k + z_k^2)\overline{z}_\ell\, .
\end{equation}
Following the notation of Remark \ref{sumsoftermsinH}, we see that
\begin{equation}
\widehat{h}_{k, \ell}(z_k, z_{\ell}) = \frac{z_k\overline{z}_\ell}{\overline{\gamma}_{\ell}} + \frac{z_k^2\overline{z}_{\ell}}{\gamma_k + \overline{\gamma}_{\ell}}.
\end{equation} 
We therefore find 
\begin{align}\label{2of1G2Gal}
\prescript{1}{}{G}_{k}^{\ell p}(u_k, u_{\ell}, u_p) &= \left(\frac{\overline{z}_\ell}{\overline{\gamma}_{\ell}} + \frac{2z_k\overline{z}_{\ell}}{\gamma_k + \overline{\gamma}_{\ell}}\right)(z_k + z_k^2)\overline{z}_p  = \frac{(z_k + z_k^2)\overline{z}_\ell \overline{z}_p}{\overline{\gamma}_{\ell}} + \frac{2(z^2_k + z_k^3)\overline{z}_{\ell}\overline{z}_p}{\gamma_k + \overline{\gamma}_{\ell}}\, , \\ \nonumber
\prescript{2}{}{G}_{k}^{\ell p}(u_k, u_{\ell}, u_p) &= \left(\frac{z_k}{\overline{\gamma}_{\ell}} + \frac{z_k^2}{\gamma_k + \overline{\gamma}_{\ell}} \right)(\overline{z}_{\ell} + \overline{z}_{\ell}^2){z}_p =  \frac{z_k(\overline{z}_{\ell} + \overline{z}_{\ell}^2){z}_p}{\overline{\gamma}_{\ell}} + \frac{z_k^2(\overline{z}_{\ell} + \overline{z}_{\ell}^2){z}_p}{\gamma_k + \overline{\gamma}_{\ell}} \, .
\end{align}
As we may ignore terms of degree 5 and higher, we may also set
\begin{align}\label{2of1G2Gall}
\prescript{1}{}{G}_{k}^{\ell p}(u_k, u_{\ell}, u_p) &= \frac{(z_k + z_k^2)\overline{z}_\ell \overline{z}_p}{\overline{\gamma}_{\ell}} + \frac{2z^2_k\overline{z}_{\ell}\overline{z}_p}{\gamma_k + \overline{\gamma}_{\ell}}\, , \\ \nonumber
\prescript{2}{}{G}_{k}^{\ell p}(u_k, u_{\ell}, u_p) &=  \frac{z_k(\overline{z}_{\ell} + \overline{z}_{\ell}^2){z}_p}{\overline{\gamma}_{\ell}} + \frac{z_k^2\overline{z}_{\ell}{z}_p}{\gamma_k + \overline{\gamma}_{\ell}} \, ,
\end{align}
which describe the new interaction through Equation \eqref{triangleform01}. \hfill $\triangle$
\end{ex}

\subsection{A coordinate transformation}\label{thefirstreduction} 
In this subsection and the next ones we prove Theorem \ref{main001} and the accompanying Proposition \ref{newintterms}. Recall that we want to transform 
\begin{align}\label{theODE00-1}
\dot{z}_k &= \gamma_kz_k - \beta_k z_k|z_k|^2 + \alpha H_k(z) \, ,
\end{align}
into an ODE where the leading interaction terms are of order $\alpha^2$. Recall as well that $H_k^5(z)$ denotes the Taylor expansion of $H_k(z)$ up to fifth order. In particular, we may write 
\begin{align}\label{formofH1i}
    H_k(z) =  H^5_k(z) + \mathcal{O}(|z|^6)\, .
\end{align}

It follows that $H^5_k(z)$ is a complex polynomial of order $5$ in the variables $z_1, \dots, z_n$ and $\overline{z}_1, \dots, \overline{z}_n$. We write $z = (z_1, \dots, z_n)$, and similarly for other variables, and assume implicitly that any function of $z$ may also depend on its complex conjugate $\overline{z} = (\overline{z}_1, \dots, \overline{z}_n)$.

\noindent We start by rewriting the ODE \eqref{theODE00-1} using the transformation 
\begin{equation}\label{expresssss1}
w_k = z_k - \alpha \widehat{H}_k^5(z) =  z_k - \alpha P_k(z)   \, ,
\end{equation}
where we have set $P_k := \widehat{H}_k^5$ for convenience. Note that each $P_k: \mathbb{C}^n \rightarrow \mathbb{C}$ is a complex polynomial  of lower degree $2$ (see Definition \ref{lowerdegree}). It follows that Expression \eqref{expresssss1} describes an invertible transformation around $z = 0$. The following lemma deals with its inverse.

\begin{lem}\label{lemzinw0}
Suppose the variables $w = (w_1, \dots, w_n)$ may be expressed in $z = (z_1, \dots, z_n)$ and $\alpha$ by
\begin{equation}\label{expressss10}
w_k = z_k - \alpha P_k(z) \, ,
\end{equation}
for some polynomials $P_k$ of lower degree $d \geq 2$. Then $z$ can be expressed in $w$ and $\alpha$ by the formal expression
\begin{equation}\label{expressss20}
z_k = w_k+ \alpha P_k(w) + \alpha^2 R_{k,2}(w) + \alpha^3 R_{k,3}(w) + \dots \, .
\end{equation}
Here the $R_{k,t}(w)$ are polynomials with lower degree $(d-1)t+1$ or higher. 
\end{lem}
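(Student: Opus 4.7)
The plan is to prove the lemma by formal substitution in the ring of power series in $\alpha$ with polynomial coefficients. I would start with the ansatz
\begin{equation*}
z_k = w_k + \sum_{t \geq 1} \alpha^t R_{k,t}(w),
\end{equation*}
setting $R_{k,0}(w) := w_k$, and plug it into the defining relation $w_k = z_k - \alpha P_k(z)$. Matching coefficients of $\alpha^t$ gives the order-zero identity automatically and, for $t \geq 1$, the recursion
\begin{equation*}
R_{k,t}(w) = [\alpha^{t-1}]\, P_k\bigl(w + \textstyle\sum_{s \geq 1} \alpha^s R_s(w)\bigr),
\end{equation*}
where $R_s(w) = (R_{1,s}(w),\dots,R_{n,s}(w))$ and $[\alpha^{t-1}]$ extracts the $\alpha^{t-1}$-coefficient. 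Because the right-hand side involves only $R_{k',s}$ with $s \leq t-1$, this uniquely and inductively determines each $R_{k,t}$ as a polynomial in $w$ (and $\overline{w}$), proving existence of the formal expansion.

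Next I would turn to the lower-degree estimate. Expanding $P_k$ by its multivariate Taylor series around $w$, treating $z_j$ and $\overline{z}_j$ as independent formal variables, and applying a Fa\`{a} di Bruno-type identity, $R_{k,t}(w)$ becomes a finite sum of terms of the shape
\begin{equation*}
c_{\mu,\nu,\mathbf{s}}\, \partial^\mu_z\partial^\nu_{\overline{z}} P_k(w)\cdot R_{s_1}(w)\cdots R_{s_m}(w),
\end{equation*}
with $m = |\mu|+|\nu| \geq 1$, integers $s_i \geq 1$, and $s_1 + \dots + s_m = t-1$. The base case $R_{k,1}(w) = P_k(w)$ has lower degree $d = (d-1)\cdot 1 + 1$, so the induction hypothesis holds at $t=1$.

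For the inductive step, each factor $R_{s_i}$ has lower degree at least $(d-1)s_i + 1$ by hypothesis, while $\partial^\mu_z\partial^\nu_{\overline{z}} P_k$ has lower degree at least $d-m$ (or vanishes, in which case the term contributes nothing). Summing,
\begin{equation*}
(d-m) + \sum_{i=1}^m\bigl((d-1)s_i + 1\bigr) = d + (d-1)(t-1) = (d-1)t + 1,
\end{equation*}
which is the desired bound. The case $m > d$ is automatic, since then $m$ alone already exceeds $d$ and the remaining $R_{s_i}$ factors still contribute at least $(d-1)(t-1)+m$ to the lower degree.

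The only real obstacle I anticipate is the notational bookkeeping for the mixed $z$/$\overline{z}$ dependence of $P_k$, but treating $z$ and $\overline{z}$ as independent formal variables (which is consistent because $\alpha$ is real, so the conjugate of the ansatz gives a matching expansion of $\overline{z}_k$) reduces the combinatorics to the classical multivariate chain rule in a polynomial ring, and the degree counting above then goes through unchanged.
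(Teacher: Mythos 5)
Your proposal is correct and follows essentially the same route as the paper's proof: the same formal power-series ansatz in $\alpha$, the same coefficient-matching recursion $R_{k,t}(w) = [\alpha^{t-1}]P_k\bigl(w + \sum_{s\geq 1}\alpha^s R_s(w)\bigr)$, and induction on $t$ with the same degree arithmetic ending in $(d-1)t+1$. The only cosmetic difference is that you organize the $\alpha^{t-1}$ coefficient via a Taylor/Fa\`a di Bruno expansion of $P_k$ about $w$ (so the order-zero factors are absorbed into derivatives of $P_k$, of lower degree at least $d-m$), whereas the paper expands the monomials of $P_k$ directly into products of at least $d$ factors that may include the degree-one terms $w_i$; both bookkeepings yield the identical bound.
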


\begin{proof}
We write
\begin{equation}\label{expressssss200}
z_k = R_{k,0}(w) + \alpha R_{k,1}(w) + \alpha^2R_{k,2}(w)+\dots\, ,
\end{equation}
for some functions $R_{k,t}(w):\mathbb{C}^n \rightarrow \mathbb{C}$.  To determine these functions, we substitute the $z$ variables in Equation \eqref{expressss10} by Expression \eqref{expressssss200}. We obtain
\begin{align}\label{winzab0}
w_k &= z_k - \alpha P_k(z_1, \dots, z_n) \\ \nonumber
&=  [R_{k,0}(w) + \alpha R_{k,1}(w) + \dots]  - \alpha P_k([R_{1,0}(w) + \alpha R_{1,1}(w) + \dots],  \dots, [R_{n,0}(w) + \alpha R_{n,1}(w) + \dots])\, .
\end{align}
Comparing constant terms in $\alpha$ (i.e. $\alpha^0$), Expression \eqref{winzab0} gives us
\begin{equation}
w_k = R_{k,0}(w)\, .
\end{equation}
This simplifies Equation \eqref{winzab0} to 
\begin{align}\label{winzab20}
w_k &= [w_k + \alpha R_{k,1}(w) + \dots]  
- \alpha P_k([w_1 + \alpha R_{1,1}(w) + \dots], \dots, [w_n + \alpha R_{n,1}(w) + \dots])\, .
\end{align}
Comparing $\alpha$-terms now yields
\begin{equation}
0 = R_{k,1}(w) - P_k(w)\, ,
\end{equation}
so that
\begin{equation}
R_{k,1}(w) = P_k(w)\, .
\end{equation}
It remains to show that the higher order terms are indeed polynomials of the required lower degree. We will show this by induction on $t$. Note that $ R_{k,0}(w) = w_k$ has (lower) degree $(d-1)0 +1 = 1$. Likewise, $R_{k,1}(w) = P_k(w)$ is of  lower degree $(d-1)1 +1 = d$. We therefore fix an integer $T > 1$ and assume that the function $R_{k,t}(w)$ is a complex polynomial of lower degree $(d-1)t +1$ or higher for all $t<T$ and $k \in \{1, \dots, n\}$. The $\alpha^T$ terms in Equation \eqref{winzab20} are given by
\begin{align}\label{compaarq0}
0 &= R_{k,T}(w) - [\alpha^{T-1}]P_k([w_1 + \alpha R_{1,1}(w) + \dots], \dots, [w_n + \alpha R_{n,1}(w) + \dots])\,.
\end{align}
Here $[\alpha^{T-1}]F(\alpha)$ denotes the $\alpha^{T-1}$ term in the expansion of a function $F$ in $\alpha$. As $P_k$ is a polynomial of lower degree $d$, the $\alpha^{T-1}$ term in 
\[P_k([w_1 + \alpha R_{1,1}(w) + \dots], \dots, [w_n + \alpha R_{n,1}(w) + \dots]) \]
must be a finite sum of scalar multiples of expressions of the form
\[ \tilde{R}_{i_1,t_1}(w)\tilde{R}_{i_2,t_2}(w) \dots \tilde{R}_{i_s,t_s}(w) \, , \]
for $s \geq d$ and for some $i_1, \dots, i_s \in \{1, \dots, n\}$ and $t_1, \dots, t_s \in \mathbb{Z}_{\geq 0}$ satisfying $t_1 + \dots + t_s = T-1$. Each term $\tilde{R}_{i_j,t_j}(w)$ may furthermore independently denote ${R}_{i_j,t_j}(w)$ or its complex conjugate $\overline{R_{i_j,t_j}(w)}$. As we have $t_1 + \dots + t_s = T-1$, it in particular holds that $t_1, \dots, t_s \leq T-1$. By the induction hypothesis, we therefore know that each of the terms $\tilde{R}_{i_j,t_j}$ is a polynomial of lower degree $(d-1)t_j + 1$ or higher. This means the expression
\[ \tilde{R}_{i_1,t_1}(w)\tilde{R}_{i_2,t_2}(w) \dots \tilde{R}_{i_s,t_s}(w) \,  \]
is a polynomial of lower degree $D$ satisfying
\begin{align}
  D &\geq  [(d-1)t_1 + 1] + [(d-1)t_2 + 1] + \dots + [(d-1)t_s + 1] \nonumber \\ \nonumber
    &= (d-1)(t_1 + t_2 + \dots + t_s) + s \\ \nonumber
    &= (d-1)(T-1) + s \geq  (d-1)(T-1) + d   \\ \nonumber
    &=(d-1)T + 1 \, .
\end{align}
It follows from Equation \eqref{compaarq0} that $R_{k,T}(w)$ is indeed a polynomial of lower degree $(d-1)T + 1$ or higher for all $k \in \{1, \dots, n\}$. This proves the lemma by induction.
\end{proof}
\noindent Setting $d=2$, it follows that the inverse of Equation \eqref{expresssss1} is given by 
\begin{equation}\label{expresssss10}
z_k = w_k + \alpha P_k(w) + \mathcal{O}(|\alpha|^2|w|^3) \, .
\end{equation}
At some point later on, we will need to know Expression \eqref{expresssss10} up to higher order terms. To this end, we will show how our bracket $[\bullet|| \bullet]$ from Definition \ref{defibracket} shows up when performing coordinate transformations.

\begin{lem}\label{useful1}
Let $R(z)$ be a complex polynomial and suppose we may express the $z$-variables in some new $w$-variables by 
\begin{equation}\label{plugin234}
    z_k = w_k + \alpha S_k(w) + \mathcal{O}(|\alpha|^2) \, , \quad k \in \{1, \dots, n\} \, .
\end{equation}
Here each $S_k$ is a complex polynomial and we have $\alpha \in \mathbb{R}$. Then $R(z)$ is given in the $w$-variables by
\begin{equation}\label{transfff123}
R(z) = R(w) + \alpha[R||S](w) +  \mathcal{O}(|\alpha|^2) \, ,
\end{equation}
where we have set $S = (S_1, \dots, S_n)$.
\end{lem}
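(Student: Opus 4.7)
The plan is to reduce to the monomial case via linearity and then expand directly. By Lemma \ref{useful0} (or its proof), the assignment $R \mapsto [R||S]$ is complex-linear in $R$. Since substituting Expression \eqref{plugin234} into $R(z)$ is also a complex-linear operation in the coefficients of $R$, it suffices to verify Equation \eqref{transfff123} when $R$ is a single monomial of the form
\begin{equation}
R(z) = z_{i_1}z_{i_2}\cdots z_{i_m}\overline{z}_{j_1}\overline{z}_{j_2}\cdots\overline{z}_{j_l}\, ,
\end{equation}
with $i_1, \dots, i_m, j_1, \dots, j_l \in \{1, \dots, n\}$ not necessarily distinct.

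For such a monomial, I would substitute Expression \eqref{plugin234} into each factor. Because $\alpha$ is real, taking complex conjugates of \eqref{plugin234} gives $\overline{z}_k = \overline{w}_k + \alpha \overline{S_k(w)} + \mathcal{O}(|\alpha|^2)$. Plugging these expansions into the monomial and multiplying out, all terms of order $\alpha^0$ reassemble into $R(w)$, while the terms of order $\alpha^1$ come from choosing exactly one factor from which to draw the $\alpha$-contribution and keeping the $w$-leading part of every other factor. This yields
\begin{equation}
R(z) = R(w) + \alpha\left[ \sum_{s=1}^m \frac{R(w)}{w_{i_s}} S_{i_s}(w) + \sum_{r=1}^l \frac{R(w)}{\overline{w}_{j_r}} \overline{S_{j_r}(w)} \right] + \mathcal{O}(|\alpha|^2)\, .
\end{equation}
The bracketed expression is precisely the formula \eqref{needlaterrron1} (equivalently \eqref{needlaterrron12}) for $[R||S](w)$ that was derived in the proof of Lemma \ref{useful0}, which establishes Equation \eqref{transfff123} for monomials.

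Extending back to arbitrary polynomials $R$ by complex linearity in $R$ on both sides of Equation \eqref{transfff123} finishes the argument. The only genuine subtlety is to keep careful track of the complex-conjugate factors: since the expansion $z_k = w_k + \alpha S_k(w) + \mathcal{O}(|\alpha|^2)$ involves $w$ (and hence $\overline{w}$) inside $S_k$, a factor $\overline{z}_{j_r}$ contributes $\overline{S_{j_r}(w)}$ to the $\alpha^1$ term, which is exactly what the definition of $[R||S]$ encodes. Beyond this bookkeeping there is no real obstacle; the remainder $\mathcal{O}(|\alpha|^2)$ is automatic since each factor carries its own $\mathcal{O}(|\alpha|^2)$ tail and the monomial is a finite product.
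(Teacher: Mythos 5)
Your proof is correct and follows essentially the same route as the paper's: reduce to a single monomial by complex linearity in $R$, expand each factor using $z_k = w_k + \alpha S_k(w) + \mathcal{O}(|\alpha|^2)$ (and its conjugate, using that $\alpha$ is real), and recognize the resulting first-order term as the bracket formula \eqref{needlaterrron1} from the proof of Lemma \ref{useful0}. Your explicit handling of the conjugate factors $\overline{z}_{j_r} \mapsto \overline{S_{j_r}(w)}$ is exactly the bookkeeping the paper performs in its displayed computation.
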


\begin{proof}
We write $R(z)$ as 
\begin{equation}
    R(z) = T_0(w) + \alpha T_1(w) +  \mathcal{O}(|\alpha|^2) \, ,
\end{equation}
where $T_0(w)$ and $T_1(w)$ are to be determined. Assume first that $R(z)$ is given by 
\[R(z) = z_{i_1}z_{i_2}\dots z_{i_m}\overline{z}_{j_1}\overline{z}_{j_2}\dots\overline{z}_{j_l} ,  \]
for some (not necessarily distinct) $i_1, \dots, i_m, j_1, \dots, j_l \in \{1, \dots, n\}$. We get
\begin{align}
    R(z) &= z_{i_1}\dots z_{i_m}\overline{z}_{j_1}\dots\overline{z}_{j_l} \\ \nonumber
    &= (w_{i_1}+\alpha S_{i_1}(w))\dots (w_{i_m}+\alpha S_{i_m}(w)) \overline{(w_{j_1}+\alpha S_{j_1}(w))}\dots\overline{(w_{j_l}+\alpha S_{j_l}(w))} + \mathcal{O}(|\alpha|^2) \\ \nonumber
    &= w_{i_1}\dots w_{i_m}\overline{w}_{j_1}\dots\overline{w}_{j_l} + \alpha \left( \sum_{s=1}^m\frac{R(w)}{w_{i_s}}S_{i_s}(w) + \sum_{r=1}^l\frac{R(w)}{\overline{w}_{i_r}}\overline{S_{i_r}(w)} \right) + \mathcal{O}(|\alpha|^2) \\ \nonumber
    &= R(w) + \alpha [R||S](w) + \mathcal{O}(|\alpha|^2)\, ,
\end{align}
where in the last line we have used Expression \eqref{needlaterrron1} from the proof of Lemma \ref{useful0}. As $T_0$ and $T_1$ are determined linearly by $R$, we may conclude from Lemma \ref{useful0} that $T_0(w) = R(w)$ and $T_1(w) = [R||S](w)$ for general polynomials $R$. This completes the proof.
\end{proof}

\begin{ex}
Suppose we are given the polynomial $Q(z) = Q(z_1, z_2, \overline{z}_1, \overline{z}_2) = z_1^2 + z_1\overline{z}_2$. Equation \eqref{plugin234} gives
\begin{align}
 Q(z) &=    (w_1 + \alpha S_1(w) + \mathcal{O}(|\alpha|^2))^2 + (w_1 + \alpha S_1(w) + \mathcal{O}(|\alpha|^2)\overline{(w_2 + \alpha S_2(w) + \mathcal{O}(|\alpha|^2))} \\ \nonumber
 &=    (w_1 + \alpha S_1(w))^2 + (w_1 + \alpha S_1(w))(\overline{w}_2 + \alpha \overline{S_2(w)}) + \mathcal{O}(|\alpha|^2) \\ \nonumber
 &= w_1^2 + w_1\overline{w}_2 + \alpha(2w_1S_1(w) + w_1\overline{S_2(w)} + S_1(w)\overline{w}_2) + \mathcal{O}(|\alpha|^2) \\ \nonumber
 &= Q(w) + \alpha \frac{d}{dt} Q(w)\left|{\begin{array}{l}
  \dot{w}_1 =   S_1(w) \\
  \dot{w}_2 =   S_2(w) 
\end{array}}\right. + \mathcal{O}(|\alpha|^2) \\ \nonumber
 &= Q(w) + \alpha[Q||S](w) + \mathcal{O}(|\alpha|^2)\,  ,
\end{align}
which is in accordance with Lemma \ref{useful1}. \hfill $\triangle$
\end{ex}
\noindent Returning to the transformation \eqref{expresssss1} with inverse Equation \eqref{expresssss10}, we may in fact conclude the following:
\begin{lem}\label{fullinverse0}
Suppose we have a coordinate transformation of the form
\begin{equation}\label{expresssss01}
w_k = z_k - \alpha P_k(z) \, ,
\end{equation}
where each $P_k: \mathbb{C}^n \rightarrow \mathbb{C}$ is a complex polynomial of lower degree $2$ or higher. The inverse transformation is given by
\begin{equation}\label{expresssss122}
z_k = w_k + \alpha P_k(w) + \alpha^2 [P_k||P](w) + \mathcal{O}(|\alpha|^3|w|^4)\, ,
\end{equation}
where $P = (P_1, \dots, P_n)$.
\end{lem}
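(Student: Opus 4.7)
The plan is to invoke Lemma \ref{lemzinw0} and then identify the $\alpha^2$-coefficient in the resulting formal series using Lemma \ref{useful1}. Since each $P_k$ has lower degree $d \geq 2$, Lemma \ref{lemzinw0} immediately yields a formal expansion
\begin{equation*}
z_k = w_k + \alpha P_k(w) + \alpha^2 R_{k,2}(w) + \alpha^3 R_{k,3}(w) + \dots\, ,
\end{equation*}
in which each $R_{k,t}$ is a polynomial of lower degree at least $(d-1)t+1 \geq t+1$. In particular, $R_{k,3}$ has lower degree $\geq 4$, $R_{k,4}$ has lower degree $\geq 5$, and so on, so the tail from $t \geq 3$ is of the asserted form $\mathcal{O}(|\alpha|^3|w|^4)$. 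Thus the whole statement reduces to showing that $R_{k,2}(w) = [P_k||P](w)$.

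To pin down $R_{k,2}$, I would substitute the expansion back into the defining relation $w_k = z_k - \alpha P_k(z)$ and isolate the coefficient of $\alpha^2$. This is exactly the identity recorded in Equation \eqref{compaarq0} from the proof of Lemma \ref{lemzinw0}, namely
\begin{equation*}
R_{k,2}(w) = [\alpha^1]\, P_k\bigl(w_1 + \alpha R_{1,1}(w) + \mathcal{O}(|\alpha|^2),\, \dots,\, w_n + \alpha R_{n,1}(w) + \mathcal{O}(|\alpha|^2)\bigr).
\end{equation*}
Since Lemma \ref{lemzinw0} also gives $R_{k,1}(w) = P_k(w)$, the inner substitution is precisely of the form treated by Lemma \ref{useful1} with $S_k = P_k$. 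Applying that lemma to the polynomial $P_k$ yields
\begin{equation*}
P_k(z) = P_k(w) + \alpha [P_k||P](w) + \mathcal{O}(|\alpha|^2),
\end{equation*}
so extracting the $\alpha^1$ coefficient identifies $R_{k,2}(w) = [P_k||P](w)$, as required.

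There is no real obstacle: the only step one must be careful about is the bookkeeping of orders in $w$, which is already guaranteed by the lower-degree estimates in Lemma \ref{lemzinw0}. A minor sanity check worth including is that $[P_k||P]$ is itself of lower degree $\geq (d-1) + (d-1) + 1 = 2d - 1 \geq 3$ by Lemma \ref{useful0b}, consistent with the bound $R_{k,2}$ of lower degree $\geq 3$ produced by Lemma \ref{lemzinw0}.
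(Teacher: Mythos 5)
Your proposal is correct and follows essentially the same route as the paper: existence of the expansion and the $\mathcal{O}(|\alpha|^3|w|^4)$ tail come from Lemma \ref{lemzinw0}, and the coefficient $R_{k,2}=[P_k||P]$ is identified by applying Lemma \ref{useful1} with $S=P$ to $P_k$ composed with the first-order inverse. The only cosmetic difference is that you extract the $\alpha^2$ coefficient via the identity \eqref{compaarq0} inside the proof of Lemma \ref{lemzinw0}, whereas the paper re-derives it directly from the rearranged relation $z_k = w_k + \alpha P_k(z)$; the substance is identical.
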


\begin{proof}
It follows from Lemma \ref{lemzinw0} that we may write
\begin{equation}\label{expresssss1223}
z_k = w_k + \alpha P_k(w) + \alpha^2 R_{k,2}(w) + \mathcal{O}(|\alpha|^3|w|^4)\, ,
\end{equation}
for some function $R_{k,2}(w)$. Hence, we only have to show that $R_{k,2}(w) = [P_k||P](w)$. To this end, we rewrite Expression \eqref{expresssss01} as
\begin{equation}\label{expresssss01inv}
z_k = w_k + \alpha P_k(z) \, .
\end{equation}
Next, we use Equation \eqref{expresssss1223} to write
\begin{equation}\label{expresssss12234}
z_k = w_k + \alpha P_k(w) + \mathcal{O}(|\alpha|^2)\, .
\end{equation}
Applying Lemma \ref{useful1} to the term $P_k(z)$ and the transformation \eqref{expresssss12234} yields
\begin{align}
    P_k(z) = P_k(w) + \alpha [P_k||P](w) + \mathcal{O}(|\alpha|^2)\, .
\end{align}
Combined with Equation \eqref{expresssss01inv}, we obtain
\begin{align}\label{expresssss01inv1}
z_k &= w_k + \alpha (P_k(w) + \alpha [P_k||P](w) + \mathcal{O}(|\alpha|^2)) \\ \nonumber
&= w_k + \alpha P_k(w) + \alpha^2 [P_k||P](w) + \mathcal{O}(|\alpha|^3)\, .
\end{align}
Comparing the two expressions \eqref{expresssss1223} and \eqref{expresssss01inv1} for $z_k$, we see that indeed 
\begin{equation}\label{expresssss122w}
z_k = w_k + \alpha P_k(w) + \alpha^2 [P_k||P](w) + \mathcal{O}(|\alpha|^3|w|^4)\, .
\end{equation}
This proves the lemma.
\end{proof}

Our next step is to differentiate Equation \eqref{expresssss1} with respect to time. This gives us
\begin{align}\label{calcu0001}
\dot{w}_k = \dot{z}_k - \alpha \frac{d}{d t}P_k(z) \, .
\end{align}
We will first focus on the term 
\begin{equation}\label{ddtpkzxx} 
\frac{d}{d t}P_k(z)\, ,
\end{equation}
 and then deal with the term $\dot{z}_k$. \\

\paragraph{The term $\partial_tP_k$}\label{firstterm} 

\noindent We first focus on the term \eqref{ddtpkzxx}. We start by rewriting Equation \eqref{theODE00-1} as
\begin{align}\label{theODE00c}
\dot{z}_k   &= \gamma_kz_k - \beta_k z_k|z_k|^2 + \alpha H_k(z) \\ \nonumber
            &= \gamma_kz_k - \beta_k z_k|z_k|^2 + \alpha{H}^5_k(z)  + \mathcal{O}(|\alpha||z|^6)\, ,
\end{align}
where we recall that ${H}^5_k(z)$ denotes the Taylor expansion of $H_k(z)$ up to fifth order. 
From Equation \eqref{theODE00c} and Lemma \ref{useful0} we get
\begin{align}\label{part1of1}
\frac{d}{dt}P_k(z) &= \frac{d}{dt}P_k(z)\left|{\begin{array}{l}
  \dot{z}_j =  \gamma_jz_j - \beta_j z_j|z_j|^2 + \alpha {H}_j(z)\\
   \forall \, j \in 
   \{1, \dots, n\} \\
\end{array}}\right. \\ \nonumber
&= \frac{d}{dt}P_k(z)\left|{\begin{array}{l}
  \dot{z}_j =  \gamma_jz_j - \beta_j z_j|z_j|^2 + \alpha {H}^5_j(z)\\
   \forall \, j \in 
   \{1, \dots, n\} \\
\end{array}}\right. + \quad \mathcal{O}(|\alpha||z|^7) \\ \nonumber
&= [P_k||(\dots, \gamma_jz_j - \beta_j z_j|z_j|^2 + \alpha {H}^5_j(z), \dots)](z) + \mathcal{O}(|\alpha||z|^7) \\ \nonumber
&= [P_k||(\dots, \gamma_jz_j, \dots)](z) -[P_k|| (\dots, \beta_j z_j|z_j|^2, \dots)](z) \\ \nonumber
&+ \alpha [P_k|| (\dots, {H}^5_j(z), \dots)](z) + \mathcal{O}(|\alpha||z|^7) \\ \nonumber
&= \Gamma P_k(z) - L^1_k(z) + \alpha [P_k|| {H}^5](z) + \mathcal{O}(|\alpha||z|^7) \, ,
\end{align}
where we have set 
\[L^1_k(z) :=  [P_k|| (\dots, \beta_j z_j|z_j|^2, \dots)](z) \] and
\[{H}^5 := (\dots, {H}^5_j(z), \dots) \, .\] 
We have moreover used that $P_k(z)$ has lower degree at least $2$ to arrive at the remainder term  $\mathcal{O}(|\alpha||z|^7)$, and we refer to Definition \ref{defibracket} for the meaning of the term $\Gamma P_k(z)$. Note that $L^1_k(z)$ is a polynomial of lower degree at least $4$, whereas $[P_k|| {H}^5](z)$ has lower degree $3$ or higher.

 Next, we return to Equation \eqref{expresssss10}, which we recall states
\begin{equation}\label{replacccs}
z_k = w_k + \alpha P_k(w) + \mathcal{O}(|\alpha|^2|w|^3) =  w_k  + \mathcal{O}(|\alpha||w|^2)  \, .
\end{equation}
We obtain
\begin{align}\label{part2of1}
    L^1_k(z) &= L^1_k(w) + \mathcal{O}(|\alpha||w|^5) \\ \label{part2of1b}
    [P_k|| {H}^5](z) &= [P_k|| {H}^5](w) + \mathcal{O}(|\alpha||w|^4) \, .
\end{align}
From Lemma \ref{useful1} we furthermore get
\begin{align}\label{part3of1}
    \Gamma P_k(z) &= \Gamma P_k(w) + \alpha [\Gamma P_k||P](w) + \mathcal{O}(|\alpha|^2|w|^4) \, ,
\end{align}
where we have set $P = (P_1, \dots, P_n)$. The remainder term in Equation \eqref{part3of1} follows from the lower degrees of $\Gamma P_k(z)$ and $P_k(z)$, and the remainder in Equation \eqref{replacccs}.

Note  that $[\Gamma P_k||P](w)$ is a polynomial of lower degree $3$ or higher. Combining equations \eqref{part1of1}, \eqref{part2of1}, \eqref{part2of1b} and \eqref{part3of1}, we arrive at:

\begin{lem}\label{part1}
The term 
\[\frac{d}{dt}P_k(z) \]
may be expressed in the new $w$ coordinates by
\begin{align}
\frac{d}{dt}P_k(z) &=  \Gamma P_k(z) - L^1_k(z) + \alpha [P_k|| {H}^5](z) + \mathcal{O}(|\alpha||z|^7) \\ \nonumber
&=  \Gamma P_k(w) - L^1_k(w) + \alpha [P_k|| {H}^5](w) + \alpha [\Gamma P_k||P](w) + \mathcal{O}(|\alpha||w|^5 + |\alpha|^2|w|^4)\, .
\end{align}
\end{lem}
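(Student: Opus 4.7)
The plan is to prove both equalities by direct computation, using the chain rule in combination with the bracket $[\bullet||\bullet]$ and its linearity property (Lemma \ref{useful0}) for the first, and Lemma \ref{useful1} together with careful lower-degree bookkeeping for the second. The lemma is essentially a consolidation of the calculations \eqref{part1of1}--\eqref{part3of1} appearing just before its statement, so no fundamentally new technique is required beyond what has already been developed.

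For the first equality, I would begin by applying the chain rule to $P_k(z(t))$ and substituting the ODE \eqref{theODE00-1} for every $\dot{z}_j$. By the very definition of the bracket this yields
\[
\frac{d}{dt}P_k(z) = [P_k\,||\,(\gamma_j z_j - \beta_j z_j|z_j|^2 + \alpha H_j(z))_j](z).
\]
Because $P_k$ has lower degree at least $2$, Lemma \ref{useful0b} tells us that replacing $H_j$ by its fifth-order Taylor polynomial $H^5_j$ inside the bracket costs only $\mathcal{O}(|\alpha||z|^7)$: each monomial of $P_k$ of degree $m \geq 2$ contributes a term of degree $m + 6 - 1 \geq 7$ when brought into the bracket against $\alpha(H_j - H^5_j) = \mathcal{O}(|\alpha||z|^6)$. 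I would then invoke real linearity of the bracket in its second slot to split the result into the three pieces $\Gamma P_k(z)$, $-L^1_k(z)$, and $\alpha[P_k||H^5](z)$, matching the definitions of $\Gamma$ and $L^1_k$. This gives the first equality.

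For the second equality, I would substitute the inverse coordinate change $z_k = w_k + \alpha P_k(w) + \mathcal{O}(|\alpha|^2|w|^3)$ (Lemma \ref{lemzinw0}, or Lemma \ref{fullinverse0} if higher order is ever needed) into each of the three terms separately and apply Lemma \ref{useful1}. The term $\Gamma P_k(z)$ becomes $\Gamma P_k(w) + \alpha[\Gamma P_k||P](w) + \mathcal{O}(|\alpha|^2|w|^4)$, where the remainder's degree count uses that $\Gamma P_k$ inherits the lower degree $\geq 2$ of $P_k$. The term $L^1_k(z)$ transforms to $L^1_k(w) + \mathcal{O}(|\alpha||w|^5)$, since $L^1_k$ has lower degree at least $4$ (it arises as a bracket of $P_k$, of lower degree $2$, against the cubic term $\beta_j z_j|z_j|^2$). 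Finally the term $\alpha[P_k||H^5](z)$ becomes $\alpha[P_k||H^5](w) + \mathcal{O}(|\alpha|^2|w|^4)$, since $[P_k||H^5]$ has lower degree $3$. Summing the three contributions and absorbing the leftovers into $\mathcal{O}(|\alpha||w|^5 + |\alpha|^2|w|^4)$ yields the stated expression.

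The only real obstacle is the bookkeeping: one must keep separate track of the remainder produced by truncating $H_j$ to $H^5_j$, the one produced by the coordinate change, and the mixed powers of $\alpha$ and $w$ that both generate, then check that every leftover falls into $\mathcal{O}(|\alpha||w|^5 + |\alpha|^2|w|^4)$. The lower-degree bound on $P_k$ (and the induced lower-degree bounds on $\Gamma P_k$, $L^1_k$, and $[P_k||H^5]$) is what makes all the remainders behave. No further non-resonance conditions are needed at this step; that will only enter when one also wants to \emph{solve} for $P_k$, not merely to evaluate $\dot{P}_k$.
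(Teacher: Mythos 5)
Your proposal is correct and follows essentially the same route as the paper: the first equality is obtained exactly as in the paper's display preceding the lemma (chain rule, truncation of $H_j$ to $H^5_j$ with an $\mathcal{O}(|\alpha||z|^7)$ cost from the lower degree $2$ of $P_k$, then splitting by real linearity of the bracket into $\Gamma P_k$, $-L^1_k$ and $\alpha[P_k||H^5]$), and the second equality is obtained by substituting $z_k = w_k + \alpha P_k(w) + \mathcal{O}(|\alpha|^2|w|^3)$ term by term via Lemma \ref{useful1} with the same lower-degree bounds ($\Gamma P_k \geq 2$, $L^1_k \geq 4$, $[P_k||H^5] \geq 3$) used to control the remainders. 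Your degree bookkeeping and remainder estimates match the paper's, so no gap.
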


\paragraph{The term $\dot{z}_k$}\label{secondterm} 

\noindent Next, we focus on the term $\dot{z}_k$. Again we write.
\begin{align}
\dot{z}_k   &= \gamma_kz_k - \beta_k z_k|z_k|^2 + \alpha H_k(z) \\ \nonumber
            &= \gamma_kz_k - \beta_k z_k|z_k|^2 + \alpha H_k^5(z) + \mathcal{O}(|\alpha||z|^6)\, ,
\end{align}
where ${H}^5_k(z)$  denotes the Taylor expansion of $H_k(z)$ up to fifth order.

Recall the result of Lemma \ref{fullinverse0}, which tells us that
\begin{equation}\label{expresssss1225}
z_k = w_k + \alpha P_k(w) + \alpha^2 [P_k||P](w) + \mathcal{O}(|\alpha|^3|w|^4)\, .
\end{equation}
Combined, and using Lemma \ref{useful1}, we get
\begin{align}\label{part2aa}
    \dot{z}_k &= \gamma_kz_k - \beta_k z_k|z_k|^2 + \alpha {H}^5_k(z) + \mathcal{O}(|\alpha||z|^6) \\ \nonumber
    &= \gamma_k(w_k + \alpha P_k(w) + \alpha^2 [P_k||P](w)) + \mathcal{O}(|\alpha|^3|w|^4) \\ \nonumber
    &- \beta_k w_k|w_k|^2 -  \alpha [\beta_k w_k |w_k|^2|| P](w) + \mathcal{O}(|\alpha|^2|w|^5) \\ \nonumber
    &+ \alpha {H}^5_k(w) +\alpha^2[{H}^5_k|| P](w) + \mathcal{O}(|\alpha|^3|w|^4) \\ \nonumber
& + \mathcal{O}(|\alpha||w|^6) \\ \nonumber
&= \gamma_kw_k - \beta_k w_k|w_k|^2 + \alpha(\gamma_kP_k(w) + {H}^5_k(w)) - \alpha [\beta_k w_k |w_k|^2|| P](w) \\ \nonumber
&+ \alpha^2(\gamma_k[P_k||P](w) + [{H}^5_k|| P](w))  + \mathcal{O}(|\alpha||w|^6 + |\alpha|^2|w|^5 + |\alpha|^3|w|^4)\, .
\end{align}
We will write 
\begin{align}
   L^2_k(w) :=  [\beta_k w_k |w_k|^2|| P](w)\, ,
\end{align}
which has lower degree $4$ or higher, to arrive at:
\begin{lem}\label{part2}
The term $\dot{z}_k$ may be expressed in the new $w$ coordinates by
\begin{align}\label{part2tt}
    \dot{z}_k &= \gamma_kw_k - \beta_k w_k|w_k|^2 \\ \nonumber
    &+  \alpha(\gamma_kP_k(w) + {H}^5_k(w))  - \alpha L^2_k(w) \\ \nonumber
  &+ \alpha^2(\gamma_k[P_k||P](w) + [{H}^5_k|| P](w)) \\ \nonumber
& + \mathcal{O}(|\alpha||w|^6 + |\alpha|^2|w|^5 + |\alpha|^3|w|^4)\, .
\end{align}
\end{lem}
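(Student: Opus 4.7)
The strategy is to substitute the inverse coordinate transformation from Lemma~\ref{fullinverse0} into each of the three terms of the original ODE $\dot{z}_k = \gamma_k z_k - \beta_k z_k|z_k|^2 + \alpha H_k(z)$, apply Lemma~\ref{useful1} wherever a polynomial of the $z$-variables must be re-expressed in the $w$-variables, and then collect by powers of $\alpha$. The statement is purely an algebraic substitution; no non-resonance conditions are needed at this step (they only enter when actually inverting the linear operators in the next reduction).

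For the linear term $\gamma_k z_k$ I would plug in the full second-order inverse
$z_k = w_k + \alpha P_k(w) + \alpha^2 [P_k||P](w) + \mathcal{O}(|\alpha|^3|w|^4)$
directly, obtaining $\gamma_k w_k + \alpha \gamma_k P_k(w) + \alpha^2 \gamma_k [P_k||P](w)$ plus a $\mathcal{O}(|\alpha|^3|w|^4)$ remainder. For the cubic term $-\beta_k z_k|z_k|^2$ I would apply Lemma~\ref{useful1} to the polynomial $R(z) := \beta_k z_k|z_k|^2$ with the weaker expansion $z_k = w_k + \alpha P_k(w) + \mathcal{O}(|\alpha|^2|w|^3)$; since $R$ has lower degree $3$ and $P_k$ lower degree $2$, the first-order expansion produces $\beta_k w_k|w_k|^2 + \alpha\, [\beta_k w_k|w_k|^2 || P](w) + \mathcal{O}(|\alpha|^2|w|^5)$, i.e.\ $\beta_k w_k|w_k|^2 + \alpha L^2_k(w) + \mathcal{O}(|\alpha|^2|w|^5)$. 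Finally, for the coupling term $\alpha H_k(z)$ I would split $H_k = H_k^5 + \mathcal{O}(|z|^6)$; after substitution the smooth remainder contributes $\mathcal{O}(|\alpha||w|^6)$ (since $z = w + \mathcal{O}(|\alpha||w|^2)$), and applying Lemma~\ref{useful1} to $H_k^5$ then produces $\alpha H_k^5(w) + \alpha^2 [H_k^5 || P](w) + \mathcal{O}(|\alpha|^3|w|^4)$.

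Summing the three contributions and absorbing the three remainders $\mathcal{O}(|\alpha||w|^6)$, $\mathcal{O}(|\alpha|^2|w|^5)$ and $\mathcal{O}(|\alpha|^3|w|^4)$ reproduces exactly the expression in the statement. There is no real obstacle; the only point requiring attention is the bookkeeping of lower degrees to decide, for each of the three terms, to which order in $\alpha$ the inverse transformation must be expanded. Because $P_k$ has lower degree at least $2$, second order in $\alpha$ is needed only in the linear piece $\gamma_k z_k$, while the cubic and the coupling pieces have enough intrinsic degree that a first-order substitution (as furnished by Lemma~\ref{useful1}) already pushes the remainder beyond the truncation level of the statement.
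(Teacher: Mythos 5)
Your proposal is correct and follows essentially the same route as the paper: the paper likewise splits $\dot z_k$ into the linear, cubic and coupling pieces, substitutes the full second-order inverse of Lemma~\ref{fullinverse0} into $\gamma_k z_k$, and uses Lemma~\ref{useful1} (with the first-order expansion) on $\beta_k z_k|z_k|^2$ and on $H_k^5$, collecting the identical remainders. The degree bookkeeping you flag is exactly what the paper relies on, so there is nothing missing.
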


\subsection{The first reduction}\label{first2}

 We may now substitute the results of Lemma \ref{part1} and Lemma \ref{part2} into
\begin{align}
\dot{w}_i = \dot{z}_i - \alpha \frac{d}{d t}P_i(z) \, .
\end{align}
We obtain
\begin{align}\label{expandedff1}
\dot{w}_k &=  \gamma_kw_k - \beta_k w_k|w_k|^2 \\ \nonumber
    &+  \alpha(\gamma_kP_k(w) + {H}^5_k(w))  - \alpha L^2_k(w) \\ \nonumber
  &+ \alpha^2(\gamma_k[P_k||P](w) + [{H}^5_k|| P](w)) \\ \nonumber
& + \mathcal{O}(|\alpha||w|^6 + |\alpha|^2|w|^5 + |\alpha|^3|w|^4)\\ \nonumber
& - \alpha(\Gamma P_k(w) - L^1_k(w) + \alpha [P_k|| {H}^5](w) + \alpha [\Gamma P_k||P](w)) \\ \nonumber
&=  \gamma_kw_k - \beta_k w_k|w_k|^2 \\ \nonumber
&+  \alpha(\gamma_kP_k(w) + {H}^5_k(w) - \Gamma P_k(w)) + \alpha(L^1_k(w) - L^2_k(w)) \\ \nonumber
&+ \alpha^2(\gamma_k[P_k||P](w) + [{H}^5_k|| P](w) - [P_k|| {H}^5](w) - [\Gamma P_k||P](w)) \\ \nonumber
&+ \mathcal{O}(|\alpha||w|^6 + |\alpha|^2|w|^5 + |\alpha|^3|w|^4)\, . 
\end{align}
By Lemma \ref{useful0} we may further write this as
\begin{align}\label{expandedff2}
\dot{w}_k &=  \gamma_kw_k - \beta_k w_k|w_k|^2 \\ \nonumber
&+  \alpha(\gamma_kP_k(w) + {H}^5_k(w) - \Gamma P_k(w)) + \alpha(L^1_k(w) - L^2_k(w)) \\ \nonumber
&+ \alpha^2[\gamma_kP_k + {H}^5_k - \Gamma P_k||P](w)  - \alpha^2[P_k|| {H}^5](w)  \\ \nonumber
&+ \mathcal{O}(|\alpha||w|^6 + |\alpha|^2|w|^5 + |\alpha|^3|w|^4)\, . 
\end{align}
Next, we claim that our choice of polynomial $P_k = \widehat{H}^5_k$ guarantees that the term
$$\label{solveforalpha}\gamma_kP_k + {H}^5_k - \Gamma P_k$$
vanishes. More precisely, we prove:
\begin{lem}\label{solvinfhomological}
Let $Q$ be a polynomial for which the $k$th non-resonance condition is satisfied. In particular, it follows that the corresponding modified polynomial $\widehat{Q}_k$ is well-defined. We then have 
\begin{align}\label{solveforalpha}\gamma_k\widehat{Q}_k + Q - \Gamma \widehat{Q}_k = 0\,.\end{align}
\end{lem}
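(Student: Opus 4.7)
The plan is to reduce to the case of a single monomial and then read off the identity by direct computation, using the very definition of the modified polynomial $\widehat{Q}_k$ as designed precisely to solve this ``homological'' equation.

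First, I would observe that all three operations appearing in the identity are $\mathbb{C}$-linear in $Q$: the map $Q \mapsto \widehat{Q}_k$ is defined monomial by monomial (Definition \ref{augmentedd}), and $\Gamma$ is linear as a special case of the bracket $[\bullet\|\bullet]$ (Lemma \ref{useful0}). Consequently it suffices to verify Equation \eqref{solveforalpha} when $Q$ is a single monomial
\[
Q(z) = c\, z_1^{s_1}\cdots z_n^{s_n}\bar{z}_1^{t_1}\cdots \bar{z}_n^{t_n},
\]
for which the $k$th non-resonance condition is assumed to hold, i.e. $D - \gamma_k \neq 0$, where I write
\[
D := s_1\gamma_1 + \cdots + s_n\gamma_n + t_1\bar{\gamma}_1 + \cdots + t_n\bar{\gamma}_n.
\]

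Next, by Definition \ref{augmentedd} the modified polynomial is simply
\[
\widehat{Q}_k = \frac{Q}{D - \gamma_k}.
\]
Applying $\Gamma$ to this, Example \ref{exgamma} tells us that $\Gamma$ acts on any monomial of the form above as multiplication by its total ``weight'' $D$, so
\[
\Gamma \widehat{Q}_k = \frac{D\, Q}{D - \gamma_k}.
\]
Combining the two,
\[
\gamma_k \widehat{Q}_k - \Gamma \widehat{Q}_k = \frac{\gamma_k Q - D Q}{D - \gamma_k} = \frac{-(D - \gamma_k) Q}{D - \gamma_k} = -Q,
\]
so $\gamma_k \widehat{Q}_k + Q - \Gamma \widehat{Q}_k = 0$, as desired.

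There is essentially no obstacle here beyond verifying that the denominator $D - \gamma_k$ never vanishes, which is guaranteed by the $k$th non-resonance condition that was hypothesized. The entire content of the lemma is that the definition of $\widehat{Q}_k$ was engineered precisely to invert the operator $\Gamma - \gamma_k \operatorname{Id}$ acting monomial-wise on polynomials: the denominator in Definition \ref{augmentedd} is exactly the eigenvalue of $\Gamma - \gamma_k \operatorname{Id}$ on the monomial, which makes the identity a tautology once linearity is invoked.
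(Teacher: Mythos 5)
Your proof is correct and follows essentially the same route as the paper: reduce to a single monomial by linearity of $Q \mapsto \widehat{Q}_k$ and of $\Gamma$, then observe that $\Gamma$ acts on the monomial as multiplication by the weight $D$ so that $\Gamma\widehat{Q}_k - \gamma_k\widehat{Q}_k = Q$, with the non-resonance condition guaranteeing the denominator $D - \gamma_k$ is nonzero. No gaps.
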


\begin{proof}
By definitions \ref{defibracket} and \ref{augmentedd}, we see that it suffices to show this when $Q$ is given by a single monomial 
\[ Q(z) =  z_1^{s_1}\dots  z_n^{s_n} \overline{z}_1^{t_1}  \dots  \overline{z}_n^{t_n}\, ,\] 
where $t_1, \dots, t_n, s_1, \dots, s_n$ are non-negative integers. More precisely, we use here that the maps $Q \mapsto \widehat{Q}_k$ and $Q \mapsto \Gamma Q = [Q||\dots ,\gamma_jz_j, \dots]$ are complex linear, when defined. By Definition  \ref{augmentedd} we find 
\[\widehat{Q}_k(z) = \frac{z_1^{s_1}\dots  z_n^{s_n} \overline{z}_1^{t_1}  \dots  \overline{z}_n^{t_n}}{s_1\gamma_1 +\dots + s_n\gamma_n + t_1\overline{\gamma}_1 + \dots + t_n\overline{\gamma}_n - \gamma_k} = \frac{Q(z)}{s_1\gamma_1 +\dots + s_n\gamma_n + t_1\overline{\gamma}_1 + \dots + t_n\overline{\gamma}_n - \gamma_k}\, .\]
Example \ref{exgamma} now tells us that 
\begin{align}
\Gamma\widehat{Q}_k(z) &= \frac{(s_1\gamma_1 +\dots + s_n\gamma_n + t_1\overline{\gamma}_1 + \dots + t_n\overline{\gamma}_n)z_1^{s_1}\dots  z_n^{s_n} \overline{z}_1^{t_1}  \dots  \overline{z}_n^{t_n}}{s_1\gamma_1 +\dots + s_n\gamma_n + t_1\overline{\gamma}_1 + \dots + t_n\overline{\gamma}_n - \gamma_k} \\ \nonumber
&= \frac{(s_1\gamma_1 +\dots + s_n\gamma_n + t_1\overline{\gamma}_1 + \dots + t_n\overline{\gamma}_n)Q(z)}{s_1\gamma_1 +\dots + s_n\gamma_n + t_1\overline{\gamma}_1 + \dots + t_n\overline{\gamma}_n - \gamma_k}\, .
\end{align}
We therefore conclude that
\begin{align}
\Gamma\widehat{Q}_k(z) - \gamma_k\widehat{Q}_k(z)&= \frac{(s_1\gamma_1 +\dots + s_n\gamma_n + t_1\overline{\gamma}_1 + \dots + t_n\overline{\gamma}_n)Q(z)}{s_1\gamma_1 +\dots + s_n\gamma_n + t_1\overline{\gamma}_1 + \dots + t_n\overline{\gamma}_n - \gamma_k} \\ \nonumber
&- \frac{\gamma_kQ(z)}{s_1\gamma_1 +\dots + s_n\gamma_n + t_1\overline{\gamma}_1 + \dots + t_n\overline{\gamma}_n - \gamma_k} \\ \nonumber
&=  \frac{(s_1\gamma_1 + \dots + s_n\gamma_n + t_1\overline{\gamma}_1 + \dots + t_n\overline{\gamma}_n - \gamma_k)Q(z)}{s_1\gamma_1 +\dots + s_n\gamma_n + t_1\overline{\gamma}_1 + \dots + t_n\overline{\gamma}_n - \gamma_k}  = Q(z) \, .
\end{align}
Thus, we precisely find 
\begin{align} 
\gamma_k\widehat{Q}_k + Q - \Gamma \widehat{Q}_k = - (\Gamma \widehat{Q}_k - \gamma_k\widehat{Q}_k) + Q = -Q + Q = 0\, ,
\end{align}
which completes the proof.
\end{proof}
As we have used the shorthand notation  $\widehat{H}_k^5 := \widehat{({H^5_k})}_k$, we see that indeed
\begin{align}
\gamma_kP_k + {H}^5_k - \Gamma P_k = \gamma_k\widehat{H}_k^5 + {H}^5_k - \Gamma \widehat{H}_k^5 = 0\, .
\end{align}
Returning to Equation \eqref{expandedff2}, we find that it simplifies to 
\begin{align}\label{expandedff3}
\dot{w}_k &=  \gamma_kw_k - \beta_k w_k|w_k|^2 + \alpha(L^1_k(w) - L^2_k(w))  - \alpha^2[P_k|| {H}^5](w)  \\ \nonumber
&+ \mathcal{O}(|\alpha||w|^6 + |\alpha|^2|w|^5 + |\alpha|^3|w|^4)\, , 
\end{align}

\noindent where we recall that $L^1_k$ and $L^2_k$ are defined as
\begin{align}
L^1_k(w) &:=  [P_k|| (\dots, \beta w_j|w_j|^2, \dots)](w) \text{ and }\\ \nonumber
L^2_k(w) &:=  [\beta_k w_k |w_k|^2|| P](w)\, ,
\end{align}
which are both polynomials of lower degree $4$ or higher. 
\subsection{The second reduction}\label{thesecondreduction}
Next, we wish to get rid of the term $\alpha(L^1_k(w) - L^2_k(w))$ in Equation \eqref{expandedff3}. This follows along the same lines as in the previous reduction. We start by defining new variables
\begin{equation}
    u_k = w_k - \alpha Q_k(w)\,
\end{equation}
where each $Q_k$ is a polynomial of lower degree $4$ or higher. Note that by Lemma \ref{lemzinw0} we may write
\begin{equation}\label{invssofw}
    w_k = u_k + \alpha Q_k(u) + \mathcal{O}(|\alpha|^2|u|^7)\, .
\end{equation}
Using Equation \eqref{expandedff3} we obtain
\begin{align}
    \dot{u}_k &= \dot{w}_k - \alpha \frac{d}{dt}Q_k(w)\left|{\begin{array}{l}
  \dot{w}_j =  \gamma_jw_j\\
   \forall \, j \in 
   \{1, \dots, n\} \\
\end{array}}\right. + \quad \mathcal{O}(|\alpha||w|^6) \\ \nonumber
&= \dot{w}_k - \alpha\Gamma Q_k(w) + \mathcal{O}(|\alpha||w|^6) \\ \nonumber
&= \gamma_kw_k - \beta_k w_k|w_k|^2 +\alpha(L^1_k(w) - L^2_k(w))  - \alpha^2[P_k|| {H}^5](w) - \alpha\Gamma Q_k(w) \\ \nonumber
&+ \mathcal{O}(|\alpha||w|^6 + |\alpha|^2|w|^5 + |\alpha|^3|w|^4) \, .
\end{align}
Next, substituting $w_k$ by the right hand side of Equation \eqref{invssofw} yields
\begin{align}\label{equationalmostthere345}
 \dot{u}_k &= \gamma_kw_k - \beta_k w_k|w_k|^2 +\alpha(L^1_k(w) - L^2_k(w))  - \alpha^2[P_k|| {H}^5](w) - \alpha\Gamma Q_k(w) \\ \nonumber
&+ \mathcal{O}(|\alpha||w|^6 + |\alpha|^2|w|^5 + |\alpha|^3|w|^4) \\ \nonumber
&= \gamma_ku_k + \alpha\gamma_kQ_k(u)- \beta_k u_k|u_k|^2 +\alpha(L^1_k(u) - L^2_k(u))  - \alpha^2[P_k|| {H}^5](u) - \alpha\Gamma Q_k(u) \\ \nonumber
&+ \mathcal{O}(|\alpha||u|^6 + |\alpha|^2|u|^5 + |\alpha|^3|u|^4) \\ \nonumber
&= \gamma_ku_k - \beta_k u_k|u_k|^2 +\alpha(L^1_k(u) - L^2_k(u)+ \gamma_kQ_k(u) - \Gamma Q_k(u) )  - \alpha^2[P_k|| {H}^5](u) \\ \nonumber
&+ \mathcal{O}(|\alpha||u|^6 + |\alpha|^2|u|^5 + |\alpha|^3|u|^4)  \, .
\end{align}
It remains to choose $Q_k$ such that
\begin{equation}\label{tosolve283}
L^1_k(u) - L^2_k(u)+ \gamma_kQ_k(u) - \Gamma Q_k(u) = 0\, .
\end{equation}
Setting $S_k(u) := L^1_k(u) - L^2_k(u)$, Equation \eqref{tosolve283} becomes 
\begin{align}\label{solveforalpha2}
\gamma_kQ_k + S_k - \Gamma Q_k = 0\, ,
\end{align}
which is of the same form as Equation \eqref{solveforalpha}. It therefore follows from Lemma \eqref{solvinfhomological} that a solution to Equation \eqref{tosolve283} is given by $Q_k = \widehat{S}_k := \widehat{(S_k)}_k$, if indeed this is well-defined.
The following lemmas show that the non-resonance conditions of $H_k^5$ are enough to ensure $\widehat{S}_k$ exists.  

\begin{lem}\label{lemnonreso1}
The polynomial $S_k(u) = L^1_k(u) - L^2_k(u)$ may be expressed as the sum of terms $u_k^2\overline{R(u)}$ and  $|u_j|^2R(u)$ for  $j \in \{1, \dots, n\}$ and with $R(u)$ a monomial term appearing in $P_k(u)$.
\end{lem}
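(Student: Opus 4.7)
My plan is to compute $L^1_k$ and $L^2_k$ explicitly using the definition of the bracket $[\bullet || \bullet]$ and then read off the claimed structure. By complex linearity of $R \mapsto [R||S]$ and real linearity of $S \mapsto [R||S]$ (Lemma \ref{useful0}), it suffices to reduce to the case where $P_k$ consists of a single monomial $R(u) = u_1^{s_1}\cdots u_n^{s_n}\overline{u}_1^{t_1}\cdots \overline{u}_n^{t_n}$; the general case follows by summing over the monomial decomposition of $P_k$.

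First I would compute $L^1_k$. Applying formula \eqref{needlaterrron1} from the proof of Lemma \ref{useful0} with $S_j(u) = \beta_j u_j|u_j|^2$, I get
\begin{equation*}
L^1_k(u) \;=\; \sum_{j=1}^n s_j \frac{R(u)}{u_j}\,\beta_j u_j|u_j|^2 \;+\; \sum_{j=1}^n t_j \frac{R(u)}{\overline{u}_j}\,\overline{\beta_j u_j|u_j|^2} \;=\; \sum_{j=1}^n \bigl(s_j\beta_j + t_j\overline{\beta}_j\bigr)\,|u_j|^2\,R(u),
\end{equation*}
where I used $u_j|u_j|^2/u_j = |u_j|^2$ and $\overline{u_j|u_j|^2}/\overline{u}_j = |u_j|^2$. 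So every term of $L^1_k$ has the required form $|u_j|^2 R(u)$ with $R$ a monomial of $P_k$.

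Next I would compute $L^2_k$. Setting $Q(u) = \beta_k u_k^2 \overline{u}_k$ and applying the same formula with $S = P = (P_1,\dots,P_n)$, only the derivatives with respect to $u_k$ and $\overline{u}_k$ survive, giving
\begin{equation*}
L^2_k(u) \;=\; \frac{\partial Q}{\partial u_k}\,P_k(u) \;+\; \frac{\partial Q}{\partial \overline{u}_k}\,\overline{P_k(u)} \;=\; 2\beta_k |u_k|^2 P_k(u) \;+\; \beta_k u_k^2 \,\overline{P_k(u)}.
\end{equation*}
Expanding $P_k = \sum_\nu c_\nu R_\nu$ over its monomials, each contribution is of the form $2 c_\nu \beta_k |u_k|^2 R_\nu(u)$ or $c_\nu \beta_k u_k^2 \overline{R_\nu(u)}$, again precisely of the claimed type.

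Combining, $S_k = L^1_k - L^2_k$ is a sum of monomials of shape $|u_j|^2 R(u)$ and $u_k^2 \overline{R(u)}$ with $R$ a monomial of $P_k$, as asserted. There is no real obstacle here beyond careful bookkeeping of the conjugate substitutions in the bracket; the one pitfall is remembering that $[R\|S]$ is only real-linear (not complex-linear) in $S$, but since $\beta_j u_j|u_j|^2$ appears directly and we only take its complex conjugate through the formula \eqref{needlaterrron1}, this is automatically handled.
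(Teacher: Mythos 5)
Your proposal is correct and follows essentially the same route as the paper's proof: it computes $L^1_k$ on a single monomial of $P_k$ via the explicit bracket formula and linearity in the first slot, and evaluates $L^2_k = 2\beta_k|u_k|^2P_k(u) + \beta_k u_k^2\overline{P_k(u)}$ directly before expanding $P_k$ into its monomials. (The only nitpick is that in the $u_k^2\overline{R_\nu(u)}$ terms the coefficient is $\overline{c_\nu}\beta_k$ rather than $c_\nu\beta_k$, which is immaterial since only the exponents matter for the non-resonance conditions.)
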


\begin{proof}
We start with $L^2_k(u)$. By definition, we have
\begin{align}
    L^2_k(u) &=  [\beta_k u_k |u_k|^2|| P](u) = 2\beta_k u_k\overline{u}_kP_k(u) + \beta_k u_k^2\overline{P_k(u)} \\ \nonumber
    &=  2\beta_k |u_k|^2P_k(u) + \beta_k u_k^2\overline{P_k(u)}\, .
\end{align}
As $P_k(u)$ may be expressed as the sum of monomials that appear in $P_k(u)$ (tautologically), we see that $ L^2_k(u)$  can indeed be written as the sum of terms $u_k^2\overline{R(u)}$ and  $|u_j|^2R(u)$, with $R(u)$ a monomial appearing in $P_k(u)$. \\

 Next, recall that $L^1_k(u)$ is defined as
\begin{equation}
    L^1_k(u) :=  [P_k|| (\dots, \beta_j u_j|u_j|^2, \dots)](u)\, .
\end{equation}
By definition of the bracket $[\bullet|| \bullet]$, this means $L^1_k(u)$ is obtained from $P_k$ by substituting terms $u_j$ by $\beta_j u_j|u_j|^2$ and terms $\overline{u}_j$ by $\overline{\beta_j u_j|u_j|^2} = \overline{\beta_j u_j}|u_j|^2$. More precisely, if $R(u)$ is a monomial term of $P_k(u)$ given by
\begin{align}
    R(u) = u_1^{s_1}\dots  u_n^{s_n} \overline{u}_1^{t_1} \dots \overline{u}_n^{t_n}\, ,
\end{align}
then we find
\begin{align}
    &[R(u)|| (\dots, \beta_j u_j|u_j|^2, \dots)](u)\\ \nonumber
    = &\sum_{j=1}^n s_j u_1^{s_1} \dots   u_j^{s_j-1}(\beta_j u_j|u_j|^2)  \dots  u_n^{s_n} \overline{u}_1^{t_1} \dots \overline{u}_n^{t_n} 
    + \sum_{j=1}^n t_j u_1^{s_1} \dots  u_n^{s_n} \overline{u}_1^{t_1} \dots    \overline{u}_j^{t_j-1}\overline{(\beta_j u_j|u_j|^2)}  \dots  \overline{u}_n^{t_n} \\ \nonumber
     = &\sum_{j=1}^n s_j\beta_j |u_j|^2 u_1^{s_1} \dots   u_j^{s_j}  \dots  u_n^{s_n} \overline{u}_1^{t_1} \dots \overline{u}_n^{t_n}    + \sum_{j=1}^n t_j \overline{\beta}_j|u_j|^2 u_1^{s_1} \dots  u_n^{s_n} \overline{u}_1^{t_1} \dots    \overline{u}_j^{t_j}  \dots  \overline{u}_n^{t_n} \\ \nonumber
    = &\sum_{j=1}^n s_j\beta_j |u_j|^2 R(u)
    + \sum_{j=1}^n t_j \overline{\beta}_j|u_j|^2 R(u) \, .
\end{align}
Hence, by linearity of $[\bullet|| \bullet]$ in the first slot (see Lemma \ref{useful0}), we see that $L^1_k(u)$ is again of the right form.\\

 It follows that $S_k(u) = L^1_k(u) - L^2_k(u)$ can indeed be expressed as a sum of the given monomials terms. This completes the proof.
\end{proof}

\begin{lem}\label{lemnonreso2}
Let $R(u)$ be a monomial and let $k,j \in \{1, \dots, n\}$ be fixed indices. 
The $k$th non-resonance condition of $R(u)$ is satisfied if and only if the $k$th non-resonance condition of $|u_j|^2R(u)$ is satisfied, if and only if the $k$th non-resonance condition of $u_k^2\overline{R(u)}$ is satisfied.
\end{lem}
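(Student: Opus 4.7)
The plan is to verify both equivalences by a direct computation of the linear form that defines the $k$th non-resonance condition on each of the three monomials. The key observation is that multiplying by $|u_j|^2$ leaves this form literally unchanged, while passing to $u_k^2\overline{R}$ merely negates it, and negation preserves the property of being non-zero.

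First I would fix notation by writing $R(u) = u_1^{s_1}\cdots u_n^{s_n}\overline{u}_1^{t_1}\cdots\overline{u}_n^{t_n}$ and abbreviating the $k$th non-resonance expression as $N_R := \sum_{i=1}^n (s_i - t_i)\omega_i - \omega_k$, so that by Definition~\ref{nonreso} the condition in question is $N_R \neq 0$. For the first equivalence, since $|u_j|^2 R = u_j \overline{u}_j R$ is the monomial obtained from $R$ by incrementing both $s_j$ and $t_j$ by one, the corresponding expression changes by $\omega_j - \omega_j = 0$; the two non-resonance expressions are therefore literally equal, not merely equivalent as conditions.

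For the second equivalence I would use that complex conjugation swaps the roles of the $s_i$ and $t_i$ exponents, so that the exponents of $u_k^2\overline{R}$ are $(s_i', t_i')$ with $s_i' = t_i$ for $i \neq k$, $s_k' = t_k + 2$, and $t_i' = s_i$ for every $i$. Substituting into the definition yields
\[
N_{u_k^2\overline{R}} \;=\; \sum_i t_i\omega_i + 2\omega_k - \sum_i s_i\omega_i - \omega_k \;=\; -N_R,
\]
and $N_R \neq 0 \iff -N_R \neq 0$, which gives the second equivalence.

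There is no real obstacle beyond careful bookkeeping of exponents when passing to the conjugate. The only point worth emphasising is that the factor $u_k^2$, rather than for instance $u_k\overline{u}_k$, is exactly what is needed so that the extra $+2\omega_k$ produced by $s_k' = t_k + 2$ combines with the $-\omega_k$ built into the definition to flip the overall sign of $N_R$ cleanly. This is precisely why the lemma is formulated for the two families of terms identified in Lemma~\ref{lemnonreso1}, and it is what will allow the construction of $\widehat{S}_k$ in the second reduction to proceed under the same non-resonance hypotheses already assumed for $H_k^5$.
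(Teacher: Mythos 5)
Your proof is correct and follows essentially the same route as the paper: compute the non-resonance linear form explicitly, observe that multiplying by $|u_j|^2$ leaves it unchanged (the $+\omega_j-\omega_j$ cancellation), and that passing to $u_k^2\overline{R}$ negates it, which preserves non-vanishing. The bookkeeping of exponents under conjugation and the role of the factor $u_k^2$ match the paper's argument exactly.
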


\begin{proof}
We write
\begin{align}
    R(u) = u_1^{s_1}\dots u_n^{s_n} \overline{u}_1^{t_1} \dots \overline{u}_n^{t_n}\, ,
\end{align}
so that the $k$th non-resonance condition of $R(u)$ is given by
\begin{align}\label{oricompareq}
   s_1\omega_1 + \dots + {s_n}\omega_n - t_1 \omega_1 - \dots - {t_n} \omega_n - \omega_k \not= 0\, .
\end{align}
It follows that the $k$th non-resonance condition of $|u_j|^2R(u)$ is given by
\begin{align}
  \omega_j - \omega_j + &s_1\omega_1 + \dots + {s_n}\omega_n - t_1 \omega_1 - \dots - {t_n} \omega_n - \omega_k \\ \nonumber
  = &s_1\omega_1 + \dots + {s_n}\omega_n - t_1 \omega_1 - \dots - {t_n} \omega_n - \omega_k\not= 0\, ,
\end{align}
which coincides with that of $R(u)$. \\
Likewise, the $k$th non-resonance condition of $u_k^2\overline{R(u)}$ is given by
\begin{align}
  &2\omega_k - s_1\omega_1 - \dots - {s_n}\omega_n + t_1 \omega_1 + \dots + {t_n} \omega_n - \omega_k \\ \nonumber
 =  - &s_1\omega_1 - \dots - {s_n}\omega_n + t_1 \omega_1 + \dots + {t_n} \omega_n + \omega_k \\ \nonumber
  =  - &(s_1\omega_1 + \dots + {s_n}\omega_n - t_1 \omega_1 - \dots - {t_n} \omega_n - \omega_k) \not= 0\, ,
\end{align}
which is again equivalent to Equation \eqref{oricompareq}. This completes the proof.
\end{proof}
Lemmas \ref{lemnonreso1} and \ref{lemnonreso2} guarantee that the $k$th non-resonance condition of $S_k(u) = L^1_k(u) - L^2_k(u)$ is satisfied if the $k$th non-resonance condition of $P_k = \widehat{H}^5_k$ is satisfied. As $\widehat{H}^5_k$ has the same monomial terms as $H^5_k$ (though rescaled), we see that the $k$th non-resonance conditions of $S_k(u)$ are indeed satisfied. Therefore, a solution to Equation \eqref{tosolve283} exists by  Lemma \eqref{solvinfhomological} and may be given by $Q_k = \widehat{S}_k$. Note that this choice of $Q_k$ has lower degree $4$ or higher, as we assumed throughout.

Returning to Equation \eqref{equationalmostthere345}, we finally arrive at
\begin{align}\label{thethetheequation}
 \dot{u}_k = \gamma_ku_k - \beta_k u_k|u_k|^2  - \alpha^2[P_k|| {H}^5](u) + \mathcal{O}(|\alpha||u|^6 + |\alpha|^2|u|^5 + |\alpha|^3|u|^4)  \, .
\end{align}
We have therefore shown:
\begin{proof}[Proof of Theorem \ref{main001} and Proposition \ref{newintterms}]
The calculations in this section show that the successive coordinate transformations 
\begin{align}
  w_k &= z_k - \alpha P_k(z) \\ \nonumber
  u_k &= w_k - \alpha Q_k(w)\,
\end{align}
bring the ODE 
$$\dot{z}_k = \gamma_kz_k - \beta_k z_k|z_k|^2 + \alpha H_k(z)$$  
into the form
$$ \dot{u}_k = \gamma_ku_k - \beta_k u_k|u_k|^2  - \alpha^2[P_k|| {H}^5](u) + \mathcal{O}(|\alpha||u|^6 + |\alpha|^2|u|^5 + |\alpha|^3|u|^4)  \, .$$
If we now set $$G_k(u) := [P_k|| {H}^5] = [\widehat{H}^5_k|| {H}^5]\, ,$$
then we indeed get 
$$ \dot{u}_k = \gamma_ku_k - \beta_k u_k|u_k|^2  - \alpha^2G_k(u) + \mathcal{O}(|\alpha||u|^6 + |\alpha|^2|u|^5 + |\alpha|^3|u|^4)  \, .$$
This completes the proof.
\end{proof}

\newpage

\section{Anomalous synchronization on a 4-node ring}\label{anomalous}

Consider the four node ring network with a coupling function
$
h(z,w) = z \bar w. 
$
\noindent
leading to 
\begin{align}\label{example1_ode}
\dot{z}_1 &= \gamma_1z_1 - \beta z_1|z_1|^2 + \alpha (z_1\overline{z}_2 + z_1\overline{z}_4 )\\ \nonumber
\dot{z}_2 &= \gamma_2z_2 - \beta z_2|z_2|^2 + \alpha (z_2\overline{z}_3 + z_2\overline{z}_1 ) \\ \nonumber 
\dot{z}_3 &= \gamma_3z_3 - \beta z_3|z_3|^2 + \alpha (z_3\overline{z}_4 + z_3\overline{z}_2 ) \\ \nonumber
\dot{z}_4 &= \gamma_4z_4 - \beta z_4|z_4|^2 + \alpha (z_4\overline{z}_1 + z_4\overline{z}_3 ) \, 
\end{align}

We set the parameters $\beta_k = -1$, $\gamma_k = \lambda + i \omega_k$, $\lambda = 1 $ and $\omega_1 = 1+ \delta$, $\omega_2 = 1$ and $\omega_3 = 5$ and $\omega_4 = 6$ for performing the simulation of Eq.~(\ref{example1_ode}). We then vary the mismatch $\delta$ and coupling $\alpha$. Notice that by a naive inspection of the original equations we obtain 
$$
\dot \theta_k = \omega_k + \alpha \sum_{\ell=1}^4 
A_{k \ell} \sin \theta_{\ell} 
$$
thus, instead of a diffusive interaction we would obtain a interaction akin to forcing \cite{Eroglu_2017_sync_chaos}. For each set of $\alpha$ and $\delta$ values, we simulate the network for  50000s with 0.01 time step. We remove first 10000s from  as transient and compute the unwrapped phases. As we are interested in the phase synchronization, we introduce a new variable for the phase differences, $\phi = \theta_1-\theta_2$, and a naive calculation leads to  
$$
\dot \phi = \delta + \alpha[ \sin \theta_2 + \sin \theta_4 - \sin \theta_1 - \sin \theta_3]  + O(\alpha^2). 
$$
Because $\omega_2,\omega_3 \ll \omega_1$, we can average over the fast oscillations and neglect contributions from these phases then  $\theta_2 = \theta_1 + \phi$ we would obtain a interaction term as 
$
\alpha [\sin \theta_1 \cos \phi + \cos \theta_1 \sin \phi] + O(\alpha^2),
$
however, since $\theta_1$ is a fast variable for $\phi$ we also average over $\theta_1$. Thus the whole interaction term linear in $\alpha$ vanishes.  

We calculate the mean synchronization error as
\begin{align}\label{reconstructed_md}
E = \frac{1}{T} \sum_{t=1}^T |\phi(t)|.
\end{align}
The synchronization error for varying $\delta = -0.2$ to $0.2$ with $0.01$ step size and $\alpha = 0.0$ to $0.5$ with step size $0.025$, we observed a synchronization tongue scales with $\alpha \propto \sqrt{\delta}$ (Supplementary Fig.~\ref{fig:sync_tongue}).

\begin{figure}[h]
    \centering
    \includegraphics[width=0.5\columnwidth]{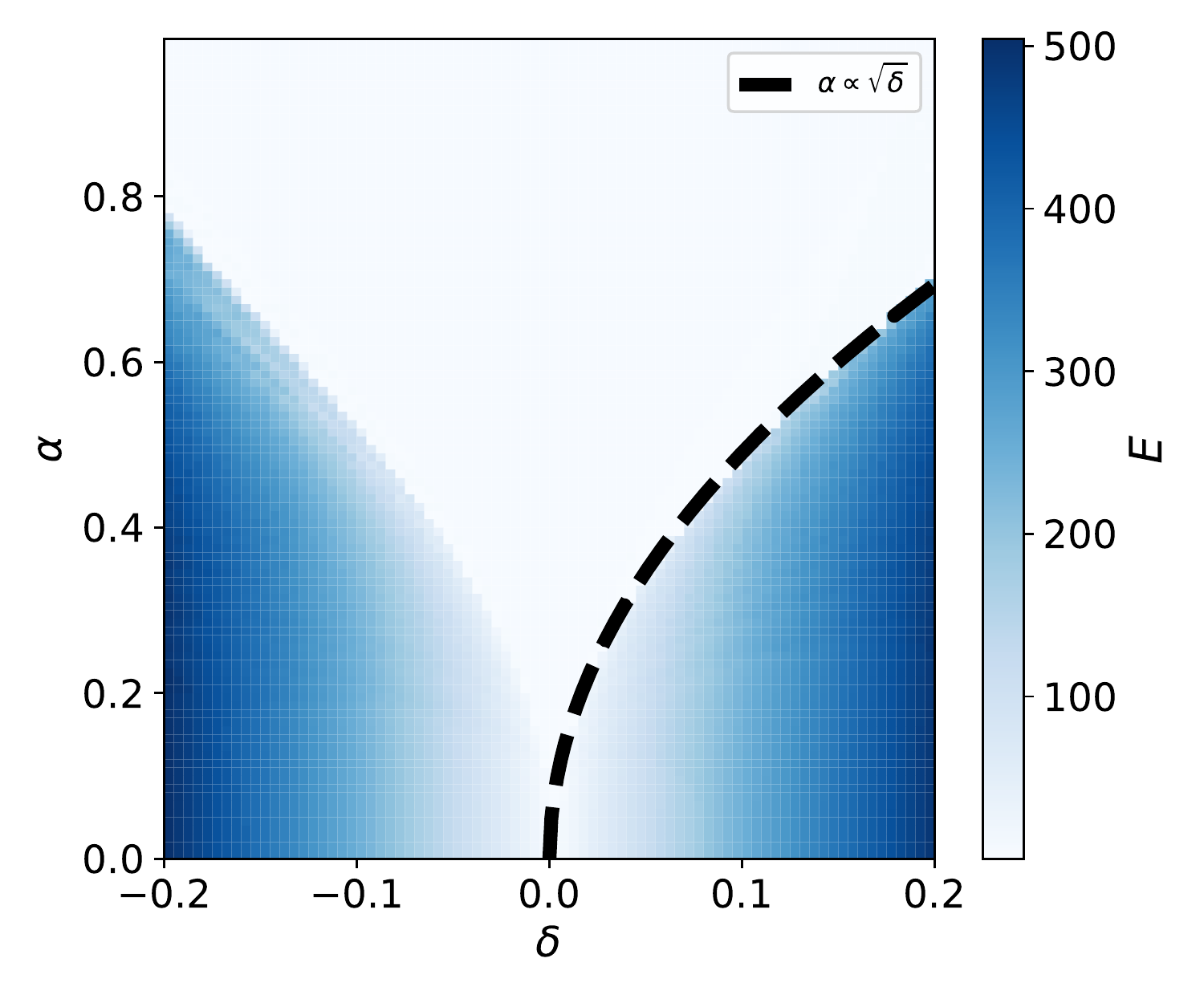}
    \caption{{\bf Arnold tongue represents the phase synchronization between nodes 1 and 2}. The tongue is illustrated for varying mismatch $\delta$ and coupling strength $\alpha$. As the absolute value of $|\delta| \ll 1$, synchronization occurs for small coupling strength. However, if the mismatch is high then also stronger coupling is needed to emerge synchronization. The borderline between synchrony and asynchrony scales with $\alpha \propto \sqrt{\delta}$.}
    \label{fig:sync_tongue}
\end{figure}

\subsection{Explanation of anomalous synchronization via normal form theory}

Recall that we may bring this ODE into the form
\begin{align}\label{repeattt1}
\dot{u}_1 &= \gamma_1u_1 - \beta u_1|u_1|^2 \\ \nonumber
&- \alpha^2 u_1 \left(\frac{\overline{u}_2\overline{u}_4}{\overline{\gamma}_2} + \frac{\overline{u}_4\overline{u}_2}{\overline{\gamma}_4} + \frac{\overline{u}_2\overline{u}_2}{\overline{\gamma}_2} + \frac{\overline{u}_4\overline{u}_4}{\overline{\gamma}_4} + \frac{\overline{u}_2u_3}{\overline{\gamma}_2} + \frac{\overline{u}_2u_1}{\overline{\gamma}_2} + 
\frac{\overline{u}_4u_3}{\overline{\gamma}_4} +
\frac{\overline{u}_4u_1}{\overline{\gamma}_4} \right) + \text{h.o.t.}\\ \nonumber
\dot{u}_2 &= \gamma_2u_2 - \beta u_2|u_2|^2 \\ \nonumber
&- \alpha^2 u_2 \left(\frac{\overline{u}_1\overline{u}_3}{\overline{\gamma}_1} + \frac{\overline{u}_3\overline{u}_1}{\overline{\gamma}_3} + \frac{\overline{u}_1\overline{u}_1}{\overline{\gamma}_1} + \frac{\overline{u}_3\overline{u}_3}{\overline{\gamma}_3} + \frac{\overline{u}_3u_4}{\overline{\gamma}_3} + \frac{\overline{u}_3u_2}{\overline{\gamma}_3} + 
\frac{\overline{u}_1u_4}{\overline{\gamma}_1} +
\frac{\overline{u}_1u_2}{\overline{\gamma}_1} \right) + \text{h.o.t.}\\ \nonumber
\dot{u}_3 &= \gamma_3u_3 - \beta u_3|u_3|^2 \\ \nonumber
&- \alpha^2 u_3 \left(\frac{\overline{u}_2\overline{u}_4}{\overline{\gamma}_2} + \frac{\overline{u}_4\overline{u}_2}{\overline{\gamma}_4} + \frac{\overline{u}_2\overline{u}_2}{\overline{\gamma}_2} + \frac{\overline{u}_4\overline{u}_4}{\overline{\gamma}_4} + \frac{\overline{u}_2u_3}{\overline{\gamma}_2} + \frac{\overline{u}_2u_1}{\overline{\gamma}_2} + 
\frac{\overline{u}_4u_3}{\overline{\gamma}_4} +
\frac{\overline{u}_4u_1}{\overline{\gamma}_4} \right) + \text{h.o.t.}\\ \nonumber
\dot{u}_4 &= \gamma_4u_4 - \beta u_4|u_4|^2 \\ \nonumber
&- \alpha^2 u_4 \left(\frac{\overline{u}_1\overline{u}_3}{\overline{\gamma}_1} + \frac{\overline{u}_3\overline{u}_1}{\overline{\gamma}_3} + \frac{\overline{u}_1\overline{u}_1}{\overline{\gamma}_1} + \frac{\overline{u}_3\overline{u}_3}{\overline{\gamma}_3} + \frac{\overline{u}_3u_4}{\overline{\gamma}_3} + \frac{\overline{u}_3u_2}{\overline{\gamma}_3} + 
\frac{\overline{u}_1u_4}{\overline{\gamma}_1} +
\frac{\overline{u}_1u_2}{\overline{\gamma}_1} \right) + \text{h.o.t.} 
\end{align}
We will assume that $\gamma_1 \approx \gamma_2$. In particular, we consider the possibility that $\gamma_1 = \gamma_2$. In that case we have $\gamma_1 + \overline{\gamma_2} = 2\re(\gamma_1)$, which may be arbitrarily small. Therefore, we may not assume that $\gamma_1 + \overline{\gamma_2} \not= 0$. Other than this, there are no relevant restrictions. I.e., $\gamma_1, \dots, \gamma_4, \gamma_2 - \overline{\gamma_3} \dots \gamma_3 - \overline{\gamma_4}$ are all sufficiently large.
It follows that we may bring equation \eqref{repeattt1} into the form
\begin{align}\label{repeattt2}
\dot{v}_1 &= \gamma_1v_1 - \beta v_1|v_1|^2 - \epsilon\frac{v_1\overline{v}_2v_1}{\overline{\gamma}_2}  + \mathcal{O}(|\epsilon, v|^5)\\ \nonumber
\dot{v}_2 &= \gamma_2v_2 - \beta v_2|v_2|^2 - \epsilon\frac{v_2\overline{v}_1v_2}{\overline{\gamma}_1}  + \mathcal{O}(|\epsilon, v|^5)\\ \nonumber
\dot{v}_3 &= \gamma_3v_3 - \beta v_3|v_3|^2 - \epsilon\frac{v_3\overline{v}_2v_1}{\overline{\gamma}_2}  + \mathcal{O}(|\epsilon, v|^5)\\ \nonumber
\dot{v}_4 &= \gamma_4v_4 - \beta v_4|v_4|^2 - \epsilon\frac{v_4\overline{v}_1v_2}{\overline{\gamma}_1}  + \mathcal{O}(|\epsilon, v|^5)\, ,
\end{align}
where $\epsilon = \alpha^2$. Note that the network topology has changed drastically. Moreover, performing a phase reduction and introducing the phase difference $\psi = \theta_1 - \theta_2$
we obtain

$$
\dot \psi = \delta - c \alpha^2 \sin \psi
$$

where $c$ is a constant depending on $\gamma_1$. By analyzing the fixed points of this equation we obtain the synchronization tongue behavior where the critical coupling $\alpha_c$ for synchronization scales as $\sqrt{\delta}$.

\newpage

\section{Phase reduction for $h = (z^2 +  z) \bar w$ and resonance $\omega_1 - \omega_{2,4} + \omega_3 = 0$}\label{Sec:NumRecovery}

For simplicity we fix $\beta_k = -1$ and obtain
$
\gamma_k = r_0^2 + i \omega_k. 
$ We also introduce $\Delta_{pq} = \omega_p - \omega_q$.  
Note that $r(t) = r_0 + h.o.t$ along with
$
\bar z \dot z = i r_0^2 \dot \theta +h.o.t.
$
and
\begin{eqnarray}
\frac{1}{\gamma_p + \bar \gamma_q} &=& \frac{2r_0^2 - i \Delta_{pq}}{4 r_0^4 + \Delta_{pq}^2}
\end{eqnarray}
Replacing these observations into Eq. (6) of the main manuscript and performing the reduction we obtain the functions in Eq. (7-8) of the main manuscript as

\begin{eqnarray}
\rho_{pq}(\phi)  &=& - \frac{\Delta_{pq}}{4 r_0^4 + \Delta_{pq}^2} \cos \phi +  \frac{2r_0^2}{4 r_0^4 + \Delta_{pq}^2} \sin \phi \nonumber \\
\sigma_{pqr}(\phi) &=& - \chi_{pqr} \sin \phi + \upsilon_{pqr} \cos \phi \nonumber \\
\end{eqnarray}

where 
$$
\chi_{pqr} = r_0^2 \left( \frac{4}{4 r_0^4 +  \Delta_{pq}^2} 
+ \frac{4}{4 r_0^4 +  \Delta_{pr}^2} 
+ \frac{1}{r_0^4 + \omega_q^2} 
+ \frac{1}{r_0^4 + \omega_r^2} \right)
$$
and
$$
\upsilon_{pqr} =  - \frac{ 2\Delta_{pq}}{4 r_0^4 +  \Delta_{pq}^2} 
- \frac{2 \Delta_{pr}}{4 r_0^4 +  \Delta_{pr}^2} 
+ \frac{\omega_q}{r_0^4 + \omega_q^2} 
+ \frac{\omega_r}{r_0^4 + \omega_r^2}
$$

\section{Slow phase dynamics for $h = (z^2 +  z) \bar w$ and resonance $\omega_1 - \omega_{2,4} + \omega_3 = 0$}
We consider networks of $n$ coupled oscillators
\begin{equation}\label{Eq1}
\dot{z}_k = f_k(z_k) + \alpha \sum_{\ell=1}^n A_{k\ell} h_k(z_k, z_{\ell})
\end{equation}
where $z_k \in \mathbb{C}$ is the state of the $k$th oscillator, $f_k :  \mathbb{C} \rightarrow  \mathbb{C}$  is its isolated vector field, $h_k:  \mathbb{C} \times  \mathbb{C} \rightarrow  \mathbb{C}$ is the pairwise coupling function, $ \bm A = ( A_{ij})_{i,j=1}^n$ is the adjacency matrix describing the network structure, and $\alpha > 0 $ is the coupling strength.  We then generate a multivariate time series for a four-node ring network, as illustrated in Figure \ref{fig:networks}~(a), with nonlinear pairwise coupling function
\begin{equation}\label{degree3}
h(z,w) = (z + z^2) \bar w. 
\end{equation} 
\noindent 

\begin{figure}
\centering
\includegraphics[width=.60\columnwidth]{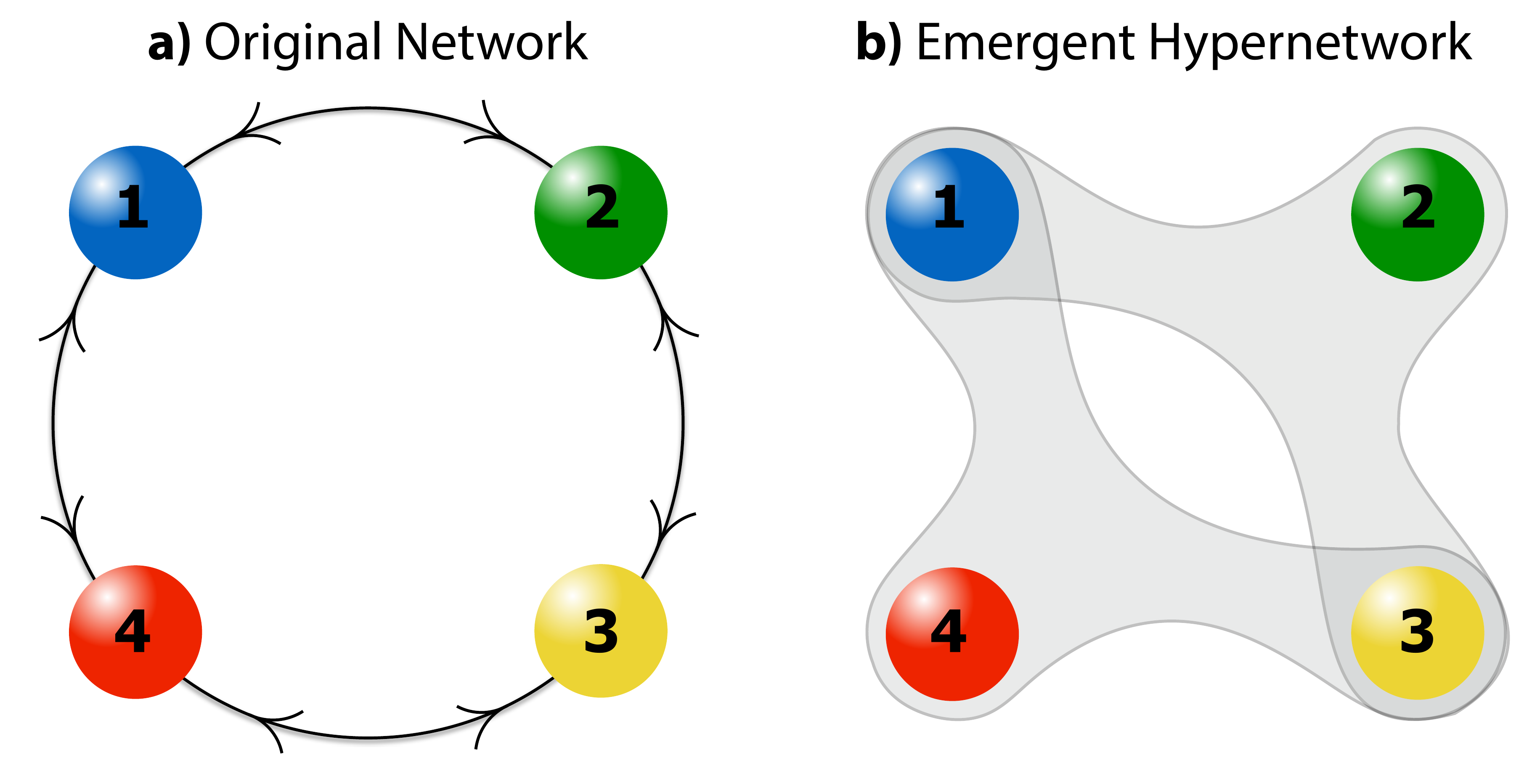}
\caption{{\bf Emergent hypernetworks with triplet interactions.} (a) The original ring network. (b) Hypernetwork recovered from original dynamics data. The state of a given node is influenced by triplet interactions. We show that as along as some sparsity is imposed when the model is obtained from data, only hypernetworks can be found. Our theory predicts the emergence of the hypernetwork determined by the original coupling function $h$, the network, and the resonance relations among the isolated frequencies.}
    \label{fig:networks}
\end{figure}

We fix $\lambda = 0.15 $,  $\omega_1 = 1.01$, $\omega_2 = 2.5$,  $\omega_3 = 1.5$,  $\omega_4 = 2.49$, and $\alpha = 0.18$.  Numerical integration of complex differential equations  is used to solve the differential equations for 10000-sec with 0.01-sec time-step. We discard the first 5000-sec points as transient, and we obtain a multivariate time series  $\{ z_1(t), z_2(t), z_3(t),z_4(t)  \}_{t=1}^{5000}$. 

Next, we aim at obtaining a model from the multivariate time series of $z$. Because $\alpha$ is
small and the isolated orbit is exponentially stable, the amplitude of each time series is slightly affected by $|z_k| \approx \sqrt{\lambda} + O(\alpha)$ as illustrated in Figure.~\ref{fig:4nodes_phi_1}~(a), and the dynamics is captured by the phases $\theta_k(t)$ of $z_k(t)$.   Therefore, we perform a polar decomposition $z_k(t) = r_k(t) e^{i \theta_k(t)}$ to get the unwrapped phase of each time series and obtain governing equations of the model from its phase dynamics.  

Each phase $\theta_k$  has a frequency close to   $\omega_k$, as illustrated in Figure~\ref{fig:4nodes_phi_1}~(b). This means that the growth of the phases  is almost linear with coupling terms as perturbations. 
In fact, the coupling terms generically contain fast variables such as phases $\theta$'s and slow variables involving the resonant combinations of phases such as $\theta_1 - \theta_2  + \theta_3$ that change slowly in time.
Therefore, we subtract the linear growth of the phases to analyse the effects of the coupling. To this end, we  introduce 
\begin{equation}\label{new_phases}
\vartheta_k(t) = \theta_k(t) - \Omega_k t, 
\end{equation} 
\noindent 
where $\Omega_k$ is obtained from data under the resonance condition $\Omega_1 -  \Omega_{i} +  \Omega_3 = 0$, with $i = 2,4$. In the new phases the coupling has the same magnitude as the frequency mismatch $\omega_1 - \omega_i + \omega_3$, with $i=2,4$. Finally,  we obtain a model for $\vartheta_k$.
We assume the model  
\[
\dot \vartheta_k = \varepsilon_k + H_k(\vartheta_1, \vartheta_2,\vartheta_3,\vartheta_4)
\]
\noindent
where $H_k = \sum [ c^k_p \sin \vartheta_p + d_p^k \cos \vartheta_p ]  + \sum [ c^k_{p,q} \sin (\vartheta_p - \vartheta_q) + d^k_{p,q} \cos (\vartheta_p - \vartheta_q ] 
+ \sum [ c^{k}_{pq} \sin (\vartheta_p + \vartheta_q - \vartheta_k) + d^{k}_{pq} \cos (\vartheta_p + \vartheta_q - \vartheta_k)] $. Note that this includes pairwise and triplet interactions. We solve for the coefficients to obtain the least square approximation and we impose sparsity by eliminating coefficients below a threshold $\tau = 10^{-4}$. The technique is discussed along with the package to perform the recovery as discussed in the main text.  The model recovery yields
\begin{eqnarray}
\label{recovered_eq}
\dot{\vartheta}_{1,3} &=& \varepsilon_{1,3} + r_{1,3}(\vartheta_1,\vartheta_2,\vartheta_3) + s_{1,3}(\vartheta_1,\vartheta_4, \vartheta_3) \\
\dot{\vartheta}_{2,4} &=&  \varepsilon_{2,4} + r_{2,4}(\vartheta_1,\vartheta_{2,4},\vartheta_3) \nonumber
\end{eqnarray}
where  $s$ and $r$ correspond to triplets in $H_k$ with nonzero coefficients. 

At first sight, the model recovery with triplets is remarkable because the original equations have only pairwise interactions. Nonetheless, a hypernetwork describes the data Figure~\ref{fig:networks}~(b). 
We show that when $\lambda \ll1$ and $\alpha \ll 1$  recovering a hypernetwork from data is  not a coincidence. As long as the coupling $h$ is nonlinear, by measuring the original variables of Eq. (\ref{Eq1}) and performing a sparse model recovery only hypernetworks can be found as they are normal forms of the original equations.

\begin{figure}[h]
 \centering
    \includegraphics[width=0.5\columnwidth]{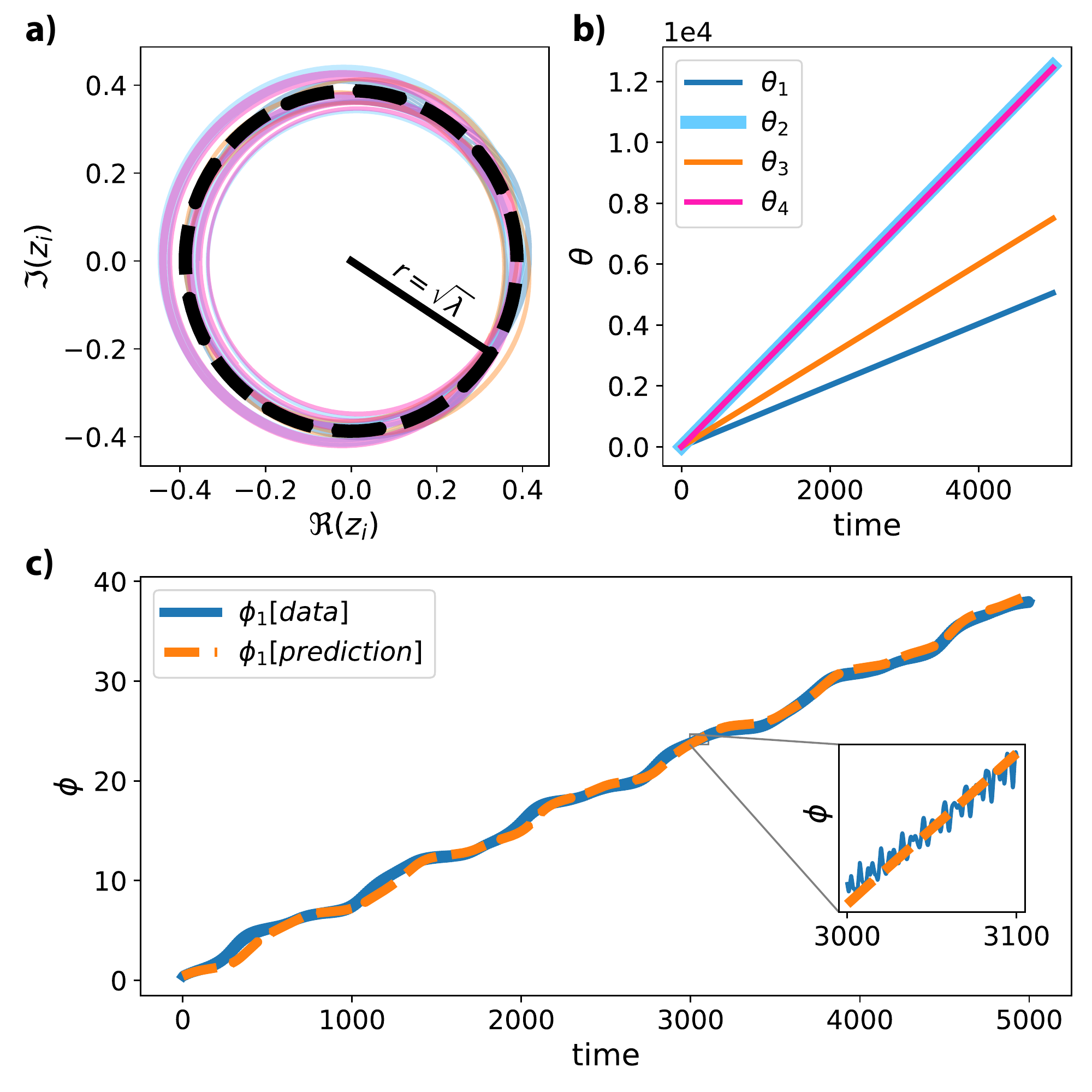}
    \caption{{\bf Time series and  emergent hypernetwork prediction.} The simulation was performed on a ring network (see Supplementary Fig.~\ref{fig:networks}~(a)). (a)   Amplitudes (solid lines) are slightly affected by the coupling and remaining close to a circle with radius $r = \sqrt{\lambda}$ (dashed circle). (b) Unwrapped phases $\theta_i$ growth. (c) Time series of the slow phase $\phi_1$ from data (solid) and the prediction of the emergent hypernetwork (dashed) capturing higher-order interactions  (see Supplementary Fig.~\ref{fig:networks}~(b)).
}
    \label{fig:4nodes_phi_1}
\end{figure}

\subsection{Emergent hypernetwork predicts data behaviour}
To illustrate prediction capabilities of emergent hypernetworks, we introduce the slow phases 
\begin{eqnarray}\label{slow_phases}
\phi_1 &=& \theta_1  - \theta_2 + \theta_3,\\ \nonumber
\phi_2 &=& \theta_1 - \theta_4 + \theta_3,
\end{eqnarray}
where the coupling strength is comparable to the frequency mismatch $\omega_1 - \omega_2 + \omega_3$. Our normal form theory predicts the emergent hypernetwork phase dynamics described in Eq. (15).   
We obtain the vector fields  of the coupled slow phases $\phi_1$ and $\phi_2$  analytically as described in the section above. 

Next, we simulate the vector fields obtained from first principles  using an adaptative Runge-Kutta method of 4th order. We treat the initial condition as unknown and perform a optimization to obtain the the initial condition that provides the minimum least square error between the data of the slow phase and the simulations and theory.  In Figure \ref{fig:4nodes_phi_1} c), we compare our predictions and slow phases estimated from data.
The theoretical prediction is in excellent agreement with the data with an error in the prediction of less than $5\%$ per cycle of the slow phase.

\section{Model recovery of a 3-path with coupling $h = (z^2 +  z)  \bar w$ and resonance $\omega_1 - \omega_2 + \omega_3 = 0$}\label{numerics4}
Now we consider the model for 3-nodes on a chain (Supplementary Fig.~\ref{fig:3nodes}) that reads as
 We consider the network ODE
\begin{align}\label{path}
\dot{z}_1 &= \gamma_1z_1 - \beta z_1|z_1|^2 + \alpha (z_1\overline{z_2} + z_1^2\overline{z_2} )\\ \nonumber
\dot{z}_2 &= \gamma_2z_2 - \beta z_2|z_2|^2 + \alpha ([z_2\overline{z_1} + z_2^2\overline{z_1} ] + [z_2\overline{z_3} + z_2^2\overline{z_3} ])\\ \nonumber
\dot{z}_3 &= \gamma_3z_3 - \beta z_3|z_3|^2 + \alpha (z_3\overline{z_2} + z_3^2\overline{z_2} ) \,  ,
\end{align}

\begin{figure}[!ht]
    \centering
    \includegraphics[width=0.8\columnwidth]{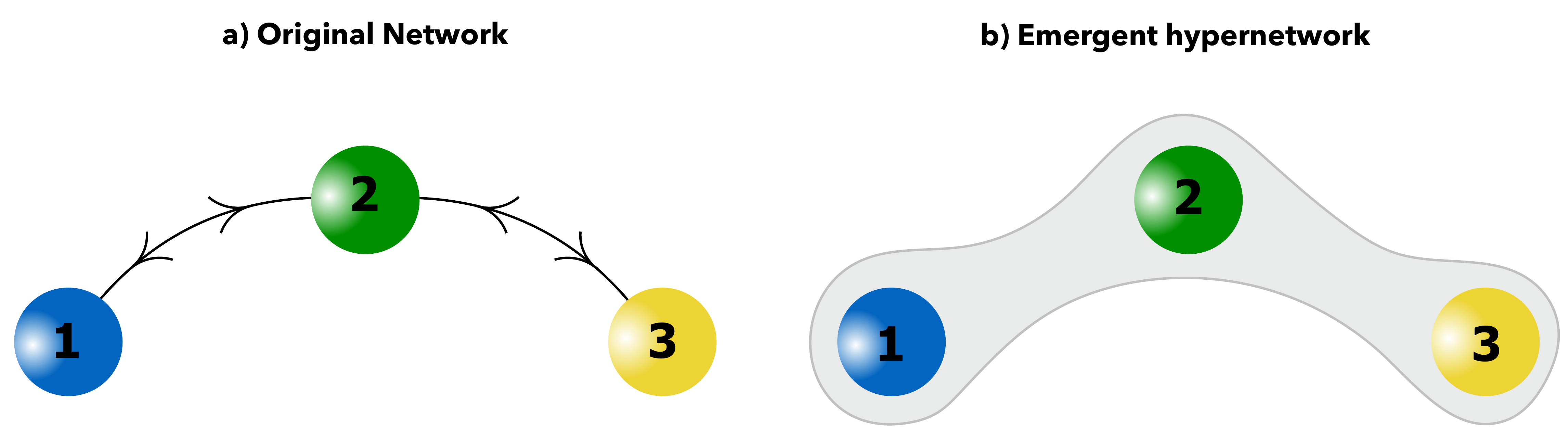}
    \caption{{\bf Emergent hypernetworks with triplet interaction.} Inset a) shows the original chain network. Each isolated node dynamics is close to a Hopf bifurcation. The pairwise coupling function $h$ is nonlinear given with $h = (z^2 + \bar z)w$. Inset b) shows the hypernetwork learnt from the phase dynamics data of the original dynamics. The state of a given node is influenced by the triple interaction of its own state in combination of the incoming links. Our theory also predicts the emergent of such hypernetwork. The hypernetwork  emerges as a combination of the original coupling function $h$, the network, and  the resonance relations of the isolated frequencies.}
    \label{fig:3nodes}
\end{figure}
To integrate Eq.~(\ref{path}) of the main manuscript for 3-node chain with  $\omega_1 = 1.01$, $\omega_2 = 2.5$ and $\omega_3 = 1.5$, we employed a wrapper of ODEPACK routine. Numerical integration for $\alpha = 0.18$ and $\delta = 0.01$ was performed for 10000s with 0.01 time step. We discard the first 5000s points as transient. Using the simulated phases $\theta_i$, we introduce new phases in Eq.~(3) of the main manuscript where $\Omega_1 = 1.0$, $\Omega_2 = 2.5$, $\Omega_3 =  1.5$. Applying the sequential thresholded least-squares method  on these new phases $\vartheta_i$ with thresholding parameter $\lambda = 10^{-4}$ we obtain

\begin{align}
\dot{\vartheta}_1 &= \,\,\,\,\,\,\,\,0.01 - 0.001\cos(\vartheta_1 - \vartheta_2 + \vartheta_3) \\
\dot{\vartheta}_2 &= -0.001  + 0.005 \cos(\vartheta_1 - \vartheta_2 + \vartheta_3) \\
\dot{\vartheta}_3 &= -0.001 - 0.001 \cos(\vartheta_1 - \vartheta_2 + \vartheta_3) 
\end{align}

Because the norms of the functions $r_k$ and $s_k$ are small, we introduce the slow phases 
\begin{eqnarray}\label{new_phi}
\phi &=& \theta_1  - \theta_2 + \theta_3 \nonumber
\end{eqnarray}

We then also perform a reconstruction for the slow phases $\phi$ using the same method  and obtain 

\begin{align}\label{eq:3nodes_recon}
\dot{\phi} = 0.010+ 0.001\sin(\phi) - 0.006\cos(\phi) 
\end{align}

We show the  model prediction and data for the slow phase in Figure \ref{fig:3nodes_phi}.
\begin{figure}[h]
    \centering
    \includegraphics[width=0.5\columnwidth]{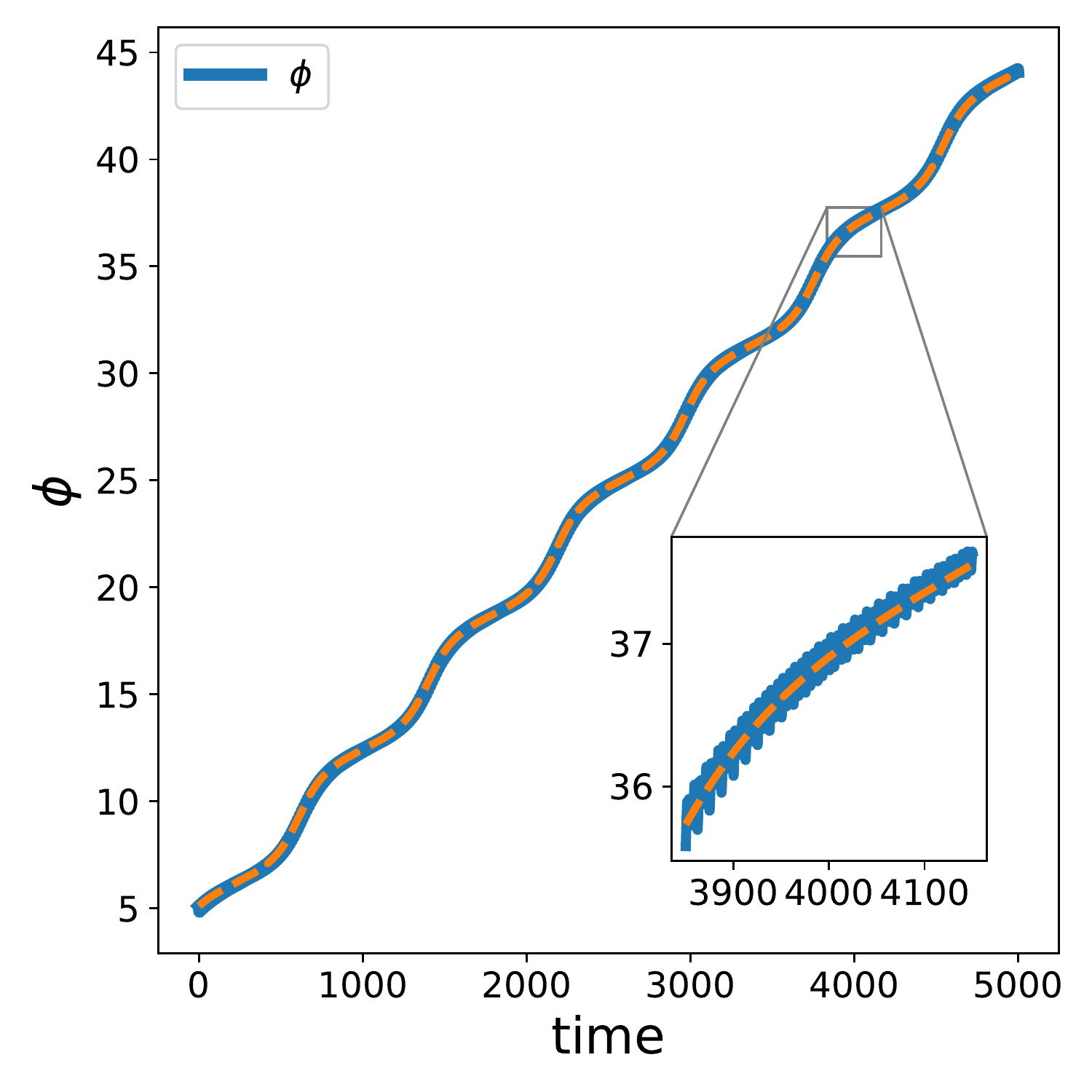}
    \caption{ New slow phase variable  $\phi$ (blue curve) were computed from the data  collected from the simulations of Eq.~(1) on a path. Then Eq.~\ref{eq:3nodes_recon} (orange dashed curve) reconstructed from data of $\phi$ using sequential thresholded least squares method.}
    \label{fig:3nodes_phi}
\end{figure}

\subsection{Emergent network explanation}

The normal-form for this system is given by
\begin{align}
\dot{u}_1 &= \gamma_1u_1 - \beta u_1|u_1|^2 - \alpha^2 \left(\frac{1}{\gamma_1 + \overline{\gamma}_2}\right)u_1^2\overline{u}_2u_3\\ \nonumber
\dot{u}_2 &= \gamma_2u_2 - \beta u_2|u_2|^2 -  \alpha^2 \left(\frac{2}{\gamma_2 + \overline{\gamma}_3} + \frac{2}{\gamma_2 + \overline{\gamma}_1} + \frac{1}{\overline{\gamma}_3} + \frac{1}{\overline{\gamma}_1}\right)u_2^2\overline{u}_1\overline{u}_3\\ \nonumber
\dot{u}_3 &= \gamma_3u_3 - \beta u_3|u_3|^2 -  \alpha^2 \left(\frac{1}{\gamma_3 + \overline{\gamma}_2}\right)u_3^2\overline{u}_2u_1\,  .
\end{align}
The interaction now becomes forth order in $u$. A phase reduction leads to the triplet interaction recovered numerically. 

\newpage

\section{6 nodes network examples with $h(z,w) = z \bar w$}\label{6n}
We consider the network presented in Figure \ref{fig:networks6n}a) with the coupling  $h(z,w) = z \bar w$ leading to 
\begin{figure}[!ht]
    \centering
    \includegraphics[width=0.65\columnwidth]{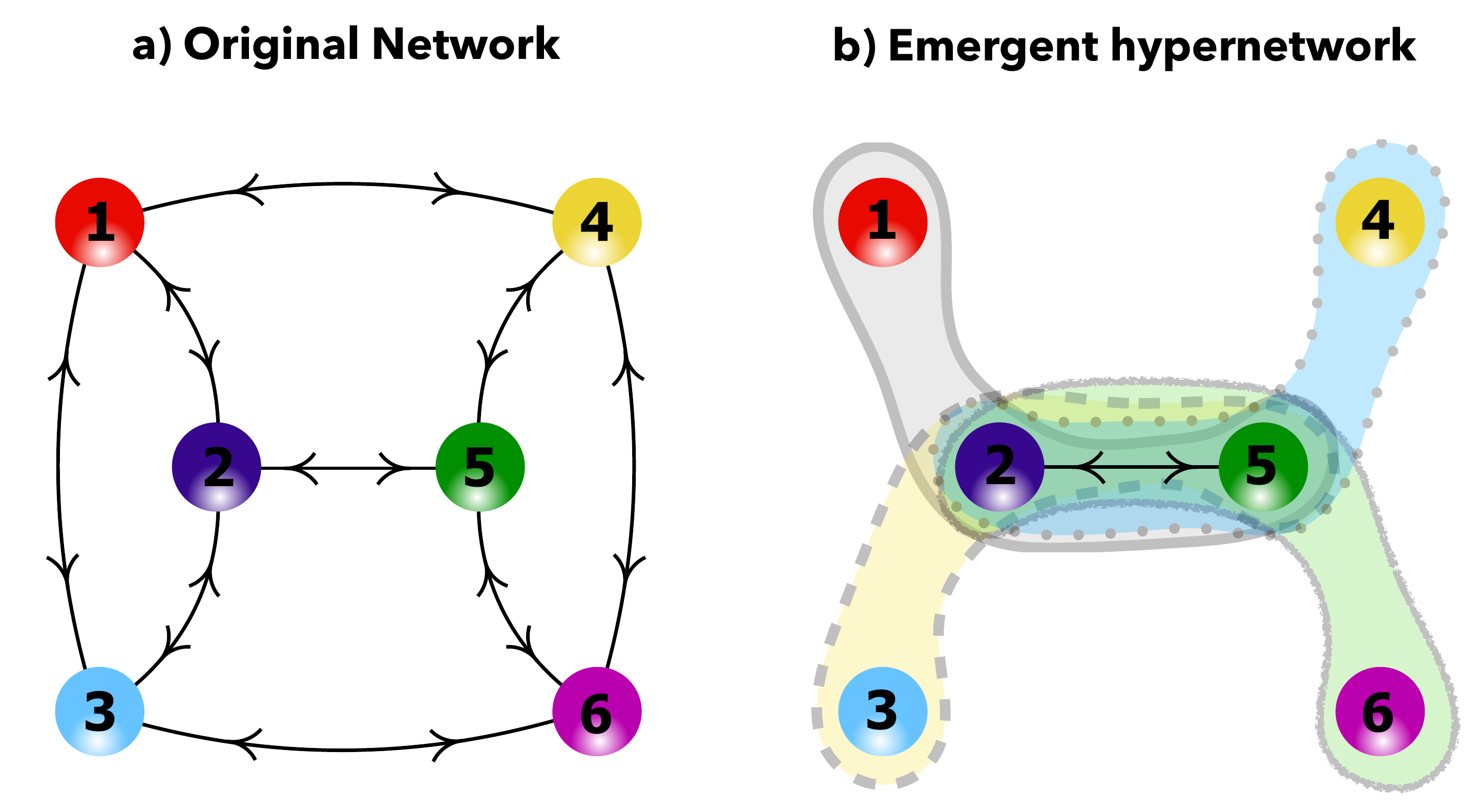}
    \caption{{\bf Emergent hypernetworks with triplet interaction.} Inset a) shows the original network. Each isolated node dynamics is close to a Hopf bifurcation. The pairwise coupling function $h(z,w) = z \bar w$. Inset b) shows the hypernetwork reconstructed from the phase dynamics of the original dynamics. The state of a given node is influenced by the triple interaction of its own state in combination of the incoming links. Our theory also predicts the emergent of such hypernetwork. The hypernetwork  emerges as a combination of the original coupling function $h(z,w) = z \bar w. $, the network, and  the resonance relations of the isolated frequencies.}
    \label{fig:networks6n}
\end{figure}

\begin{align}\label{exxx2}
\dot{z}_1 &= \gamma_1z_1 - \beta z_1|z_1|^2 + \alpha (z_1\overline{z}_2 + z_1\overline{z}_3 +  z_1\overline{z}_4)\\ \nonumber
\dot{z}_2 &= \gamma_2z_2 - \beta z_2|z_2|^2 + \alpha (z_2\overline{z}_1 + z_2\overline{z}_3 +  z_2\overline{z}_5)\\ \nonumber
\dot{z}_3 &= \gamma_3z_3 - \beta z_3|z_3|^2 + \alpha (z_3\overline{z}_1 + z_3\overline{z}_2 +  z_3\overline{z}_6)\\ \nonumber
\dot{z}_4 &= \gamma_4z_4 - \beta z_4|z_4|^2 + \alpha (z_4\overline{z}_5 + z_4\overline{z}_6 +  z_4\overline{z}_1)\\ \nonumber
\dot{z}_5 &= \gamma_5z_5 - \beta z_5|z_5|^2 + \alpha (z_5\overline{z}_4 + z_5\overline{z}_6 +  z_5\overline{z}_2)\\ \nonumber
\dot{z}_6 &= \gamma_6z_6 - \beta z_6|z_6|^2 + \alpha (z_6\overline{z}_4 + z_6\overline{z}_5 +  z_6\overline{z}_3)\, .
\end{align}
We will assume either one of
\begin{enumerate}
\item $\gamma_2 \approx \gamma_5$,  $\gamma_1 \approx \gamma_6$ and $\gamma_3 \approx \gamma_4$, with $\gamma_1 \not\approx \gamma_2$ (and hence $\gamma_1, \gamma_6 \not\approx \gamma_2, \gamma_5$), $\gamma_2 \not\approx \gamma_3$ and $\gamma_1 \not\approx \gamma_3$. 
\item $\gamma_2 \approx \gamma_5$ and $\gamma_i \not\approx \gamma_j$ for all $i,j \in \{1, \dots, 6\}$ with $i \not= j$ and $(i,j) \not= (2,5), (5,2)$.
\end{enumerate}
In each case, we may transform ODE \eqref{exxx2} into
\begin{align}\label{transform2}
\dot{v}_1 &= \gamma_1v_1 - \beta v_1|v_1|^2 - \epsilon\frac{v_1\overline{v}_2v_5}{\overline{\gamma}_2}  + \mathcal{O}(|\epsilon, v|^5)\\ \nonumber
\dot{v}_2 &= \gamma_2v_2 - \beta v_2|v_2|^2 - \epsilon\frac{v_2\overline{v}_5v_2}{\overline{\gamma}_5}  + \mathcal{O}(|\epsilon, v|^5)\\ \nonumber
\dot{v}_3 &= \gamma_3v_3 - \beta v_3|v_3|^2 - \epsilon\frac{v_3\overline{v}_2v_5}{\overline{\gamma}_2}  + \mathcal{O}(|\epsilon, v|^5)\\ \nonumber
\dot{v}_4 &= \gamma_4v_4 - \beta v_4|v_4|^2 - \epsilon\frac{v_4\overline{v}_5v_2}{\overline{\gamma}_5}  + \mathcal{O}(|\epsilon, v|^5)\\ \nonumber
\dot{v}_5 &= \gamma_5v_5 - \beta v_5|v_5|^2 - \epsilon\frac{v_5\overline{v}_2v_5}{\overline{\gamma}_2}  + \mathcal{O}(|\epsilon, v|^5)\\ \nonumber
\dot{v}_6 &= \gamma_6v_6 - \beta v_6|v_6|^2 - \epsilon\frac{v_6\overline{v}_5v_2}{\overline{\gamma}_5}  + \mathcal{O}(|\epsilon, v|^5)\, ,
\end{align}
where $\epsilon = \alpha^2$ leading to the emergent higher order network displayed in Figure \ref{fig:networks6n}b).

\section{Model recovery and normal form representation}\label{Data}
Let $x\in \mathbb{R}^m$ and consider 

\begin{eqnarray}\label{Rx}
\dot x &=& F(x)  
\end{eqnarray}

We assume for simplicity that $F: \mathbb{R}^m \rightarrow \mathbb{R}^m$ is a polynomial map and  

\begin{eqnarray}\label{Rx}
\dot x_i&=& \sum_{j=1}^k a_{i j} p_j(x),
\end{eqnarray}

\noindent
where $x_i$ is the $i$th coordinate of $x$ and $p_j$'s form a basis of homogeneous polynomials. 
Notice that in a network context, $x\in \mathbb{R}^m$ would represent the state vector of the network and $F$ would model isolated dynamics and interactions. Once a trajectory $x(t)$ and $\dot x(t)$ are known, we perform a model recovery as follows. Fix a sampling $h$ and introduce 

$$
 V = \left(
\begin{array}{cccc}
\dot x_1(0) & \dot x_2(0) & \dots & \dot x_m(0)\\
\dot x_1(h) & \dot x_2(h) & \dots & \dot x_m(h)\\
\vdots & \vdots& \ddots & \vdots \\
\dot x_1(T) & \dot x_2(T) & \dots & \dot x_m(T)\\
\end{array}
\right) \mbox{~ ~ and ~ ~} 
X = \left(
\begin{array}{cccc}
 x_1 (0) & x_2 (0)& \dots & x_m (0)\\
 x_1 (h) & x_2 (h)& \dots & x_m (h)\\
\vdots & \vdots& \ddots & \vdots \\
x_1 (T) & x_2 (T)& \dots & x_m (T)\\
\end{array}
\right)
$$

along with 

\[
\Phi(X) =
\left(
\begin{array}{cccc}
p_1(x(0)) 	   & p_2(x(0)) & \cdots & p_k(x(0)) \\
p_1(x(h)) 	   & p_2(x(h)) & \cdots & p_k(x(h)) \\
\vdots & \vdots & \ddots & \vdots \\ 
p_1(x(T)) 	   & p_2(x(T)) & \cdots & p_k(x(T)) \\
\end{array}
\right) \nonumber
\]

Let $v_i$ be the $i$th column of $V$ and $\xi_i = (a_{i1}, a_{i2}, \dots, a_{ik})^*$. Here, $^*$ denotes the transpose. Then by construction

\begin{equation}\label{fund}
\Phi (X) \xi_i = v_i
\end{equation}
\noindent
and if for large $T$ the operator $\Phi$ is full rank the solution of Eq. (\ref{fund}) is unique. 
Solving this equation for all coordinates, we recover the differential equation. 

In data, however, due to numerical round-off errors or noise Eq. (\ref{fund}) is perturbed and one seeks for solutions allowing a small error $\| \Phi (X) \xi_i - v_i \| <\varepsilon_0$ but  under a model simplification such as imposing that some coefficients of $\xi_i$ are zero, that is, looking for sparse solutions. 

The sparse model recovery of the coefficients $\xi_i$ is the problem 

$$
\min_{q \in \mathbb{R}^k} \| q \|_0 \mbox{~ ~ subjected to ~ ~} \| \Phi(X) q -  v_i \|\le\lambda
$$
\noindent
for a suitably chosen $\lambda>0$.

Now we are ready to prove the following

\begin{thr}
Consider Eq. (\ref{Rx}) and the following assumptions
\begin{itemize}
\item[](H0) Eq. (\ref{Rx}) is generic (coefficients $a_{ij}$ are non vanishing)
\item[](H1) Eq (\ref{Rx}) has a normal form
\begin{eqnarray}
\dot y &=& G(y) + R(y)  
\end{eqnarray}
where $G$ contains no non-resonant terms and  $\|R(y)\| = O(y^{d+1})$ for some large $d$. Moreover the coordinates of $y$ have the expansion 
\[
\dot y_i 
= \sum_{j=1}^k b_{ij} p_j(y) + R_i(y)
\]

\item[](H2) The trajectories $\{x(t)\}_{t=0}^T$ as well as $\{\dot x(t)\}_{t=0}^T$
are given with $T$ sufficiently large and stay in a sufficiently small neighbourhood $V_{\varepsilon}$ of the origin such that 
$$
\sup_{t\in[0,T]} \| x(t) \|_{C^1} \le \varepsilon 
$$
for initial conditions in an open neighbourhood of the origin. 

\item[](H3) The operator $\Phi$ is full rank. 
\end{itemize}
Then there exist  $\lambda = \lambda (\varepsilon,d)>0$  such the solution to the sparse recovery problem 
$$
\min_{q \in \mathbb{R}^k} \| q \|_0 \mbox{~ ~ subjected to ~ ~} \| \Phi(X) q -  v_i \|_2 \le \lambda 
$$
is the vector of coefficients of $(b_{i1},b_{i2}, \dots, b_{ik})^*$ of  the normal form of Eq. (\ref{Rx})
\end{thr}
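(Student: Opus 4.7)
The plan is to argue that (a) the normal-form coefficient vector $b^{(i)} := (b_{i1},\dots,b_{ik})^*$ is feasible for the sparse program once $\lambda$ is chosen appropriately, and (b) no strictly sparser coefficient vector achieves the same tolerance. Feasibility rests on the higher-order remainder in (H1), which is small on trajectories confined to the $\varepsilon$-neighborhood, while exclusion of sparser competitors rests on the full-rank hypothesis (H3) together with genericity (H0).

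For feasibility I would first pass to the normal-form coordinates via the near-identity polynomial change $y = x + \Psi(x)$ supplied by (H1), so that the trajectory $y(t)$ lies in a ball of radius $O(\varepsilon)$ and the remainder obeys $|R_i(y(t))| \leq C_R\,\varepsilon^{d+1}$ pointwise. Evaluating $\dot y_i(t) = \sum_j b_{ij}\, p_j(y(t)) + R_i(y(t))$ at the $N = T/h$ sample times and stacking the identities yields $\|\Phi(Y)\,b^{(i)} - v_i^{y}\|_2 \leq C\sqrt{N}\,\varepsilon^{d+1}$. Because $\Psi$ has vanishing linear part, the change-of-variable corrections between $(\Phi(X),v_i)$ and $(\Phi(Y),v_i^{y})$ are of comparable or lower order in $\varepsilon$, so setting $\lambda_0(\varepsilon,d) := C'\sqrt{N}\,\varepsilon^{d+1}$ makes $b^{(i)}$ a feasible point of the $X$-based program.

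For sparsity optimality, consider any candidate $q$ with $\|q\|_0 < \|b^{(i)}\|_0$: it must annihilate at least one monomial $p_{j^*}$ retained by the normal form, and by (H0) combined with the structure of (H1) the corresponding $b_{ij^*}$ is of order one, independent of $\varepsilon$. Hypothesis (H3) forces the column of $\Phi(X)$ indexed by $p_{j^*}$ to be linearly independent of the other active columns along the trajectory, with $\ell_2$-norm of order $\sqrt{N}\,\varepsilon^{\deg p_{j^*}}$ and $\deg p_{j^*} \leq d$. Consequently $\|\Phi(X)q - v_i\|_2 \geq c\,|b_{ij^*}|\sqrt{N}\,\varepsilon^{d}$, and any $\lambda$ strictly between $\lambda_0 \sim \varepsilon^{d+1}$ and $c\,\varepsilon^{d}$ excludes $q$ while keeping $b^{(i)}$ feasible.

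The hard part will be turning the separation $\lambda_0 \ll \lambda_1$ into a uniform statement across all supports of cardinality less than $\|b^{(i)}\|_0$ and across all coordinates $i$. This requires a quantitative $\ell_0$-recovery estimate extracted from (H3) in the form of a uniform lower singular-value bound on admissible submatrices of $\Phi(X)$, which combined with the degree gap $d+1$ versus $d$ built into the normal form of (H1) produces the allowed window of $\lambda(\varepsilon,d)$ asserted by the theorem.
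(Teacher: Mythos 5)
The central gap is your feasibility threshold. The data live in the original coordinates $x$, while the normal-form coefficients $b^{(i)}$ describe the dynamics only after the near-identity change $x = y + Q_1(y)$ with $Q_1(y)=O(\|y\|^2)$. Translating back to the $X$-based program, the residual $\Phi(X)b^{(i)} - v_i$ contains, besides the true remainder $R_i(y)=O(\varepsilon^{d+1})$, the mismatch terms $[\Phi(X)-\Phi(Y)]\,b^{(i)}$ and the transport term $DQ_1(y)\,\dot y$ coming from $\dot x_i = \dot y_i + \tfrac{d}{dt}Q_{1,i}(y)$; both are of order $\varepsilon^{2}$ per sample, not $\varepsilon^{d+1}$. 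So your assertion that the change-of-variable corrections are ``of comparable or lower order'' than the remainder fails for large $d$, and with $\lambda_0 \sim \sqrt{N}\,\varepsilon^{d+1}$ the vector $b^{(i)}$ is generically \emph{not} feasible. This is precisely where the paper's proof does its work: Step 1 bounds $\|\Phi(X)-\Phi(Y)\|_2 \le L\varepsilon^2$, $\|z_i\|_2\le C\varepsilon^2$ (the $DQ_1\dot y$ contribution) and $\|\rho_i\|_2$, and Step 2 then takes a tolerance of order $\varepsilon^{2}$, not $\varepsilon^{d+1}$.

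With the tolerance corrected to order $\varepsilon^{2}$, your exclusion step no longer closes: dropping a retained resonant monomial of degree $3\le \deg p_{j^*}\le d$ (for example the cubic Hopf term) gives, by your own estimate, a residual increment of order $\sqrt{N}\,\varepsilon^{\deg p_{j^*}}$, which can lie \emph{below} the tolerance, so the window ``$\lambda$ strictly between $\varepsilon^{d+1}$ and $c\,\varepsilon^{d}$'' contains no usable value and no purely metric separation of this type identifies the support. In addition, as you concede in your last paragraph, (H3) only asserts full rank; the uniform lower singular-value bound over all admissible submatrices that your argument requires is an extra hypothesis, not a consequence. The paper sidesteps both difficulties by arguing structurally rather than quantitatively in Step 3: if some $\eta$ with $\|\eta\|_0 < \sigma_i$ were feasible, full rank of $\Phi(X)$ would produce an alternative representation $\dot x = \hat G(x) + \hat R(x)$ with fewer monomials than the normal form, contradicting (H1) (the normal form retains only resonant terms) together with the genericity assumption (H0). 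To repair your route you would need both the correct $\varepsilon^{2}$ feasibility scale and a replacement of the singular-value exclusion by an argument of this normal-form-minimality type.
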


\noindent
\begin{proof} 
We break the arguments into three steps:

{\it Step1: Approximations and Uniqueness solutions.} By normal form theory there are functions $Q_1$ and $Q_2$ such that 
\begin{equation}\label{Pnf}
x  = y + Q_1(y) \mbox{~~ and ~~} y = x+ Q_2(x)
\end{equation}
where $\| Q_1(y) \| = O(\|y\|^{2})$ and $\| Q_2(x) \| = O(\|x\|^2)$. 
Given a trajectory $y(t)$ we construct the matrix $Y$ in the same manner as $X$ and consider 
$$
 u_i = \left(
\begin{array}{c}
\dot y_i(0) \\
\dot y_i(h) \\
\vdots \\
\dot y_i (T)
\end{array}
\right) \mbox{~ and ~ }
 \rho_i (Y) = \left(
\begin{array}{c}
 R_i(y(0)) \\
 R_i(y(h)) \\
\vdots \\
 R_i(y(T))
\end{array}
\right)
$$ 
As the basis is formed by homogeneous polynomials, using Eq. (\ref{Pnf}) we conclude that there is $L$ such that 
$$
\| \Phi(X) - \Phi(Y)  \|_2 \le L \varepsilon^{2}
$$
  
By $(H3)$  $\Phi(X)$ is full rank and  for  $\varepsilon^{2}$ small enough, we conclude that $\Phi(Y)$ is also full rank since the rank is lower semicontinuous.  Next notice that the equation 
\begin{equation}\label{Y}
\Phi(Y) \zeta_i + \rho_i(Y) =  u_i
\end{equation}
also has a solution $\zeta_i = (b_{i1}, b_{i2}, \dots, b_{ik})^*$ by construction and it is unique since $\Phi(Y)$ is full rank. Furthermore, in $V_{\varepsilon}$ there is a constant $M$ such that 
$$
\| \rho_i(Y)\|_2 \le M \varepsilon^{2} ,\,\,\,\, \forall i \in \{1,\dots, m\}
$$

Using Eq. (\ref{Pnf})  we obtain  
\begin{eqnarray}\label{uv}
v_i =  u_i + z_i
\end{eqnarray}
where $z_i$ corresponds to terms as  $DP(y) \dot y$.  By $(H2)$ trajectories stay in the neighbourhood $V_{\varepsilon}$, thus,  there is a constant $C$ such that 
$$
\| z_i \|_2 \le C \varepsilon^{2},\,\,\,\, \forall i \in \{1,\dots, m\}
$$

{\it Step 2: A sparse solution.} Consider the unique solution $\zeta_i$ of Eq. (\ref{Y}) and let  $\sigma_i = \|\zeta_i \|_0$. Consider the set
$$
B_{\lambda, \sigma_i} = \{ q \in \mathbb{R}^k: \| q\|_0 \le \sigma_i \mbox{~and ~} 
\| \Phi(X)q - v_i \|_2 \le \lambda \}
$$
Now we claim that if $\lambda:= (L \| \zeta_i\|_2 + C+ M) \varepsilon^{d}$ then $\zeta_i \in B_{\lambda, \sigma_i}$. Indeed, consider 
\begin{eqnarray}
\| \Phi(X) \zeta_i - v_i \|_2 &=& \| \Phi(X) \zeta_i + \Phi(Y) \zeta_i - \Phi(Y) \zeta_i- v_i \|_2 \nonumber \\
&=& \| [ \Phi(X) - \Phi(Y) ] \zeta_i + \Phi(Y) \zeta_i- u_i - z_i +\rho_i - \rho_i \|_2 \nonumber \\
&=& \| [ \Phi(X) - \Phi(Y) ] \zeta_i- z_i - \rho_i \|_2 \nonumber \\
&\le & \| \Phi(X) - \Phi(Y) \|_2 \| \zeta_i \|_2 +  \|z_i\|_2 +  \|\rho_i \|_2 \nonumber \\
&\le& (L  \| \zeta_i \|_2 + M + C) \varepsilon^2 \nonumber
\end{eqnarray}

{\it Step 3: Uniqueness.}
Assume that there is $\eta \in B_{\lambda, \sigma_i}$ with $\| \eta\|_0 < \sigma_i$. Since $\Phi(X)$ is full rank, this implies that there is 
$\hat R$ such that $\| \hat R(x) \|_2 \le K \| x\|_2^{d+1}$ for some $K$ and $\dot x = \hat G(x) + \hat R(x)$. Thus, $\hat G$ has fewer coefficients than $G$, implying that either $G$ must have a non-resonant term or $(H0)$ was violated.  This contracts $(H1)$ and completes the proof. \\
\end{proof}

\begin{remk} Assumption $H2$ is natural in our context. Notice since the isolated system has a  limit cycle near the origin. Thus,  an open set of initial conditions is attracted to the cycles and stays for all times near the origin where we control the norm of solutions \cite{Stankovski_RMP_2017}. When coupling such dynamics to a network this behaviour persists. \\
\end{remk}

\begin{remk}
Assumption $H3$ is in general not restrictive. If solutions of $\dot x=F(x)$ are not degenerated such as all solutions converge to fixed points, then typically $\Phi(X)$ is full rank. In fact, if solutions converge to an attractor, we can adapt the basis to the dynamics such that in the adapted basis $\Phi^*(X) \Phi(X)$  is close to identity for large $T$ \cite{PhysicaD}. This implies that $\Phi(X)$ is close to orthogonal.  \\
\end{remk}

\begin{remk}
Another interesting case is when 
$$
\dot x = F(x) + U(t,x)
$$
and $U$ has fast oscillations. This happens typically in phase dynamics when we subtract the trends of linear frequencies. For example, consider 
\begin{eqnarray}
\dot \theta &=& 1  + \epsilon \sin(\theta - \phi ) + \epsilon \cos( \theta +\phi) \\
\dot \phi &=& 1+\delta  + \epsilon \sin(\phi-\theta ) + \epsilon \cos( \theta +\phi) 
\end{eqnarray}
where $\delta\ll1$. 
Subtracting the trend $\vartheta = \theta - t$ and $\varphi = \phi -t$ leads to 
\begin{eqnarray}
\dot \vartheta &=&  \epsilon \sin(\vartheta - \varphi ) + \epsilon \cos(\vartheta +\varphi - 2t ) \\
\dot \varphi &=& \delta  + \epsilon \sin(\varphi-\vartheta ) + \epsilon \cos(\vartheta +\varphi - 2 t) 
\end{eqnarray}
Since $\dot \varphi $ and $\dot \vartheta$ are $O(\delta)$ by the averaging Theorem, fast oscillating terms containing $\cos$ are averaged out and can be neglected in a time scale as $1/\delta$. Thus, also in this case when performing a model recovery with finite amount of data the function $U$ cannot be recovered. This also happens for our examples in the main text. Thus, sparsity and fast oscillations can contribute to the impossibility of recovering the original model. 
\end{remk}
 
\newpage

{

\section{Emergent hypernetworks in an  integrate-and-fire model}

\textbf{Integrate and fire model.} We used an autocatalytic integrate-and-fire model \cite{Kori2018} to simulate the
 behavior of four oscillators in a ring configuration with state variable $v_k$, and a parameter for each oscillator
  $p_k$ that determines whether the variable is increasing or decreasing. In the model, we introduce nonlinear 
  time-delayed coupling, and the oscillators are governed by the equations

\begin{equation}
\label{IF-equ}
\frac{dv_k}{dt} = \frac{p_{k}v_{k} - (1-p_{k})v_{k}B}{F_{k}} +p_{k}K \sum_{\l=1}^4 A_{k,l}(\tilde{v}_k+\tilde{v}_k^2)\tilde{v}_l(t-\tau)
\end{equation}
where $F_k$ is a rescaling factor that affects the natural frequency of $k$th oscillator, $K$ is the coupling strength, $\tilde{v}_k$ is the signal corrected for offset ($\tilde{v}_k=v_k-0.626$), $A_{k,l}$ is the adjacency matrix, and $\tau$ is the time delay.

When the variable $v_k$ reaches 1 from below, then $p_k$ is smoothly set to 0, and $v_k$ decreases. 
Similarly, when the variable $v_k$ reaches from above $A$, $p_k$ is set to 1, and the variable starts to increase.
We selected the threshold parameter $A=0.36$ and the timescale parameter $B=3.333$ so that only the one-cluster
 is stable with positive coupling. 
 Then we adjusted the parameter $F_k$ ($F_1$=4.950, $F_2$=1.955, $F_3$=3.177, $F_4$=1.970) of each oscillator to have a frequency ratio with respect oscillator 1 as $\omega_2/\omega_1 \approx 2.5$, $\omega_3/\omega_1 \approx 1.5$  and $\omega_4/\omega_1$$\approx 2.5$. Note that $F_k$ only affects the local dynamics of oscillator $k$ and not the coupling term. Figure ~\ref{fig:sim_iaf}~(a) shows the time series of the variable $v_k$ for $K$=0.234 and $\tau$=1.65 s. 
\\
\begin{figure}[!ht]
    \centering
    \includegraphics[width=1\columnwidth]{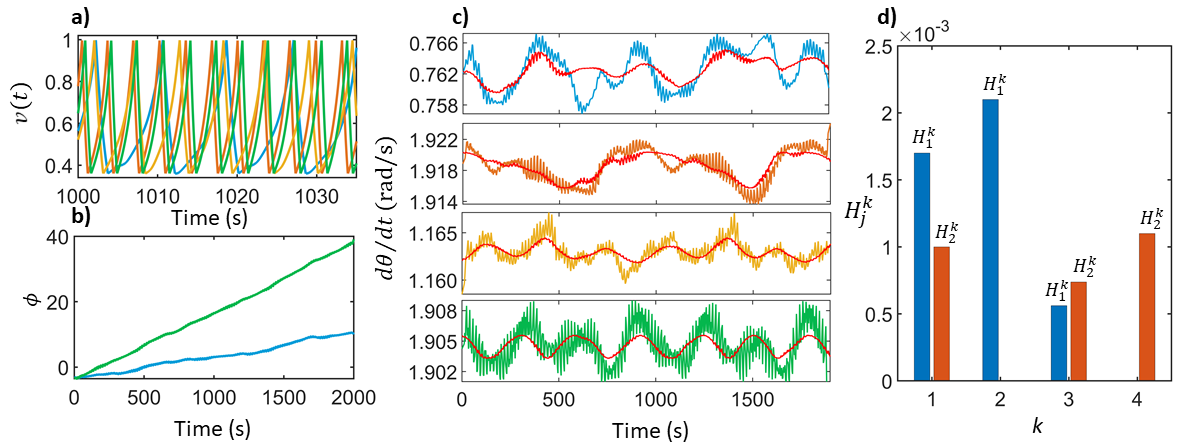}
    \caption{{\bf Simulations and network reconstruction with the integrate-and-fire model}  a) Time series of the $v_k$ variable. The blue, orange, yellow, and green lines correspond to oscillators one to four.  b) Time series of the slow phases, $\phi_1$ (blue) and $\phi_2$ (green) with coupling and delay. c) The instantaneous frequency and the 
    phase model fitted values (red) for oscillators 1 to 4 (corresponding from top to bottom). d) Coupling amplitudes for the four oscillators ($k$) from hypernetworks one (blue) and 2 (red).}
    \label{fig:sim_iaf}
\end{figure}

\textbf{Fitting of phase dynamics.} Similar to the experiments, we extract the phase of each oscillator using the peak-finding approach \cite{pikovsky2003synchronization} from the time series of the variable $v_k$. When there is coupling and delay, the triplet phase differences,  $\phi_j$, $j$=1, 2, show a phase slip behavior Supplementary Fig.~\ref{fig:sim_iaf}~(b). As described in the main text we used LASSO to fit the $\dot{\theta}_k$ values accoriding to equation ~(\ref{eq:exp}) with drifting in the natural frequencies. The time series of the $v_k$ variable (see Supplementary Fig. ~\ref{fig:sim_iaf}~(a)) showed a more nonlinear wave form, and we fitted the amplitudes of  sin and cos until the seond order harmonics ($C_{j,2}^k$ and $D_{j, 2}^k$). The $\dot{\theta}_k$ was filtered by a first order Savitzky-Golay filter for 125 s. Supplementary Fig.~\ref{fig:sim_iaf} ~(c) shows the corresponding fits for oscillator 1 to 4. (In the LASSO 
fit,  we used a regularization parameter that represented an error 40\% higher than the best fit). The fitted parameters are shown in Supplementary Fig. \ref{fig:coeff_tab}. 

\begin{figure}[!ht]
    \centering
    \includegraphics[width=0.6\columnwidth]{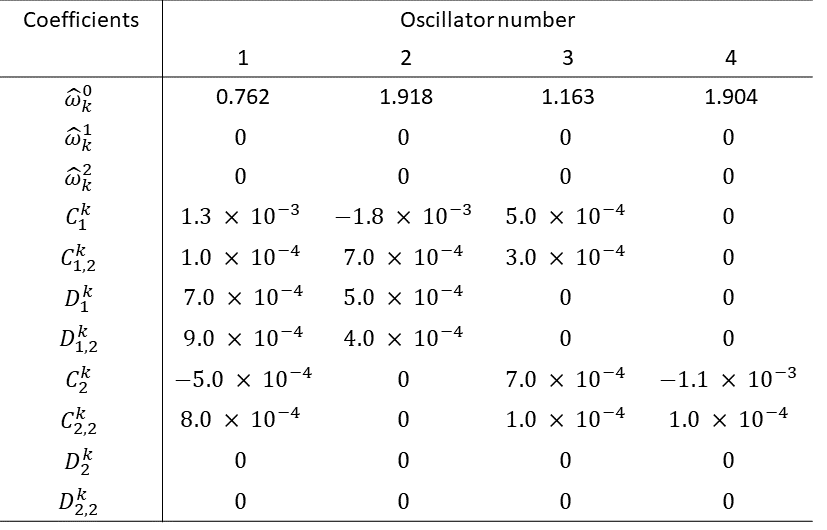}
    \caption{{\bf Hypernetwork fitting coefficients for phase dynamics of the integrate-and-fire model. }  }
    \label{fig:coeff_tab}
\end{figure}

The strength of the triplet interactions on oscillator $k$ is given by the amplitudes of the first and second harmonics
($H_j^k$); the amplitudes are shown in Supplementary Fig. \ref{fig:sim_iaf}d. In agreement with the experiments, the dynamics of oscillators 1 and 3 are impacted by both triplet interactions $\phi_1$ and $\phi_2$. For oscillator 1, the amplitudes are 1.7 $\times 10^{-3}$ and 1.0 $\times 10^{-3}$, and for oscillators 3 the amplitudes are
5.6 $\times 10^{-4}$ and 7.4 $\times 10^{-4}$ respectively. However, the dynamics of oscillator 2 and 4 are only impacted by $\phi_1$ (2.1 $\times 10^{-3}$) and $\phi_2$ (1.1 $\times 10^{-3}$). 

We conclude that in an integrate-and-fire model, the phase dynamics of the oscillators coupled in a ring can be described by an emergent hypernetwork.

\section{Mean field interaction}

We consider the system
\begin{align}\label{theODE00pluslinsasd}
\dot{z}_k &= \gamma_kz_k - \beta_k z_k|z_k|^2 + \alpha \sum_{\ell =1}^n A_{k\ell}(z_{\ell} + \bar{z}_{\ell}z_k^2)\, ,
\end{align}
where $A$ is the 4-ring network, with nodes labelled $1$ through $4$ along the ring. These frequencies satisfy the resonance conditions $\omega_1 + \omega_3 \approx 2\omega_2$ and $\omega_2 + \omega_4 \approx 2\omega_1$. A priori, it is unclear what the behavior of the system \eqref{theODE00pluslinsasd} will look like. To elucidate this, we conjugate the system by a transformation designed to get rid of the third order coupling terms in $\alpha$. To this end, we define new coordinates
 \begin{equation}
w_k = z_k - \alpha \sum_{\ell =1}^n \frac{A_{k\ell}}{\gamma_k + \bar{\gamma}_{\ell}}z_k^2\bar{z}_{\ell} \, .
 \end{equation}
 This causes new terms in $\alpha$ to appear, related to the $\beta_k z_k|z_k|^2$ terms in Equation \eqref{theODE00pluslinsasd}. We therefore perform another coordinate transformation
  \begin{equation}
u_k = w_k - \alpha Q_k(w) \, ,
 \end{equation}
 for some suitably chosen polynomials $Q_k$. We get equations for $\dot{u}_k$, which involve, among others, combinations of the linear and non-linear terms in the coupling 
 \begin{equation}
  \alpha \sum_{\ell =1}^n A_{k \ell}(z_{\ell} + \bar{z}_{\ell}z_k^2)\, .
 \end{equation}
We then discard non-resonant terms in $\alpha^2$, which leaves the equations 
\begin{align}\label{RedZ}
\dot{u}_1 &= \gamma_1u_1 - \beta_1 u_1|u_1|^2 + \alpha(u_2 + u_4) + \alpha^2\frac{u_{2}^2\bar{u}_{3}}{\gamma_{2} + \bar{\gamma}_3} + \text{h.o.t.}\\ \nonumber
\dot{u}_2 &= \gamma_2u_2 - \beta_2 u_2|u_2|^2 + \alpha(u_1 + u_3) + \alpha^2\frac{u_{1}^2\bar{u}_{4}}{\gamma_{1} + \bar{\gamma}_4} + \text{h.o.t.}\\ \nonumber
\dot{u}_3 &= \gamma_3u_3 - \beta_3 u_3|u_3|^2 + \alpha(u_2 + u_4) + \alpha^2\frac{u_{2}^2\bar{u}_{1}}{\gamma_{2} + \bar{\gamma}_1} + \text{h.o.t.}\\ \nonumber
\dot{u}_4 &= \gamma_4u_4 - \beta_4 u_4|u_4|^2 + \alpha(u_1 + u_3) + \alpha^2\frac{u_{1}^2\bar{u}_{2}}{\gamma_{1} + \bar{\gamma}_2}+ \text{h.o.t.}\, .
\end{align}
See Section \ref{Normal_form_calculations} for more details on these normal form calculations. If we ignore the (non-resonant) terms  $\alpha(u_2 + u_4)$ and $\alpha(u_1 + u_3)$ in Equation \eqref{RedZ}, then averaging yields the emergent phase dynamics. To this end, we set $\varphi_1 := \phi_1-2\phi_2+\phi_3$ and $\varphi_2 := \phi_2-2\phi_1+\phi_4$ for the slow phases. The different monomials in Equation \eqref{RedZ} then yield terms in the phase equations according to:
\begin{itemize}
\item for node 1,  $\frac{u_{2}^2\bar{u}_{3}}{\gamma_{2} + \bar{\gamma}_3}$ gives terms involving sin/cos of $\varphi_1$;
\item for node 2,  $\frac{u_{1}^2\bar{u}_{4}}{\gamma_{1} + \bar{\gamma}_4}$ gives terms involving sin/cos of $\varphi_2$;
\item for node 3,  $\frac{u_{2}^2\bar{u}_{1}}{\gamma_{2} + \bar{\gamma}_1}$ gives terms involving sin/cos of $\varphi_1$;
\item for node 4,  $\frac{u_{1}^2\bar{u}_{2}}{\gamma_{1} + \bar{\gamma}_2}$ gives terms involving sin/cos of $\varphi_2$.
\end{itemize}

\subsection{Frequency shifts}

The linear terms $\alpha(u_2 + u_4)$ and $\alpha(u_1 + u_3)$ nevertheless have an effect on the emergent dynamics, in the following way. Whereas the natural frequencies of the uncoupled system (i.e. for $\alpha = 0$) are given by $\omega_1, \dots, \omega_4$, they are in general given by the imaginary part of the eigenvalues of the perturbed matrix 
$$U = \lambda I + i\Omega + \alpha A$$ 
where, $\Omega$ is the diagonal matrix with entries $\omega_1, \dots, \omega_4$, and $A$ is the adjacency matrix of the network. Eigenvalue perturbation then gives augmented frequencies of the form $\omega_k + O(\alpha^2)$. Note that the frequency perturbation is again of order $\alpha^2$. This can be explained by a linear transformation bringing the perturbed system $\lambda I + i\Omega + \alpha A$ to that of the form $\lambda I + i\Omega + \alpha^2 B$, similar to our techniques for non-linear terms.

Therefore, whenever $\alpha>0$ the frequencies will shift providing a frequency mismatch between the slow phases 
\begin{align}
\varphi_1 = \theta_1 -2 \theta_2  +\theta_3\\
\varphi_2 = \theta_2 -2 \theta_1 + \theta_4
\end{align}
namely, they will be modelled as 
\begin{align}\label{eq:mf_fitting}
\dot{\varphi}_{1,2} = \varepsilon_{1,2} + G_{1,2}(\varphi_1) +H_{1,2}(\varphi_2).
\end{align}
where $\varepsilon_{1,2}=O(\alpha^2)$. 
\subsection{Model Recovery}
We integrate Eq.~(\ref{theODE00pluslinsasd}) with  $\Omega_1 = 2, \Omega_2 = 3, \Omega_3 = 4$ and $\Omega_4 = 1$, by employing a wrapper of ODEPACK routine. Numerical integration was performed for 25000s with 0.01 time step. We discard the first 5000s points as transient.

We apply sparse regression using PySINDy Python package \cite{kaptanoglu2022} with the Lasso optimizer on the phases $\theta_i$ considering the slow phases $\varphi_{1,2}$ with a penalty term $\lambda = 5\times10^{-3}$, we obtain

\begin{align}
\dot{\theta}_1 &=2.001 + 0.018 \cos(\varphi_1) \\
\dot{\theta}_2 &=2.999 - 0.015 \cos(\varphi_2) \\
\dot{\theta}_3 &=3.992 - 0.011 \cos(\varphi_1) \\
\dot{\theta}_4 &=1.008 + 0.011 \cos(\varphi_2). 
\end{align}

To recover the slow phase dynamics of $\varphi_1$ and $\varphi_2$  we apply the Lasso method  with a penalty term $\lambda = 10^{-5}$ after applying a rolling window averaging process using window size of 100s to smooth the fast oscillations to have better fit on slow phases. The obtained equation after the Lasso approach reads as
\begin{align}\label{eq:3nodes_recon}
\dot{\varphi_1} &= -0.008 + 0.002 \sin(\varphi_1) + 0.001 \cos(\varphi_2) \\ 
\dot{\varphi_2} &= \,\,\,\,\, 0.008 - 0.001 \cos(\varphi_1) + 0.002 \sin(\varphi_2). 
\end{align}
The theory and the fitting are also in a perfect agreement for this mean-field case.

\subsection{Normal Form Calculations}\label{Normal_form_calculations}
Here we consider the case where we have both (non-resonant) linear coupling terms as well as higher order ones. More precisely, we consider the system
\begin{align}\label{theODE00pluslin}
\dot{z}_k &= \gamma_kz_k - \beta_k z_k|z_k|^2 + \alpha \sum_{\ell =1}^n c_{k,\ell}z_{\ell} + \alpha H_k(z) \, ,
\end{align}
where $H_k$ has only terms of degree $3$ and higher. Later, we will set $H_k(z) =  \sum_{\ell =1}^n c_{k,\ell}z_k^2\bar{z_{\ell}}$. We assume that corresponding functions $P_k$ exist that solve
\begin{align}\label{theODE00pluslin1}
\Gamma P_k - \gamma_k P_k = H_k\, .
\end{align}
In particular, when $H_k(z) =  \sum_{\ell =1}^n c_{k,\ell}z_k^2\bar{z_{\ell}}$ we assume that $\omega_k \not= \omega_{\ell}$ whenever $c_{k,\ell} \not= 0$, so that we may define
\begin{align}\label{theODE00pluslin2}
P_k(z) = \sum_{\ell =1}^n \frac{c_{k,\ell}}{\gamma_k + \bar{\gamma}_{\ell}}z_k^2\bar{z}_{\ell} \, .
\end{align}
With slight abuse of notation, the sum in Equation \eqref{theODE00pluslin2} is taken over all $\ell$ such that $c_{k,\ell} \not= 0$. As before, we consider the coordinate transformation $w_k = z_k - \alpha P_k(z)$, which gives 
\begin{equation}\label{expresssss122}
z_k = w_k + \alpha P_k(w) + \alpha^2 [P_k||P](w) + \mathcal{O}(|\alpha|^3|w|^7)\, ,
\end{equation}
by lemmas \ref{lemzinw0} and \ref{fullinverse0}. A calculation as before reveals that 
\begin{align}
\dot{w}_k &= \gamma_kw_k - \beta_k w_k|w_k|^2 + \alpha \sum_{\ell =1}^n c_{k,\ell}w_{\ell} + \alpha (\gamma_kP_k(w) - \Gamma P_k(w) + H_k(w)) \\ \nonumber
&+  \alpha(L^1_k(w) - L^2_k(w)) + \alpha^2[\gamma_kP_k - \Gamma P_k + H_k || P](w) \\ \nonumber
&+\alpha^2\sum_{\ell =1}^n c_{k,\ell}P_{\ell}(w) - \alpha^2[P_k|| (\dots, \sum_{\ell =1}^n c_{j,\ell}w_{\ell}, \dots )](w) - \alpha^2[P_k || H](w) \\ \nonumber
&+ \mathcal{O}(|\alpha|^2|w|^7 + |\alpha|^3|w|^5) \\ \nonumber
&= \gamma_kw_k - \beta_k w_k|w_k|^2 + \alpha \sum_{\ell =1}^n c_{k,\ell}w_{\ell} +  \alpha(L^1_k(w) - L^2_k(w))  \\ \nonumber
&+\alpha^2\sum_{\ell =1}^n c_{k,\ell}P_{\ell}(w) - \alpha^2[P_k|| (\dots, \sum_{\ell =1}^n c_{j,\ell}w_{\ell}, \dots )](w) - \alpha^2[P_k || H](w) \\ \nonumber
&+ \mathcal{O}(|\alpha|^2|w|^7 + |\alpha|^3|w|^5) 
\end{align}
where in the last step we have used Equation \eqref{theODE00pluslin1}, and where we again set
\begin{align}
L^1_k(w) &:=  [P_k|| (\dots, \beta w_j|w_j|^2, \dots)](w) \text{ and }\\ \nonumber
L^2_k(w) &:=  [\beta_k w_k |w_k|^2|| P](w)\, .
\end{align}
Next, we perform a second transformation $u_k = w_k - \alpha Q_k(w)$, where $Q_k$ solves 
\begin{align}\label{theODE00pluslin12w}
\Gamma Q_k - \gamma_k Q_k = L^1_k - L^2_k\, ,
\end{align}
and where
\begin{equation}\label{invssofw}
    w_k = u_k + \alpha Q_k(u) + \mathcal{O}(|\alpha|^2|u|^9)\, .
\end{equation}
This gives
\begin{align}
\dot{u}_k &= \gamma_ku_k - \beta_k u_k|u_k|^2 + \alpha \sum_{\ell =1}^n c_{k,\ell}u_{\ell} + \alpha (\gamma_kQ_k(u) - \Gamma Q_k(u) + L^1_k(u) - L^2_k(u)) \\ \nonumber
&+  \alpha^2 \sum_{\ell =1}^n c_{k,\ell}Q_{\ell}(u) - \alpha^2[Q_k|| (\dots, \sum_{\ell =1}^n c_{j,\ell}u_{\ell}, \dots )](u)\\ \nonumber
&+\alpha^2\sum_{\ell =1}^n c_{k,\ell}P_{\ell}(u) - \alpha^2[P_k|| (\dots, \sum_{\ell =1}^n c_{j,\ell}w_{\ell}, \dots )](u) - \alpha^2[P_k || H](u) \\ \nonumber
&+ \mathcal{O}(|\alpha||u|^7 + |\alpha|^3|u|^5) \, .
\end{align}
By Equation \eqref{theODE00pluslin12w} we therefore get
\begin{align}\label{lastofabstra}
\dot{u}_k &= \gamma_ku_k - \beta_k u_k|u_k|^2 + \alpha \sum_{\ell =1}^n c_{k,\ell}u_{\ell}  \\ \nonumber
&+  \alpha^2 \sum_{\ell =1}^n c_{k,\ell}Q_{\ell}(u) - \alpha^2[Q_k|| (\dots, \sum_{\ell =1}^n c_{j,\ell}u_{\ell}, \dots )](u)\\ \nonumber
&+\alpha^2\sum_{\ell =1}^n c_{k,\ell}P_{\ell}(u) - \alpha^2[P_k|| (\dots, \sum_{\ell =1}^n c_{j,\ell}w_{\ell}, \dots )](u) - \alpha^2[P_k || H](u) \\ \nonumber
&+ \mathcal{O}(|\alpha||u|^7 + |\alpha|^3|u|^5) \, .
\end{align}
We now return to the special case of  $H_k(z) =  \sum_{\ell =1}^n c_{k,\ell}z_k^2\bar{z_{\ell}}$. By assumption, the terms in $\alpha$ are resonant. Of the terms in $\alpha^2$, only 
\begin{align}\label{lastofabstra2}
\alpha^2\sum_{\ell =1}^n c_{k,\ell}P_{\ell}(u) - \alpha^2[P_k|| (\dots, \sum_{\ell =1}^n c_{j,\ell}w_{\ell}, \dots )](u)
\end{align}
is third order in $u$ (as opposed to fifth order). A direct calculation shows that
\begin{align}\label{lastofabstra3}
&\sum_{\ell =1}^n c_{k,\ell}P_{\ell}(u) - [P_k|| (\dots, \sum_{\ell =1}^n c_{j,\ell}w_{\ell}, \dots )](u) \\ \nonumber
&= \sum_{\ell =1}^n \sum_{p =1}^n \frac{c_{k,\ell} c_{\ell,p}}{\gamma_{\ell} + \bar{\gamma}_p}u_{\ell}^2\bar{u}_{p} - \sum_{\ell =1}^n \sum_{p =1}^n \frac{c_{k,\ell} c_{\ell,p}}{\gamma_{k} + \bar{\gamma}_{\ell}}u_{k}^2\bar{u}_{p} - 2\sum_{\ell =1}^n \sum_{p =1}^n \frac{c_{k,\ell} c_{k,p}}{\gamma_{k} + \bar{\gamma}_{\ell}}u_{k}\bar{u}_{\ell}u_{p}  \, .
\end{align}
From these the resonant terms can be selected, which leads to a hypernetwork description of the dynamics.

}


\end{document}